\documentclass[aop]{imsart}
\usepackage[utf8x]{inputenc}
\usepackage{ucs}
\usepackage{amsmath}
\usepackage{amsfonts}
\usepackage{amssymb}
\usepackage{enumitem}
\usepackage{cancel}
\usepackage{amsbsy}
\usepackage{genyoungtabtikz}
\usepackage{stmaryrd}

\usepackage{abraces}
\usepackage{amscd} 
\usepackage{amsgen} 
\usepackage{extdash}
\usepackage{amsopn} 
\usepackage{amstext}
\usepackage{amsthm} 
\usepackage{amsxtra}
\usepackage{hyperref}
\usepackage[capitalise,noabbrev]{cleveref}
\usepackage{rotating}
\usepackage{tikz}
\newtheorem{thm}{Theorem}[section]
\newtheorem{lem}[thm]{Lemma}
\newtheorem{prop}[thm]{Proposition}
\newtheorem{defn}[thm]{Definition}
\newtheorem{cor}[thm]{Corollary}

\newtheorem*{lem*}{Lemma}
\newtheorem*{prop*}{Proposition}
\newtheorem*{thm*}{Theorem}
\newtheorem*{defn*}{Definition}
\newtheorem*{cor*}{Corollary}

\DeclareMathOperator{\Id}{Id}

\DeclareMathOperator{\Leb}{Leb}

\DeclareMathOperator{\law}{law}

\DeclareMathOperator{\Res}{Res}
\begin{document}
\begin{frontmatter}

\title{Zigzag diagrams and Martin boundary}
\runtitle{Zigzag diagrams and Martin Boundary}

\begin{aug}
  \author{\fnms{Pierre}  \snm{ Tarrago}\corref{}\thanksref{t2}\ead[label=e1]{pierre.tarrago@cimat.mx}} 

  \thankstext{t2}{Supported by the Deutsch-Franz\"{o}siche Hochschule}

  \runauthor{P. Tarrago}

  \affiliation{Centro de Investigación en Matemáticas (CIMAT)}

  \address{De Jalisco 8A, Valenciana, 36240 Guanajuato, Gto.,\\ 
          \printead{e1}}

\end{aug}

\begin{abstract}
\indent We investigate the asymptotic behavior of random paths on a graded graph which describes the subword order for words in two letters. This graph, denoted by $\mathcal{Z}$, has been introduced by Viennot, who also discovered a remarkable bijection between paths on $\mathcal{Z}$ and sequences of permutations. Later on, Gnedin and Olshanski used this bijection to describe the set of Gibbs measures on this graph. Both authors also conjectured that the Martin boundary of $\mathcal{Z}$ should coincide with its minimal boundary. We give here a proof of this conjecture by describing the distribution of a large random path conditioned on having a prescribed endpoint. We also relate paths on the graph $\mathcal{Z}$ with paths on the Young lattice, and we finally give a central limit theorem for the Plancherel measure on the set of paths in $\mathcal{Z}$.
\end{abstract}

\begin{keyword}[class=MSC]
\kwd[Primary ]{60C05}
\kwd{60J45}
\kwd[; secondary ]{05E99}
\end{keyword}

\begin{keyword}
\kwd{Descent set of a permutation}
\kwd{Martin boundary}
\kwd{Compositions}
\end{keyword}

\end{frontmatter}
\begin{abstract}
\indent We investigate the asymptotic behavior of random paths on a graded graph which describes the subword order for words in two letters. This graph, denoted by $\mathcal{Z}$, has been introduced by Viennot, who also discovered a remarkable bijection between paths on $\mathcal{Z}$ and sequences of permutations. Later on, Gnedin and Olshanski used this bijection to describe the set of Gibbs measures on the graph. Both authors also conjectured that the Martin boundary of $\mathcal{Z}$ should coincide with its minimal boundary. We give here a proof of this conjecture by describing the distribution of a large random path conditioned on having a prescribed endpoint. We also relate paths on the graph $\mathcal{Z}$ with paths on the Young lattice, and we finally give a central limit theorem for the Plancherel measure on the set of paths in $\mathcal{Z}$.
\end{abstract}
\maketitle

\section*{Introduction} 
\indent Let $A$ be a finite alphabet. A word $w$ in $A$ is an element of the free monoid $A^{*}$ generated by $A$: $w$ is uniquely written as $w_{1}w_{2}\cdots w_{n}$ with $n\geq 0$ and $w_{i}\in A$ for each $1\leq i\leq n$. A word $w'$ is called a subword of $w$ and is written as $w'\prec w$ when $w'$ is equal to $w_{i_{1}}\cdots w_{i_{r}}$ for some $1\leq i_{1}<\ldots<i_{r}\leq n$; the relation $\prec$ is an order relation on the set of words in $A$ which is called the subword relation. Following the subword relation, $A^{*}$ can be turned into a graded graph: the grading is given by the length of the words, and an edge is given between two words $w,w'$ of consecutive grading if and only if $w\prec w'$. Such a graph is rooted by abstractly adding a vertex $*$ and an edge between $*$ and the empty word $\emptyset$. The simplest non-trivial case is given by the alphabet $A_{2}$ of cardinality $2$ whose elements are written as $+$ and $-$: the corresponding graph, which is denoted by $\mathcal{Z}$ in this paper, has been introduced and first studied by Viennot in \cite{viennot1983maximal}. The first part of $\mathcal{Z}$ is displayed in \cref{graphZ}.

\indent From a purely algebraic point of view, the graph $\mathcal{Z}$ encodes the inductions and restrictions for the simple modules of the tower of Hecke algebras $\lbrace H_{n}(0)\rbrace_{n\geq 1}$ exactly like the Young graph for the tower of algebras $\lbrace \mathbb{C}S_{n}\rbrace_{n\geq 1}$, where $S_{n}$ denotes the symmetric group of order $n$ (see for example \cite{duchamp2002noncommutative}).
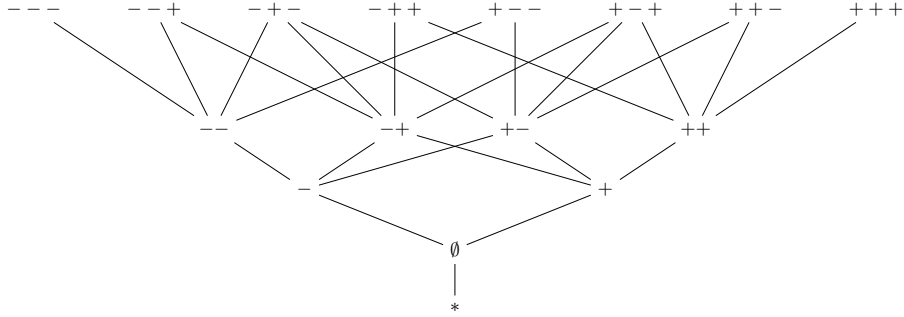
\begin{figure}\scalebox{0.8}{
\begin{tikzpicture}
\node (0) at (5,0){*};
\node (11) at (5,1){$\emptyset$};
\node (21) at (7.5,2){$+$};
\node (22) at (2.5,2){$-$};
\node (31) at (9,3){$++$};
\node (32) at (6,3){$+-$};
\node (33) at (4,3){$-+$};
\node (34) at (1,3){$--$};
\node (41) at (12,5){$+++$};
\node (42) at (10,5){$++-$};
\node (43) at (8,5){$+-+$};
\node (44) at (6,5){$+--$};
\node (45) at (4,5){$-++$};
\node (46) at (2,5){$-+-$};
\node (47) at (0,5){$--+$};
\node (48) at (-2,5){$---$};

\draw (0) to (11); 
\draw (11) to (21);
\draw (11) to (22);
\draw (21) to (31);
\draw (21) to (32);
\draw (21) to (33);
\draw (22) to (32);
\draw (22) to (33);
\draw (22) to (34); 
\draw (31) to (41);
\draw (31) to (42);
\draw (31) to (43);
\draw (31) to (45);
\draw (32) to (42);
\draw (32) to (43);
\draw (32) to (44);
\draw (32) to (46);
\draw (33) to (43);
\draw (33) to (45);
\draw (33) to (46);
\draw (33) to (47);
\draw (34) to (44);
\draw (34) to (46);
\draw (34) to (47);
\draw (34) to (48);

\end{tikzpicture}}
\caption{\label{graphZ}First levels of $\mathcal{Z}$.}

\end{figure}
Words in the alphabet $A_{2}$ play a particular role in the combinatorics of permutations. For $1\leq i\leq n-1$, we say that a permutation $\sigma$ of $n$, written as a word $\sigma(1)\sigma(2)\cdots\sigma(n)$, has a descent at $i$ when $\sigma(i)>\sigma(i+1)$. Using this definition, we can map each permutation $\sigma$ to a word $w(\sigma)=w_{1}\cdots w_{n-1}$ in $A_{2}^{*}$ by saying that $w_{i}=-$ if and only if $i$ is a descent of $\sigma$: in this way, $w(\sigma)$ describes exactly the set of descents of $\sigma$ and is called the descent word of $\sigma$. The description of the set of permutations having a given descent word is an important problem in combinatorics (see in particular the works of Viennot \cite{viennot1979permutations,viennot1981equidistribution}, Niven \cite{niven1968combinatorial} and de Bruijn \cite{de1970permutations}); for example, permutations whose descent words are the alternating words $+-+\cdots+-$ or $+-+\cdots -$ are called alternating permutations, and have been studied by Niven and de Bruijn in \cite{niven1968combinatorial} and \cite{de1970permutations}. Désiré André gave already in 1881 (see \cite{andre1881permutations}) an expression for the number of alternating permutations of $n$ and showed that this number is asymptotically $2(2/\pi)^{n}n!$ as $n$ grows to infinity. Several refinements of this result for other similar words have been found since (\cite{bender2004asymptotics,ehrenborg2002probabilistic}).

\indent In this paper, we use the link between words in $A_{2}^{*}$ and descents of permutations to study random paths on the graph $\mathcal{Z}$: in the sequel, a path on $\mathcal{Z}$ is always a (possibly infinite) path starting at the root and such that, at each step, the rank of the visited vertex is increased by one. As it has been suggested before, the graph $\mathcal{Z}$ is at the interface between different subjects in combinatorics and probability. Namely, paths on $\mathcal{Z}$ of length $n$ ending at a vertex $w$ are in bijection with permutations $\sigma$ of $n$ such that $w(\sigma)=w$ (see \cite{gnedin2006coherent,viennot1983maximal}). Moreover, infinite paths on $\mathcal{Z}$ correspond to infinite sequences of permutations  $(\sigma_{1},\sigma_{2},\ldots)$ which satisfy a particular coherence property: each $\sigma_{n}$ is obtained from $\sigma_{n+1}$ by deleting the $n+1$ symbol (in the word description of permutations). An example of such correspondence is given in \cref{equivPathPerm}.

\begin{figure}[h!]\scalebox{0.75}{
\begin{tikzpicture}
\node (0) at (0,3){$*$};
\node (1) at (3,3){$\emptyset$};
\node (2) at (6,3){$-$};
\node (3) at (9,3){$+-$};
\node (4) at (12,3){$-+-$};
\node (5) at (15,3){$-+--$};

\node (0') at (0,0){$*$};
\node (1') at (3,0){$(\mathbf{1})$};
\node (2') at (6,0){$(\mathbf{2}1)$};
\node (3') at (9,0){$(2\mathbf{3}1)$};
\node (4') at (12,0){$(\mathbf{4}231)$};
\node (5') at (15,0){$(42\mathbf{5}31)$};

\draw [->](1,3)--(2,3);
\draw [->](4,3)--(5,3);
\draw [->](7,3)--(8,3);
\draw [->](10,3)--(11,3);
\draw [->](13,3)--(14,3);

\draw [->](1,0)--(2,0);
\draw [->](4,0)--(5,0);
\draw [->](7,0)--(8,0);
\draw [->](10,0)--(11,0);
\draw [->](13,0)--(14,0);

\draw [->, very thick] (7.5,2)--(7.5,1);
\end{tikzpicture}}
\caption{\label{equivPathPerm}An example of the equivalence between paths on $\mathcal{Z}$ and sequences of coherent permutations.}
\end{figure}
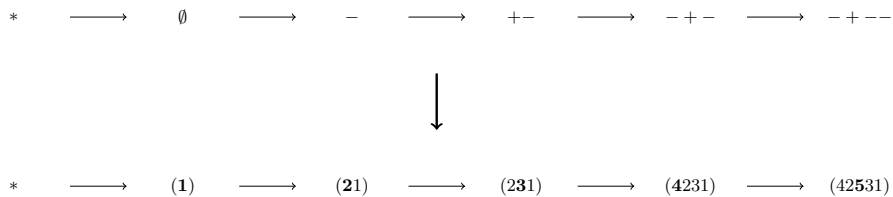
\indent A measure on the set of paths on a graded graph is called harmonic if the following property holds for the corresponding random path: conditioned on the event that a path goes through a vertex $\mu$ of rank $n$, the distribution of the first $n$ steps of the random path is the uniform distribution on the set of all paths between the root and $\mu$. A harmonic measure which is supported on the set of finite paths is simply called finite: a finite harmonic measure admits a straightforward description as a convex combination of uniform measures. If the harmonic measure is supported on the set of infinite paths, then this measure is called a Gibbs measure (see for example \cite{YoungBouquet}). Gibbs measures are in general much more difficult to describe than finite harmonic measures. An important result from the Martin boundary theory yields that any Gibbs measure can be approximated by finite harmonic measures (see \cref{MartEntrBoundary} for a precise statement).

\indent In the case of the graph $\mathcal{Z}$, the description of the set of Gibbs measures has been achieved by Gnedin and Olshanski in \cite{gnedin2006coherent} by using a link between the graph structure of $\mathcal{Z}$ and the ring $QSym$ of quasi-symmetric functions. The latter is a refinement of the ring $Sym$ of symmetric functions, and homogeneous bases of $QSym$ are indexed by words in $\lbrace +,-\rbrace$; then, the graph $\mathcal{Z}$ encodes the multiplication in a particular basis of $QSym$ called the fundamental basis. Thanks to this relation with $QSym$, Gnedin and Olshanski also established a relationship between Gibbs measures on $\mathcal{Z}$ and Gibbs measures on other graphs: the Young graph $\mathcal{Y}$ introduced by Vershik and Kerov, Kingman's partition graph $\mathcal{P}$ \cite{kingman1978representation} and the composition graph $\mathcal{C}$ introduced by Gnedin \cite{gnedin1997representation}. The Young graph is described in \cref{RelYouGra,presentationYoungGraph} and we refer to \cite[Section 1]{gnedin2006coherent} for a description of the graphs $\mathcal{P}$ and $\mathcal{C}$. In particular, Gnedin and Olshanski introduced a natural harmonic measure on $\mathcal{Z}$ which yields the Plancherel measure on the Young graph: this measure, also called the Plancherel measure, is roughly speaking the uniform measure on the set of infinite paths on $\mathcal{Z}$.

\indent  Whereas the relation between $\mathcal{Z}$ and $\mathcal{Y}$ has only be proven at the level of the harmonic measures, the one between $\mathcal{Y},\mathcal{P}$ and $\mathcal{C}$ has been shown to hold also at the level of paths. On the one hand, the set of paths on $\mathcal{P}$ is the same as the set of paths on $\mathcal{Y}$, but with different weights (see \cite{kerov1998boundary} for the description of the weights for $\mathcal{P}$ and for a natural interpolation between $\mathcal{Y}$ and $\mathcal{P}$), and the same holds for $\mathcal{C}$ and $\mathcal{Z}$. On the other hand, the set of paths on $\mathcal{C}$ is in bijection with a particular set of sequences of permutations, and considering the cycle structure of these permutations yields a surjective map from the paths on $\mathcal{C}$ to the ones on $\mathcal{P}$ (see \cite[Section 1]{gnedin2006coherent} and \cite{GnedinRegeneration} for a detailed description of this map). 
 
\indent In this paper, we present three results on the graph $\mathcal{Z}$ which continue the study of Gnedin and Olshanski:
\begin{itemize}
\item we describe how finite harmonic measures approximate Gibbs measures: we show that the convergence in law of a sequence of finite harmonic measures towards a Gibbs measure can be parametrized by a simple topological space. As a consequence, we prove that the classification of harmonic measures by Gnedin and Olshanski has a geometrical meaning in terms of a particular topology on $\mathcal{Z}$: from a harmonic analysis point of view, this yields a description of the Martin boundary of the graph $\mathcal{Z}$ (which had been already conjectured in \cite{gnedin2006coherent}) and an identification of the Martin boundary of $\mathcal{Z}$ with its minimal boundary;
\item we provide a correspondence between paths on $\mathcal{Z}$ and paths on $\mathcal{Y}$: this correspondence uses the Robinson-Schensted-Knuth (RSK) algorithm, which establishes a bijection between permutations of $n$ and pairs of standard Young tableaux of size $n$ and same shape. This completes the description of the relations between paths on $\mathcal{Y},\mathcal{P},\mathcal{C}$ and $\mathcal{Z}$;
\item we provide a law of large numbers for the asymptotic behavior of a random path following a Gibbs measure. We describe also the fluctuations for the Plancherel measure. 
\end{itemize}
\indent An introduction to the concepts of graded graph, harmonic measure and Martin compactification is given in \cref{SectionProbaGradedGraph}. \cref{sectionSummary} is devoted to the presentation of the graph $\mathcal{Z}$ and a precise statement of the results. Along the proofs, it is often more convenient to use compositions, which are just an alternative description of the words in $\lbrace +,-\rbrace$: compositions are also introduced in \cref{sectionSummary}. \cref{sectionOrientedPaintbox} details the main result of \cite{gnedin2006coherent}, which provides a description of the set of Gibbs measure on $\mathcal{Z}$ thanks to the so-called oriented paintbox construction. \crefrange{SectionintroFamilyXi}{SectionmartinBoundary} are the successive steps leading to the description of the Martin boundary: a sketch of the method is given at the end of \cref{sectionOrientedPaintbox}. \cref{SectionRelationYong} deals with the path correspondence between $\mathcal{Y}$ and $\mathcal{Z}$, and \cref{SectionLawLargeNumber} shows the law of large number and central limit theorem for the Plancherel measure.

\section{Probability on graded graphs}\label{SectionProbaGradedGraph}
In this section, we introduce various notations and concepts related to graded graphs; in particular, we explain the notion of harmonic measures and Martin entrance boundary for such graphs. The content of this section mostly follows \cite[Section 2]{kerov1998boundary}.
\subsection{Graded graph}
The notations used here are from \cite{stanley2011enumerative}. A rooted, graded graph $\mathcal{G}$ is a triple $(V,\rho,E)$, where
\begin{itemize}
\item $V$ is a denumerable set of vertices with a distinguished element $\mu_{0}$,
\item $\rho:V\rightarrow \mathbb{N}$ is a map satisfying $\rho^{-1}(\lbrace 0\rbrace)=\lbrace \mu_{0}\rbrace$ and $\rho^{-1}(\lbrace n\rbrace)$ finite for all $n\geq 0$, and
\item the adjacency matrix E is a $V\times V$-matrix with entries in $\mathbb{R}^{+}$, such that $E(\mu,\nu)$ is zero if $\rho(\nu)\not=\rho(\mu)+1$.
\end{itemize}
The map $\rho$ is called the rank map of the graded graph; the set $\rho^{-1}(\lbrace n\rbrace)$ is called the $n-$th level set of $\mathcal{G}$ and denoted by $V_{n}$. We write $\mu\nearrow \nu$ if $E(\mu,\nu)>0$. A path $\gamma$ on $\mathcal{G}$ is a sequence of vertices $(\gamma_{0},\gamma_{1},\ldots)$ of increasing rank such that for all $i\geq 0$, $\gamma_{i}\nearrow\gamma_{i+1}$. A path is said finite (resp.~infinite) if the corresponding sequence of vertices is finite (resp.~infinite): in the finite case, the length of the path, denoted by $l(\gamma)$, is the number of edges which are crossed along the path, which is also the number of vertices minus one; in the infinite case, we simply set $l(\gamma)=\infty$. The set of all paths on $\mathcal{G}$ starting at the root is denoted by $\Gamma(\mathcal{G})$: this set is partitioned into two sets, the set $\Gamma_{f}(\mathcal{G})$ of finite paths and the set $\Gamma_{\infty}(\mathcal{G})$ of infinite paths.

If $\gamma$ is a path on $\mathcal{G}$ and $k\leq l(\gamma)$ is an integer, $\gamma_{k}$ always denotes the position of $\gamma$ after $k$ steps, whereas $\gamma_{\downarrow k}$ denotes the path $\gamma$ restricted to its $k$ first steps. The weight $w(\gamma)$ of a finite path $\gamma=(\gamma_{0},\ldots,\gamma_{n})$ is defined as the product $\prod_{i=0}^{n-1}E(\gamma_{i},\gamma_{i+1})$.
Let $\mu,\nu$ be two vertices such that $\rho(\mu)\leq \rho(\nu)$. The set of paths from $\mu$ to $\nu$ is denoted by $\Gamma(\mu,\nu)$; we define a path counting function $d:V\times V\rightarrow \mathbb{R}$ by the sum of of the path weights:
$$d(\mu,\nu)=\sum_{\gamma\in \Gamma(\mu,\nu)} w(\gamma).$$
When all the entries of the adjacency matrix are in $\lbrace 0,1\rbrace$, $d(\mu,\nu)$ is equal to the cardinality of $\Gamma(\mu,\nu)$. When $\mu=\mu_{0}$, we simply write $\Gamma(\nu)$ and $d(\nu)$ respectively instead of $\Gamma(\mu_{0},\nu)$ and $d(\mu_{0},\nu)$.

\subsection{Harmonic measures on graded graphs}\label{harmonicmeasuresGradedgraphs}
This paragraph is an introduction to the concept of harmonic measures on a graded graph $\mathcal{G}$. The terminology may vary in the literature, and infinite harmonic measures are also called Gibbs measures (see \cite{YoungBouquet}). We should first precise the $\sigma-$algebra on the set $\Gamma(\mathcal{G})=\Gamma_{f}(\mathcal{G})\sqcup\Gamma_{\infty}(\mathcal{G})$: we consider the $\sigma-$algebra $\mathcal{A}(\mathcal{G})$, which denotes the coarsest $\sigma-$algebra containing all the sets $\lbrace \gamma\in\Gamma(\mathcal{G}), l(\gamma)\geq k, \gamma_{k}=\mu\rbrace$ for $k\geq 0$ and vertices $\mu$ of rank $k$. By the axioms of a $\sigma-$algebra, $\mathcal{A}_{\mathcal{G}}$ contains each set $\lbrace \gamma \in\Gamma(\mathcal{G}),l(\gamma)=k\rbrace$ and induces by restriction the discrete $\sigma-$algebra on $\Gamma_{f}(\mathcal{G})$. 

For $\tau\in\Gamma_{f}(\mathcal{G})$ with $l(\tau)=k$, the set $\Gamma_{\tau}=\lbrace \gamma\in\Gamma, l(\gamma)\geq k, \gamma_{\downarrow k}=\tau\rbrace$ is thus a measurable set on $\Gamma$. Moreover, by the definition the $\sigma-$algebra $\mathcal{A}(\mathcal{G})$, any probability measure on $\mathcal{A}(\mathcal{G})$ is uniquely defined by its values on the sets $\Gamma_{\tau},\tau \in \Gamma_{f}(\mathcal{G})$.
\begin{defn}\label{harmMeasure}
A probability measure $\mathbb{P}:\mathcal{A}(\mathcal{G})\longrightarrow \mathbb{R}$ is called harmonic when
$$\frac{\mathbb{P}(\Gamma_{\tau})}{w(\tau)}=\frac{\mathbb{P}(\Gamma_{\tau'})}{w(\tau')},$$
for all $\tau,\tau'\in\Gamma_{f}(\mathcal{G})$ such that $l(\tau)=l(\tau')$ and $\tau_{l(\tau)}=\tau'_{l(\tau')}$.
\end{defn}
The set of harmonic measures (resp.~harmonic measures with support in $\Gamma_{f}(\mathcal{G})$, resp.~harmonic measures with support in $\Gamma_{\infty}(\mathcal{G})$) is denoted by $\mathcal{H}(\mathcal{G})$ (resp.~$\mathcal{H}_{f}(\mathcal{G})$, resp.~$\mathcal{H}_{\infty}(\mathcal{G})$). The elements of $\mathcal{H}_{\infty}(\mathcal{G})$ are also called Gibbs measures on $\mathcal{G}$.

It is readily seen that $\mathcal{H}(\mathcal{G})$ is a convex set, and by the application of Choquet theory (see \cite[ Proposition 10.21]{Goodearl}), each element of $\mathcal{H}(\mathcal{G})$ can be uniquely represented as a convex mixture of the extreme points of $\mathcal{H}(\mathcal{G})$. Let us denote by $\partial \mathcal{H}(\mathcal{G})$ the set of extreme points of $\mathcal{H}(\mathcal{G})$. Conditioning harmonic measures on the length of the random path shows that an extreme harmonic measure has to belong either to $\mathcal{H}_{f}(\mathcal{G})$ or to 
$\mathcal{H}_{\infty}(\mathcal{G})$. Therefore,
the description of $\partial\mathcal{H}(\mathcal{G})$ can be split into two cases, by first considering the extreme points in $\mathcal{H}_{f}(\mathcal{G})$, and then the ones in $\mathcal{H}_{\infty}(\mathcal{G})$.

The finite case is much simpler than the infinite one. Let $\mathbb{P}$ be a harmonic measure with support in $\Gamma_{f}(\mathcal{G})$. For any $\nu\in  V$ such that $\mathbb{P}\big(\Gamma(\nu)\big)>0$, conditioning $\mathbb{P}$ on $\Gamma(\nu)$ yields a new harmonic measure $\mathbb{P}_{\nu}$ defined by $\mathbb{P}_{\nu}(\cdot)=\frac{\mathbb{P}(\cdot\cap \Gamma(\nu))}{\mathbb{P}(\Gamma(\nu))}$.
By harmonicity, $\mathbb{P}_{\nu}$ is the unique distribution on $\Gamma(\nu)$ with 
$$\mathbb{P}_{\nu}(\tau)=\frac{w(\tau)}{d(\nu)},\quad \tau\in \Gamma(\nu).$$
Moreover, since the family $\big\lbrace \Gamma(\nu)\big\rbrace_{\nu\in V}$ is a partition of $\Gamma_{f}(\mathcal{G})$, $\mathbb{P}$ is written as the convex combination $\mathbb{P}=\sum_{\nu\in V}\mathbb{P}\big(\Gamma(\nu)\big)\mathbb{P}_{\nu}$.
Therefore, the set of extreme points in $\mathcal{H}_{f}(\mathcal{G})$ is exactly the set $\lbrace \mathbb{P}_{\nu}\rbrace_{\nu\in V}$. In particular, there is a natural bijection between $V$ and $\partial\mathcal{H}(\mathcal{G})\cap \mathcal{H}_{f}(\mathcal{G})$ which sends the vertex $\nu$ to $\mathbb{P}_{\nu}$.
For each $\nu\in V$, denote by $\gamma_{\nu}$ the random path following the measure $\mathbb{P}_{\nu}$. The law of any restriction of $\gamma_{\nu}$ can be described thanks to the Martin kernel of the graph $\mathcal{G}$.
\begin{defn}\label{martinKernelDef}
The Martin kernel of the graph $\mathcal{G}$ is the function 
$$K:\left\lbrace \begin{matrix}
V\times V&\longrightarrow &\mathbb{R}\\
(\mu, \nu)&\mapsto&\frac{d(\mu,\nu)}{d(\nu)}
\end{matrix}\right..$$
\end{defn}
To emphasize the different roles played by $\mu$ and $\nu$, we will write $K_{\mu}(\nu)$ instead of $K(\mu,\nu)$. For any fixed $\mu\in V$, $K_{\mu}(\cdot)$ is a map from $V$ to $\mathbb{R}$ that vanishes on $\lbrace \nu\in V, \Gamma(\mu,\nu)=\emptyset\rbrace$. With this definition, for any vertices $\mu,\nu$ such that  $\rho(\mu)\leq \rho(\nu)$ and for any path $\tau$ ending at $\mu$,
$$\mathbb{P}_{\nu}(\Gamma_{\tau})=w(\tau)K_{\mu}(\nu).$$
In the infinite case, the description of extreme harmonic measures is much more difficult and greatly depends on the graph structure. Let $\mathbb{P}$ be a harmonic measure such that $\mathbb{P}(\Gamma_{f}(\mathcal{G}))=0$. By \cref{harmMeasure}, there exists a function $p:V\longrightarrow \mathbb{R}$ such that for any vertex $\nu\in V$ and any finite path $\tau$ ending at $\nu$, 
\begin{equation}\label{probaWeight}
\mathbb{P}(\Gamma_{\tau})=w(\tau)p(\nu).
\end{equation} 
Since a probability measure on $\Gamma(\mathcal{G})$ is characterized by its value on the sets $\Gamma_{\tau}, \tau\in \Gamma_{f}(\mathcal{G})$, $\mathbb{P}$ is completely determined by the map $p$; let us examine the properties of $p$.

Suppose that $\tau$ is a finite path of length $k$ ending at $\mu$, and denote by $G(\tau)$ the set of finite paths $\gamma$ of length $k+1$ such that $\gamma_{\downarrow k}=\tau$. Since $$\Gamma_{\tau}=\lbrace \tau\rbrace\sqcup\left(\bigsqcup\limits_{\gamma \in G(\tau)}\Gamma_{\gamma}\right)$$ 
and $\mathbb{P}$ is supported on the set of infinite paths (implying that $\mathbb{P}(\lbrace \tau\rbrace)=0)$,
\begin{equation}\label{partiProba}
\mathbb{P}(\Gamma_{\tau})=\sum_{\gamma\in G(\tau)} \mathbb{P}(\Gamma_{\gamma}).
\end{equation}
A path $\gamma$ of length $k+1$ is in $G(\gamma)$ if and only if $\gamma_{\downarrow k}=\tau$ and  $\mu=\gamma_{k}\nearrow\gamma_{k+1}$, and, if it is the case, its weight is $w(\gamma)=w(\tau)E(\mu,\gamma_{k+1})$. Therefore, with \eqref{probaWeight} and \eqref{partiProba},
\begin{equation}\label{recursion}
p(\mu)=\sum_{\mu\nearrow \nu}E(\mu,\nu)p(\nu).
\end{equation}
Reciprocally, any non-negative function $p:V\longrightarrow \mathbb{R}^{+}$ satisfying $p(\mu_{0})=1$ and \eqref{recursion} yields a harmonic measure in $\mathcal{H}_{\infty}(\mathcal{G})$ with the formula \eqref{probaWeight}. Therefore, the problem of describing the set of extreme points in $\mathcal{H}_{\infty}(\mathcal{G})$ is equivalent to the problem of finding the set of extreme points in the convex set of non-negative solutions of \eqref{recursion} with value $1$ on the root of $\mathcal{G}$.

\subsection{Martin entrance boundary}\label{MartEntrBoundary}

The theory of Martin boundary has been introduced by Doob and Martin (see \cite{doob1958discrete}) in order to approximate Gibbs measures by finite harmonic measures. This requires first the choice of an adequate topology on the set of probability measures on $\Gamma(\mathcal{G})$. We use the coarsest topology such that each evaluation map $\mathbb{P}\mapsto \mathbb{P}(\Gamma_{\tau})$ with $\tau\in \Gamma_{f}(\mathcal{G})$ is continuous: namely, a sequence of measures $(\mathbb{P}_{n})_{n\geq 1}$ converges to $\mathbb{P}$ if and only if $\mathbb{P}_{n}(\Gamma_{\tau})$ converges to $\mathbb{P}(\Gamma_{\tau})$ for all $\tau\in \Gamma_{f}(\mathcal{G})$. 

By Tychonoff's theorem and standard arguments, $\mathcal{H}(\mathcal{G})$ is a compact space in this topology. The discussion on the finite case in the previous paragraph yields that $\partial \mathcal{H}(\mathcal{G})\cap \mathcal{H}_{f}(\mathcal{G})$ is a discrete space with respect to this topology: since the set $\partial \mathcal{H}(\mathcal{G})\cap \mathcal{H}_{f}(\mathcal{G})$ is in bijection with $V$ with the map $\mu\mapsto \mathbb{P}_{\mu}$, this yields a topological embedding of $V$ (with the discrete topology) inside $\mathcal{H}(\mathcal{G})$. From now on, $V$ is identified with its image in $\mathcal{H}(\mathcal{G})$.

The choice of the topology and the definition of the Martin kernel in \cref{martinKernelDef} yield that a sequence $(\nu_{n})_{n\geq 0}$ in $V^{\mathbb{N}}$ converges in $\mathcal{H}(\mathcal{G})$ if and only if $K_{\mu}(\nu_{n})$ converges for each $\mu\in V$. We extend continuously $K_{\mu}$ on the closure of $V$ in $\mathcal{H}(\mathcal{G})$: for each element $\omega=\lim \nu_{n}$ in the closure of $V$, we set $K_{\mu}(\omega)=\lim K_{\mu}(\nu_{n})$. By compactness of $\mathcal{H}(\mathcal{G})$, the closure of $V$ in $\mathcal{H}(\mathcal{G})$ is a compact space, denoted by $\hat{V}$. 

The boundary of this space $\partial_{M}\mathcal{G}=\hat{V}\setminus V$ is called the Martin entrance boundary of $\mathcal{G}$. The fundamental result of Doob describes how Gibbs measures are approximated by finite harmonic measures.
\begin{thm}\label{martinEntranceBoundary}
With the notations above, the two following results hold:
\begin{itemize}
\item there exists a Borel subset $\partial_{\min}\mathcal{G}\subset \partial_{M}\mathcal{G}$, called the minimal boundary of $\mathcal{G}$, such that for any measure $\mathbb{P} \in \mathcal{H}_{\infty}(\mathcal{G})$, there exists a unique measure $\lambda_{\mathbb{P}}$ on $\partial_{\min}\mathcal{G}$ giving the kernel representation
$$\mathbb{P}(\Gamma_{\tau})=w(\tau) \int_{\partial_{\min}\mathcal{G}}K_{\mu}(x)d\lambda_{\mathbb{P}}(x),$$
for all $\mu\in V$ and $\tau\in \Gamma(\mu)$; 
\item if $\gamma$ is random path with distribution $\mathbb{P}\in\mathcal{H}_{\infty}(\mathcal{G})$, $(\gamma_{k})_{k\geq 0}$ converges almost surely to a $\partial_{\min}\mathcal{G}$-valued random variable $\gamma_{\infty}$; the law of $\gamma_{\infty}$ is given by the distribution $\lambda_{\mathbb{P}}$ from the first statement. Moreover, the probability that $(\gamma_{k})_{k\leq 0}$ goes through $\mu$ is exactly $d(\mu)\mathbb{E}\big(K_{\mu}(\gamma_{\infty})\big)$. 
\end{itemize}
 \end{thm}
The proof of this theorem can be found in \cite{doob1958discrete}. By the the first point of the theorem, the minimal boundary of $\mathcal{G}$ is exactly the set of extreme points of $\mathcal{H}_{\infty}(\mathcal{G})$: namely, $\partial_{\min}\mathcal{G}=\partial\mathcal{H}\cap\mathcal{H}_{\infty}(\mathcal{G})$. Being a subset of the Martin boundary, an element of the minimal boundary can be obtained as the limit of a sequence in $V$. The main problem is the identification of the minimal boundary inside the Martin boundary. In many graded graphs, both boundaries actually coincide; however, this is not always the case. This problem is usually solved by obtaining an adequate geometric description of the Martin boundary, which has been until now only defined as a limit object.

Remark that the space $\hat{V}$ contains all the information on the asymptotic behavior of $\mathbb{P}_{\mu}$ for vertices $\mu$ of large rank. The description of the Martin boundary is thus equivalent to the study of the approximation of Gibbs measures by finite harmonic measures.
\subsection{Geometric realization}
In many favorable cases, the two previous problems can be solved by introducing a geometric realization of the graph. The content of this paragraph is largely inspired by \cite[Section 7]{kerov1996boundary} and \cite[Section 2 and 3]{kerov1998boundary}: in particular, \cref{homeoTwoTopo} below is a straightforward generalization of the example given in \cite[Section 3]{kerov1998boundary}.
\begin{defn}\label{geomreal}
Let $\mathcal{G}=(V,E,\rho)$ be a graded graph. $\mathcal{G}$ admits a geometric realization $(Y,f,g)$ if there exists a compact space $Y$ and two maps $f:Y\longrightarrow \partial_{\min}\mathcal{G}$, $g:V\longrightarrow Y$, such that:
\begin{itemize}
\item $f$ is a homeomorphism;
\item $g$ is injective on $V_{n}$ for each $n\geq 0$;
\item for any sequence $(\mu_{n})_{n\geq 0}$ in $V^{\mathbb{N}}$ such that $\big(\rho(\mu_{n})\big)_{g\geq 1}$ is increasing, the convergence of $g(\mu_{n})$ towards $y\in Y$ implies the convergence of $(\mu_{n})_{n\geq 1}$ towards $f(y)$.
\end{itemize}
\end{defn}
The geometric realization $(Y,f,g)$ of a graded graph $\mathcal{G}$ gives a geometric way to encode approximations of Gibbs measures. In particular, $\hat{V}$ is described as a compact subset of $[0,1]\times Y$.
\begin{prop}\label{homeoTwoTopo}
Suppose that $\mathcal{G}$ is a graded graph which admits a geometric realization $(Y,f,g)$. Then,
\begin{itemize}
\item $\partial_{M}\mathcal{G}=\partial_{\min}\mathcal{G}$, and
\item $\hat{V}$ is homeomorphic to the space $\left(\bigcup_{n\geq 0} \lbrace\frac{1}{n+1}\rbrace\times g(V_{n})\right)\cup \left(\lbrace 0\rbrace\times Y\right)$, with the topology induced from $[0,1]\times Y$ by restriction.
\end{itemize}
\end{prop}
\begin{proof}
Let $\omega\in \partial_{M}\mathcal{G}$. There exists a sequence $(\mu_{n})_{n\geq 0}$ in $V$ which converges to $\omega$. Since $Y$ is compact, there exists an increasing sequence $(n_{k})_{k\geq 0}$ and $y\in Y$ such that $g(\mu_{n_{k}})$ converges to $y$ when $k$ goes to infinity.

By the definition of the geometric realization, $\mu_{n_{k}}$ converges to $f(y)\in \partial_{\min}\mathcal{G}$. Thus, $\omega=f(y)$ and $\omega\in\partial_{\min}\mathcal{G}$. Therefore, $\partial_{\min}\mathcal{G}=\partial_{M}\mathcal{G}$.

Let $\Phi$ be the map from $\hat{V}$ to $[0,1]\times Y$ defined by
$$\left\lbrace\begin{matrix}
\Phi(\mu)=\left(\frac{1}{\rho(\mu)+1},g(y)\right)&\text{ if }\mu\in V,\\
\Phi(\omega)=\big(0,f^{-1}(\omega)\big)&\text{ if }\omega\in\partial_{M}\mathcal{G}.
\end{matrix}\right.$$
The map $\Phi$ is injective and continuous by the definition of the geometric realization; since $\hat{V}$ is compact, $\Phi$ is a homeomorphism from $\hat{V}$ to $\Phi(\hat{V})$. Since $\Phi(\hat{V})= \bigcup_{n\geq 0} \lbrace\frac{1}{n+1}\rbrace\times g(V_{n})\cup \lbrace 0\rbrace\times Y$, the result is deduced.
\end{proof}
In the case of the graph $\mathcal{Z}$ that we are studying, the minimal boundary has already been described by Gnedin and Olshanski in \cite{gnedin2006coherent}. They also suggested a geometric realization for this graph in Conjecture 45 of the aforementioned paper. We prove this conjecture in \cref{SectionmartinBoundary}.

\section{Summary of the results}\label{sectionSummary}
This section is devoted to the definition of the graph $\mathcal{Z}$ and the statement of the results. This section uses the notations of the previous section.
\subsection{The graph $\mathcal{Z}$}\label{construcGraphZ}
As in the introduction, $A_{2}$ denotes the set with two elements $+,-$, $A_{2}^{*}$ denotes  the set of words in $A_{2}$, and $l(w)$ denotes the length of a word $w\in A_{2}^{*}$.
\begin{defn}
The graph $\mathcal{Z}$ is the graded graph $(V_{\mathcal{Z}},\rho_{\mathcal{Z}},E_{\mathcal{Z}})$ such that:
\begin{itemize}
\item the set of vertices $V_\mathcal{Z}$ is $A_{2}^{*}\cup\lbrace *\rbrace$;
\item the rank function $\rho_{\mathcal{Z}}$ is defined by $\rho_{\mathcal{Z}}(w)=l(w)+1$ for $w\in X^{*}_{2}$ and $\rho_{\mathcal{Z}}(*)=0$;
\item for all $w,w'\in V_{\mathcal{Z}}$, $E_{\mathcal{Z}}(w,w')\in \lbrace 0,1\rbrace$, and $E_{\mathcal{Z}}(w,w')=1$ if and only if $w=*$ and $w'=\emptyset$, or $w,w'\in X^{*}_{2}$ are such that $l(w')=l(w)+1$ and $w$ is a subword of $w'$.
\end{itemize}
\end{defn}
In the sequel, the set of vertices of $\mathcal{Z}$ is simply denoted by $\mathcal{Z}$ instead of $V_{\mathcal{Z}}$: in particular, the level sets of $\mathcal{Z}$ are denoted by $\mathcal{Z}_{n}$ for $n\geq 0$ and the Martin compactification, minimal boundary and Martin boundary of $\mathcal{Z}$ are respectively denoted by $\hat{\mathcal{Z}},\partial_{\min}\mathcal{Z}$ and $\partial_{M}\mathcal{Z}$. The first levels of $\mathcal{Z}$ are displayed in \cref{graphZ}. This graph has been introduced by Viennot, and all the results of this paragraph can be found in \cite{viennot1983maximal}. 

There is a simple description of finite paths on $\mathcal{Z}$. Following \cite{viennot1983maximal}, define a block (resp.~positive block, resp.~negative block) as a word in $A_{2}^{*}$ whose elements are all identical (resp.~equal to $+$, resp.~equal to $-$). A word $w\in A_{2}^{*}$ can be uniquely written as a sequence of blocks $b_{1}\cdots b_{r}$, where consecutive blocks have opposite signs. For example, the word $+++---+--+++$ admits the decomposition $b_{1}b_{2}b_{3}b_{4}b_{5}$ where $b_{1}=+++$, $b_{2}=---$, $b_{3}=+$, $b_{4}=--$ and $b_{5}=+++$.

Suppose that $w$ is a word in $A_{2}^{*}$ with the block decomposition $b_{1}\cdots b_{r}$; then, $w$ has exactly $r$ subwords of length $l(w)-1$, each of them obtained by decreasing the length of one block by one (where a block is simply deleted when its length is equal to zero). We choose the convention that the length of a positive block is reduced by deleting the last letter of the block and the length of a negative block is reduced by deleting the first letter of the block.

From the rule above, we have that for each word $w=w_{1}\cdots w_{n-1}$ of length $n-1$, there are exactly $n+1$ distinct words of length $n$ having $w$ as a subword: the words $w^{+0}:=-w$ and $w^{+n}:=w+$, and each word $w^{+i}:=w_{1}\cdots w_{i-1}+-w_{i+1}\cdots w_{n-1}$ for $1\leq i\leq n-1$ (see \cite[Lemma 2.1]{viennot1983maximal}). In particular, each finite path $\gamma$ of length $n$ on $\mathcal{Z}$ is uniquely determined by a sequence of integers $(t(1),\ldots,t(n-1))$ with $t(i)\in \lbrace 0,\ldots,i+1\rbrace$: the vertex $\gamma_{i+1}$ is defined as $\gamma_{i+1}=\gamma_{i}^{+t(i)}$. Reciprocally, any sequence of this form yields a path of length $n$ on $\mathcal{Z}$.

\subsection{Alternative description of $\mathcal{Z}$}\label{alternativeDescription}
We will often use an alternative description of the graph $\mathcal{Z}$, which is based on a bijective correspondence between words in $\lbrace +,-\rbrace$, ribbon Young diagrams and compositions.
\begin{defn} 
Let $n\geq 1$. A ribbon Young diagram of size $n$ is a skew Young diagram of size $n$ which is connected and does not contain any $2\times2$ square.\\
A composition $\lambda$ of $n$, written as $\lambda\vdash n$, is a sequence of positive integers $(\lambda_{1},\ldots,\lambda_{r})$ such that $\sum_{j=1}^{r} \lambda_{j}=n$.
\end{defn}

A ribbon Young diagram is completely described by the sequence of lengths of its rows and we simply write $(\lambda_{1},\ldots,\lambda_{r})$ to denote a ribbon Young diagram with first row having length $\lambda_{1}$, second row having length $\lambda_{2}$, and so on. This gives a canonical bijection between compositions of $n$ and ribbon Young diagrams of size $n$, which is similar to the one between partitions of $n$ and Young diagrams of size $n$. In the sequel, compositions and ribbon Young diagrams are considered as the same object: for example, the cell $i$ of a composition $\lambda$ refers to the cell $i$ of the ribbon Young diagram corresponding to $\lambda$ by the above bijection. 

The set of ribbon Young diagrams (or, equivalently, compositions) is denoted by $\Lambda$ and the set of ribbon Young diagrams of size $n$ is denoted by $\Lambda_{n}$.
 We number from $1$ to $n$ the cells of a ribbon Young diagram of size $n$, starting from the top left cell and finishing at the bottom right cell: for $2\leq i\leq n$ the cell $i$ is either below or right to the cell $i-1$. 

A word $w$ of length $n-1$ yields a ribbon Young diagram $\lambda(w)$ with $n$ cells as follows: the cell $i+1$ is right to the cell $i$ in $\lambda(w)$ when $w_{i}=+$, and the cell $i+1$ is below the cell $i$ when $w_{i}=-$. For example, the ribbon Young diagram $\lambda(w)$ of length $10$ corresponding to $w=++-+-+++-$ is represented in \cref{fig1} (the numbering of the cells has been omitted).
\begin{figure}[!h]
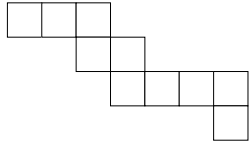


\gyoung(;;;,::;;,:::;;;;,::::::;)
\caption{\label{fig1}Ribbon Young diagram $(3,2,4,1)$ corresponding to the word $++-+-+++-$.}
\end{figure}

This gives clearly a bijection between words in $A_{2}^{*}$ of length $n-1$ and ribbon Young diagrams of size $n$. If $\lambda$ is a ribbon Young diagram, we denote by $w(\lambda)$ the corresponding word in $A_{2}^{*}$: if we write $w(\lambda)$ as $w_{1}\cdots w_{n-1}$, then $w_{i}=+$ if the cell $i+1$ is right to the cell $i$ and $w_{i}=-$ if the cell $i+1$ is below the cell $i$. The bijection between ribbon Young diagrams and words in $A_{2}^{*}$ can be translated into the language of compositions. Denote by $D_{\lambda}$ the descent set of a composition $\lambda=(\lambda_{1},\ldots,\lambda_{r})$ of $n$, which is the subset $\lbrace \lambda_{1},\lambda_{1}+\lambda_{2},\ldots,\sum_{1}^{r-1}\lambda_{i}\rbrace$ of $\lbrace 1,\ldots,n-1\rbrace$. Then, the equivalent bijection maps each composition $\lambda$ of $n$ to the word $w(\lambda)=w_{1}\cdots w_{n-1}$ such that $w_{i}=+$ if and only if $i\not\in D_{\lambda}$. 

Thanks to the above bijections, the graph $\mathcal{Z}$ can be alternatively described in terms of ribbon Young diagrams. 
 \begin{enumerate}
\item The set of vertices of rank $n$ of $\mathcal{Z}$ is the set of ribbon Young diagrams of size $n$. The unique vertex of rank $0$ is simply denoted by $\emptyset$.
\item Let $\mu=(\mu_{1},\ldots,\mu_{s})$ and $\lambda=(\lambda_{1},\ldots,\lambda_{r})$ be two ribbon Young diagrams of size respectively $n$ and $n+1$. There is an edge between $\mu$ and $\lambda$ if and only if
\begin{itemize}
\item either $r=s$ and for each $i$ except one $\mu_{i}=\lambda_{i}$ (thus, we have $\lambda_{j}=\mu_{j}+1$ for exactly one $1\leq j\leq r$), or
\item $r=s+1$, and there exists $j$ such that: for $k<j$, $\lambda_{k}=\mu_{k}$, $\lambda_{j}+\lambda_{j+1}-1=\mu_{j}$, and for $k>j$, $\lambda_{k+1}=\mu_{k}$ (namely, the row $\mu_{j}$ is split, and one cell is added at the end of the first piece).
\end{itemize}
\end{enumerate}
The first four levels of the graph $\mathcal{Z}$ in this alternative description are displayed in \cref{fig4}.
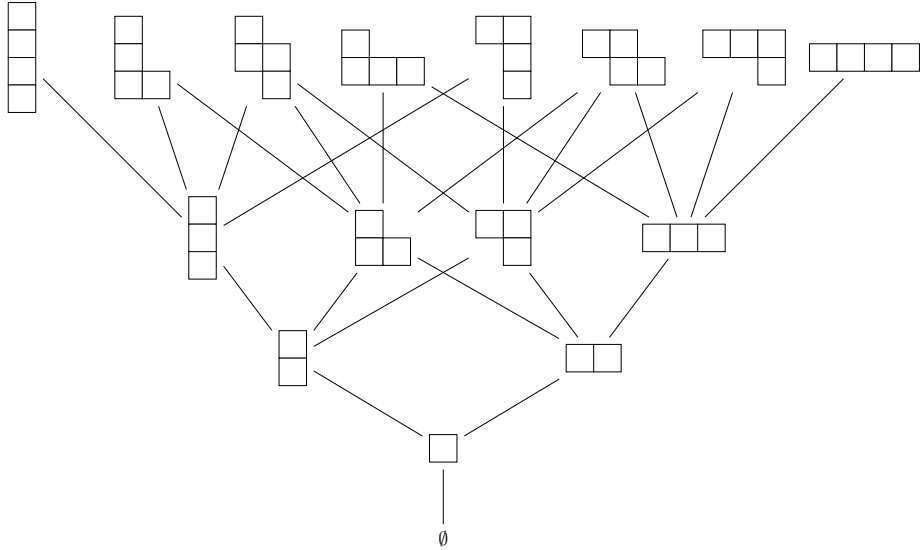
\begin{figure}[h!]\scalebox{.8}{
\begin{tikzpicture}
\node (0) at (5,0){$\emptyset$};
\node (11) at (5,1.5){$\gyoung(;)$};
\node (21) at (7.5,3){$\gyoung(;;)$};
\node (22) at (2.5,3){$\gyoung(;,;)$};
\node (31) at (9,5){$\gyoung(;;;)$};
\node (32) at (6,5){$\gyoung(;;,:;)$};
\node (33) at (4,5){$\gyoung(;,;;)$};
\node (34) at (1,5){$\gyoung(;,;,;)$};
\node (41) at (12,8){$\gyoung(;;;;)$};
\node (42) at (10,8){$\gyoung(;;;,::;)$};
\node (43) at (8,8){$\gyoung(;;,:;;)$};
\node (44) at (6,8){$\gyoung(;;,:;,:;)$};
\node (45) at (4,8){$\gyoung(;,;;;)$};
\node (46) at (2,8){$\gyoung(;,;;,:;)$};
\node (47) at (0,8){$\gyoung(;,;,;;)$};
\node (48) at (-2,8){$\gyoung(;,;,;,;)$};

\draw (0) to (11); 
\draw (11) to (21);
\draw (11) to (22);
\draw (21) to (31);
\draw (21) to (32);
\draw (21) to (33);
\draw (22) to (32);
\draw (22) to (33);
\draw (22) to (34); 
\draw (31) to (41);
\draw (31) to (42);
\draw (31) to (43);
\draw (31) to (45);
\draw (32) to (42);
\draw (32) to (43);
\draw (32) to (44);
\draw (32) to (46);
\draw (33) to (43);
\draw (33) to (45);
\draw (33) to (46);
\draw (33) to (47);
\draw (34) to (44);
\draw (34) to (46);
\draw (34) to (47);
\draw (34) to (48);

\end{tikzpicture}}
\caption{\label{fig4}Vertices of $\mathcal{Z}$ of rank $0$ to $4$.}

\end{figure}

\subsection{Geometric realization}\label{GeomRealizResult}
In \cite{gnedin2006coherent}, Gnedin and Olshanski suggested a possible geometric realization for the graph $\mathcal{Z}$.  This description is based on a topological space consisting of pairs of disjoint open sets in $[0,1]$, called oriented paintboxes.
\begin{defn}{\cite[Section 6.1]{gnedin2006coherent}}
An oriented paintbox is an ordered pair of disjoint open subsets of $]0,1[$.

The metric space $\mathcal{U}^{(2)}$ is the space 
$$\big\lbrace (U_{\uparrow},U_{\downarrow})\vert U_{\uparrow}\text{ and } U_{\downarrow}\text{ disjoint open subsets of } ]0,1[\big\rbrace$$ 
equipped with the distance $d$ between two oriented paintboxes $(U_{\uparrow},U_{\downarrow})$ and $(V_{\uparrow},V_{\downarrow})$ given by
$$d\big((U_{\uparrow},U_{\downarrow}),(V_{\uparrow},V_{\downarrow})\big)=\max\big(d_{\mathrm{Haus}}(U_{\uparrow}^{c},V_{\uparrow}^{c}),d_{\mathrm{Haus}}(U_{\downarrow}^{c},V_{\downarrow}^{c})\big),$$
where $X^{c}$ denotes $[0,1]\setminus X$ for $X\subset [0,1]$ and $d_\mathrm{Haus}$ denotes the Hausdorff distance.
\end{defn} 
From the definition of the metric, $\big(U_{\uparrow}(j),U_{\downarrow}(j)\big)_{j\geq 1}$ converges to $(V_{\uparrow},V_{\downarrow})$ if and only if, for each $\epsilon>0$, the two following conditions occur:
\begin{itemize}
\item for $j$ large enough, the numbers of connected components of size larger than $\epsilon$ in $U_{\uparrow}(j)$ (resp. $U_{\downarrow}(j)$) and in $V_{\uparrow}$ (resp. $V_{\downarrow}$) are equal;
\item the boundary points of the connected components of size larger than $\epsilon$ in $U_{\uparrow}(j)$ (resp. $U_{\downarrow}(j)$) converge to the ones of the connected components of size larger than $\epsilon$ in $V_{\uparrow}$ (resp. $V_{\downarrow}$).
\end{itemize}
In particular, $(U_{\uparrow}(j),U_{\downarrow} (j))_{j\geq 1}$ converges to $(\emptyset,\emptyset)$ if and only if the size of the largest connected component in $U_{\uparrow}(j)\cup U_{\downarrow}(j)$ tends to $0$.

There exists a map from $\mathcal{Z}$ to $\mathcal{U}^{(2)}$ which is injective on each level $\mathcal{Z}_{n}$ for $n\geq 0$. Namely, for each word $w=w_{1}\cdots w_{n-1}$ in $\mathcal{Z}_{n}$, set
\begin{equation}\label{definitionU(w)} 
U_{\uparrow}(w):=\left(\bigcup_{\substack{1\leq i\leq n-1\\ w_{i}=+}}\left[\frac{i-1}{n-1},\frac{i}{n-1}\right]\right)^{\mathrm{o}}
\end{equation}
and 
\begin{equation}\label{definitionU(w)bis}
U_{\downarrow}(w):=\left(\bigcup_{\substack{1\leq i\leq n-1\\ w_{i}=-}}\left[\frac{i-1}{n-1},\frac{i}{n-1}\right]\right)^{\mathrm{o}},
\end{equation}
where $X^{\mathrm{o}}$ denotes the interior of a set $X$. Define the map $g_{\mathcal{Z}}$ from $\mathcal{Z}$ to $\mathcal{U}^{(2)}$ by the formula $g_{\mathcal{Z}}(w)=\big(U_{\uparrow}(w),U_{\downarrow}(w)\big)$. 

Gnedin and Olshanski \cite{gnedin2006coherent} proved that  $\partial_{\min}\mathcal{Z}$ is homeomorphic to $\mathcal{U}^{(2)}$ by exhibiting a natural homeomorphism $\Phi_{\mathcal{Z}}$ from $\mathcal{U}^{(2)}$ to $\partial_{\min}\mathcal{Z}$, which is described in \cref{paintboxDef}. This suggests that $(\mathcal{U}^{(2)},\Phi_{\mathcal{Z}},g_{\mathcal{Z}})$ should be a geometric realization of $\mathcal{Z}$ in the sense of \cref{geomreal}. This conjecture has been first given in \cite{gnedin2006coherent} and is the first result of this paper.
\begin{thm}\label{mainresult1}
$(\mathcal{U}^{(2)},\Phi_{\mathcal{Z}},g_{\mathcal{Z}})$ is a geometric realization of $\mathcal{Z}$. In particular, $\partial_{\min}\mathcal{Z}=\partial_{M}\mathcal{Z}$.
\end{thm}
Following \cref{geomreal}, \cref{mainresult1} is proven by showing that for any sequence $(\mu_{n})_{n\geq 1}$ of vertices of increasing rank in $\mathcal{Z}$ and any element $(U_{\uparrow},U_{\downarrow})$ in $\mathcal{U}^{(2)}$, 
\begin{equation}\label{eqMainResult1}
\left(g_{\mathcal{Z}}(\mu_{n})\xrightarrow[n\rightarrow \infty]{}  (U_{\uparrow},U_{\downarrow})\right)\implies \left(\mu_{n}\xrightarrow[n\rightarrow \infty]{} \Phi_{\mathcal{Z}}(U_{\uparrow},U_{\downarrow})\right).
\end{equation}
The convergence on the right hand side is understood as the convergence in the topology of $\mathcal{Z}$ described in \cref{SectionProbaGradedGraph}, which is equivalent to the convergence of $\mathbb{P}_{\mu_{n}}(\Gamma_{\tau})$ towards $\mathbb{P}_{\Phi_{\mathcal{Z}}(U_{\uparrow},U_{\downarrow})}(\Gamma_{\tau})$ for all cylinders $\Gamma_{\tau}$, $\tau\in\Gamma_{f}(\mathcal{Z})$. The proof of the implication $\eqref{eqMainResult1}$ is done in \cref{SectionmartinBoundary} and is based on several intermediate results which are given in \cref{SectionintroFamilyXi,SectionCombinatorics,SectionAsymptotics}.
\subsection{Relation with the Young graph}\label{RelYouGra}
As we already explained in the introduction, the graded graph $\mathcal{Z}$ has an algebraic meaning which allows to relate it to several other graded graphs of special interest in probability. We will mainly focus on the relation between the graph $\mathcal{Z}$  and the Young graph $\mathcal{Y}$. In this paragraph, we only need the fact that the vertices of $\mathcal{Y}$ are indexed by partitions; refer to \cref{presentationYoungGraph} for a precise definition of $\mathcal{Y}$. Let us first describe the algebraic background of these two graphs, which relies on the ring $QSym$ of quasisymmetric functions and the ring $Sym$ of symmetric functions.

Let $X=\lbrace X_{1},X_{2},\ldots\rbrace$ be an infinite collection of commuting variables. On the one hand, $QSym$ is the ring of polynomials in $X$ which are shift invariant: namely, if $P\in QSym$ and $(\lambda_{1},\ldots,\lambda_{r})$ is a composition, then the coefficient of the monomial $X_{i_{1}}^{\lambda_{1}}\cdots X_{i_{r}}^{\lambda_{r}}$ in $P$ is the same for all increasing sequences $(i_{1}<i_{2}<\cdots<i_{r})$ of indices. The ring $QSym$ is graded, with the grading given by the degree of the polynomials, and each of its homogeneous bases is indexed by compositions. On the other hand, $Sym$ is the ring of polynomials in $X$ which are invariant under permutation of the variables. This ring is also graded by the degree of the polynomials and each homogeneous basis of $Sym$ is indexed by partitions. It is readily seen from the above definitions that $Sym$ is a subring of $QSym$.

The graphs $\mathcal{Y}$ and $\mathcal{Z}$ are respectively related to the Schur basis $\lbrace s_{\pi}\rbrace_{\pi\in \mathcal{Y}}$ of $Sym$ and to the fundamental basis $\lbrace F_{\lambda}\rbrace_{\lambda\in\Lambda}$ of $QSym$, where $s_{\pi}$ is the Schur function with respect to the partition $\pi\in \mathcal{Y}$ and for a composition $\lambda\in\Lambda_{n}$, $F_{\lambda}$ is defined as
$$F_{\lambda}(X_{1},X_{2},\ldots)=\sum_{\substack{i_{1}\leq i_{2}\leq \ldots\leq i_{n}\\ i_{j}<i_{j+1}\text{ if }j\in D_{\lambda}}}X_{i_{1}}\cdots X_{i_{n}}.$$
Both bases are among the most important ones in the theory of symmetric and quasisymmetric functions, and they have many applications in combinatorics, representation theory and probability (see for example \cite[Part I]{macdonald1998symmetric} for the theory of Schur functions and  \cite[Chapter 7.19]{stanley2011enumerative} for the theory of fundamental quasisymmetric functions).

The relation between the graph $\mathcal{Z}$ and the basis $\lbrace F_{\lambda}\rbrace_{\lambda\in\Lambda}$ is given by the following equality: for $\lambda\in \Lambda$,
$$F_{\lambda}F_{(1)}=\sum_{w(\lambda)\nearrow w(\mu)}F_{\mu},$$
where the relation $w(\lambda)\nearrow w(\mu)$ is the one given by the edge structure of $\mathcal{Z}$. As a consequence of the above equality, we can show that solving the equation \eqref{recursion} for $\mathcal{Z}$ is equivalent to finding the linear maps $\phi:QSym\longrightarrow \mathbb{R}$ which are non-negative on $\lbrace F_{\lambda}\rbrace_{\lambda\in\Lambda}$ and such that 
\begin{equation}\label{recursionbis}
\phi(F_{\lambda}F_{(1)})=\phi(F_{\lambda})
\end{equation}
 for all $\lambda\in\Lambda$. The latter fact is central in the description of the minimal boundary of $\mathcal{Z}$ given by Gnedin and Olshanski in \cite{gnedin2006coherent}.

The Young graph satisfies a similar relation with the Schur basis $\lbrace s_{\pi}\rbrace_{\pi\in \mathcal{Y}}$. Namely, for $\pi\in\mathcal{Y}$,
$$s_{\pi}s_{(1)}=\sum_{\pi\nearrow \pi'}s_{\pi'},$$
where $\pi\nearrow \pi'$ means that there is an edge between $\pi$ and $\pi'$ in $\mathcal{Y}$. Moreover, solving the equation \eqref{recursion} for $\mathcal{Y}$ is also equivalent to finding the linear maps $\phi:Sym\longrightarrow \mathbb{R}$ which are non-negative on $\lbrace s_{\pi}\rbrace_{\pi\in\mathcal{Y}}$ and such that 
\begin{equation}\label{recursionter}
\phi(s_{\pi}s_{(1)})=\phi(s_{\pi})
\end{equation}
 for all $\pi\in\mathcal{Y}$. Thoma proved in \cite{thoma1964unzerlegbaren} that the minimal boundary $\partial_{\min}\mathcal{Y}$ of the Young graph is homeomorphic to the set 
$$\Delta^{(2)}:=\lbrace (\alpha_{1}\geq \alpha_{2}\geq \ldots,\beta_{1}\geq \beta_{2}\geq \ldots)\vert \alpha_{i}\geq 0,\beta_{i}\geq 0, \sum_{i=1}^{\infty} \alpha_{i}+\beta_{i}\leq 1\rbrace,$$ 
with the topology of the pointwise convergence. Vershik and Kerov later showed \cite{vershik1981asymptotic} that the minimal boundary and the Martin boundary of $\mathcal{Y}$ are equal.
 
Since $s_{(1)}= F_{(1)}=\sum X_{i}$ and since any Schur function admits an expansion in the fundamental basis with non-negative coefficients (see \cite[Theorem 7.19.7]{stanley2011enumerative}), any solution of \eqref{recursionbis} for $\lbrace F_{\lambda}\rbrace_{\lambda\in\Lambda}$ gives also a solution of \eqref{recursionter} for $\lbrace s_{\pi}\rbrace_{\pi\in\mathcal{Y}}$, yielding a map from $\mathcal{H}_{\infty}(\mathcal{Z})$ to $\mathcal{H}_{\infty}(\mathcal{Y})$. It has been proved in \cite[Corollary 32]{gnedin2006coherent} that this map restricts to a continuous and surjective map $\Psi$ from $\partial_{\min}\mathcal{Z}$ to $\partial_{\min}\mathcal{Y}$. The map $\Psi$ has a simple meaning in terms of the geometric description of both boundaries: it sends an element $(U_{\uparrow},U_{\downarrow})$ of $\mathcal{U}^{(2)}$ to the double decreasing sequence $(\alpha_{n},\beta_{n})_{n\geq 1}$, where $(\alpha_{n})_{n\geq 1}$ (resp.~$(\beta_{n})_{n\geq 1}$) is the sequence of lengths of the interval components of $U_{\uparrow}$ (resp.~$U_{\downarrow}$) in the decreasing order.

In the following theorem, we give a combinatorial interpretation to the map $\Psi$ by establishing directly a map from paths on $\mathcal{Z}$ to paths on $\mathcal{Y}$, such that the induced action on $\partial_{\min}\mathcal{Z}$ is exactly $\Psi$.
\begin{thm}\label{mainResultYoung}
There is a surjective map $\Pi:\Gamma(\mathcal{Z})\longrightarrow \Gamma(\mathcal{Y})$ such that:
\begin{itemize}
\item if $\gamma\in\Gamma(\mathcal{Z})$ and $k\leq l(\gamma)$, then $\Pi(\gamma_{\downarrow k})=\Pi(\gamma)_{\downarrow k}$;
\item the pushforward by $\Pi$ of a harmonic measure on $\Gamma(\mathcal{Z})$ is a harmonic measure on $\Gamma(\mathcal{Y})$;
\item $\Pi$ induces the surjective map $\Psi:\partial_{\min}\mathcal{Z}\longrightarrow \partial_{\min}\mathcal{Y}$. Namely, if $\gamma_{\omega}$ is a random path with distribution $\omega\in \partial_{\min}\mathcal{Z}$, then $\Pi(\gamma_{\omega})$ is a random path with distribution $\Psi(\omega)$.

\end{itemize}
\end{thm} 
Note that this surjective map does not come from a map from the vertices of $\mathcal{Z}$ to the vertices of $\mathcal{Y}$. The construction of the map $\Pi$ is done in \cref{SectionRelationYong} using the RSK algorithm. In \cite{kerov1986characters}, Kerov and Vershik have given a representation of $\partial_{\min}\mathcal{Y}$ using a slightly modified version of the RSK algorithm. Their method is similar to the oriented paintbox construction of Gnedin and Olshanski, and an alternative proof of \cref{mainResultYoung} may be given using the results of \cite{kerov1986characters}. An interesting question is to push further this similarity. In \cite{sniady2014robinson}, \'{S}niady has been able to invert the map given in \cite{kerov1986characters} by applying the jeu de taquin procedure on infinite standard Young tableaux: this suggests that it would be possible to invert the map $\Phi_{\mathcal{Z}}$ of \cref{mainresult1} using a similar procedure.
\subsection{Law of large numbers and Central Limit Theorem}\label{lawLargeNumberCentralLimit}
We are considering a law of large numbers and a central limit theorem for random vertices of $\mathcal{Z}$ coming from a path following a Gibbs measure. The study of the shape of such random vertices is done by mapping a word of $\mathcal{Z}$ to a continuous function on $[0,1]$ which measures the local densities of $+$ and $-$ in the word. This map has been introduced in \cite{oshanin2004random}.

Consider the space $\mathcal{C}([0,1],\mathbb{R})$ of continuous functions on the interval $[0,1]$. For each element $(U_{\downarrow},U_{\downarrow})\in\mathcal{U} 
^{(2)}$, we define the function $f_{(U_{\uparrow},U_{\downarrow})}\in\mathcal{C}([0,1],\mathbb{R})$ with the formula
$$f_{(U_{\uparrow},U_{\downarrow})}(t)=\int_{0}^{t}\mathbf{1}_{U_{\uparrow}}(u)-\mathbf{1}_{U_{\downarrow}}(u)du.$$
In particular, if $w=w_{1}\cdots w_{n-1}$ is a word in $\mathcal{Z}_{n}$ and $0\leq s<t\leq 1$, then the quantity $f_{g_{\mathcal{Z}}(w)}(t)-f_{g_{\mathcal{Z}}(w)}(s)$ describes the proportion of $+$ and $-$ in $w$ between $w_{\lfloor sn\rfloor}$ and $w_{\lfloor tn\rfloor}$.
\begin{thm}\label{lawLageNumbers}
Let $(U_{\uparrow},U_{\downarrow})\in\mathcal{U}^{(2)}$, and let $\gamma$ be a random path following the law $\Phi_{\mathcal{Z}}(U_{\uparrow},U_{\downarrow})$. Then, almost surely,
$$f_{g_{\mathcal{Z}}(\gamma_{k})}\xrightarrow[k\rightarrow \infty]{} f_{(U_{\uparrow},U_{\downarrow})},$$
with respect to the supremum norm in $\mathcal{C}([0,1],\mathbb{R})$.

If $(U_{\downarrow},U_{\uparrow})=(\emptyset,\emptyset)$, then $f_{(\emptyset,\emptyset)}=0$ and the following convergence in law holds in
$\big(\mathcal{C}([0,1],\mathbb{R}),\Vert.\Vert_{\infty}\big)$:
$$\sqrt{n}f_{g_{\mathcal{Z}}}(\gamma_{k})\xrightarrow[k\rightarrow \infty]{} \frac{1}{\sqrt{3}}\mathcal{B},$$
where $\mathcal{B}$ is a standard Brownian motion on $[0,1]$.
\end{thm}
It would be very interesting to have a central limit theorem for all Gibbs measure $\Phi_{\mathcal{Z}}(U)$ for $U\in\mathcal{U}^{(2)}$. Whereas the central limit result of \cref{lawLageNumbers} can be easily extended in the case where $U$ has a finite number of interval components, the situation becomes more cumbersome when the number of interval components becomes infinite (see \cite{barbour2008small} for an analogous problem).
\section{Oriented paintbox construction and minimal boundary of $\mathcal{Z}$}\label{sectionOrientedPaintbox}
This section is devoted to the description of the minimal boundary of $\mathcal{Z}$ by Gnedin and Olshanski. This description is based on a bijection between finite paths on $\mathcal{Z}$ and permutations: this bijection, which has been found by Viennot in \cite{viennot1983maximal}, is the main tool in the proofs of this paper.
\subsection{Arrangements and paths on $\mathcal{Z}$}\label{EquivalencePathsArrangements}
Let $S_{n}$ denote the set of permutations of $n$ elements. As suggested in the introduction, a permutation $\sigma$ of $n$ is always written as a word $\sigma(1)\cdots\sigma(n)$ in $\lbrace 1,\ldots,n\rbrace$, with each integer appearing exactly once. We will also assume the existence of a permutation of $0$ elements consisting of the empty word, in order to simplify later statements.

Just like for words in $A_{2}^{*}$, we can define a subword order on the set of permutations: namely, a permutation $\sigma'$ of $n-1$ is a subword of $\sigma$ if there exist $1\leq i_{1}<i_{2}<\cdots<i_{n-1}\leq n$ such that $\sigma'=\sigma_{i_{1}}\cdots\sigma_{i_{n-1}}$. Since $\sigma'$ is a word in $\lbrace 1,\ldots,n-1\rbrace$, the only possibility is to discard the letter $n$ in $\sigma$. The only such permutation is denoted by $\sigma_{\downarrow}$. For example, if $\sigma=35728146$ then $\sigma_{\downarrow}=3572146$. Iterating the operation $\downarrow$ yields a sequence of permutations $(\sigma_{\downarrow 0},\sigma_{\downarrow 1},\ldots,\sigma_{\downarrow n-1}=\sigma_{\downarrow},\sigma_{n}=\sigma)$ such that $(\sigma_{\downarrow i})_{\downarrow}=\sigma_{\downarrow i-1}$. The meaning of $\sigma_{\downarrow k}$ is straightforward in the word description of $\sigma$: $\sigma_{\downarrow k}$ is exactly the word $\sigma$ where all integers larger than $k$ have been erased. For example, $(35728146)_{\downarrow 4}=(3214)$.
\begin{defn}\label{definitionCoherenceArragement}
A sequence of permutations $(\sigma_{1},\dots,\sigma_{l})$ with $l\in\mathbb{N}\cup\lbrace\infty\rbrace$ is called coherent when $\sigma_{i}=(\sigma_{i+1})_{\downarrow}$ for all $1\leq i\leq l$. 

An arrangement is an infinite sequence of permutations which is coherent.
\end{defn}
For example, the following sequence is the beginning of an arrangement
$$\big((1),(21),(231),(2341),(52341),\ldots\big).$$
The set of arrangements is denoted by $\mathbb{A}$.   For each $k\geq 0$, there is a map $p_{k}:\mathbb{A}\longrightarrow S_{k}$ which sends $(\sigma_{k})_{k\geq 0}$ to $\sigma_{k}$. $\mathbb{A}$ is considered with the initial topology with respect to the set of maps $\lbrace p_{k}\rbrace_{k\geq 0}$ and with the corresponding borelian $\sigma-$algebra. Thus, by Kolmogorov's extension theorem, any random variable $(\sigma_{k})_{k\geq 1}$ on $\mathbb{A}$ is uniquely determined by the law of its finite-dimensional projections $(\sigma_{1},\ldots,\sigma_{n})$ for all $n\geq 1$.

For $\sigma\in S_{n}$ and $0\leq i\leq n$, denote by $\sigma^{+i}$ the permutation $\sigma(1)\cdots\sigma(i)(n+1)\sigma(i+1)\cdots\sigma(n)$ (in particular, $\sigma^{+0}=(n+1)\sigma(1)\cdots\sigma(n)$ and $\sigma^{+n}=\sigma(1)\cdots\sigma(n)(n+1)$). Then, for each permutation $\sigma\in S_{n}$, the set of permutations in $S_{n+1}$ whose subword in $S_{n}$ is $\sigma$ is exactly the set $\lbrace \sigma^{+i}\rbrace_{0\leq i\leq n}$. In particular, for a permutation $\sigma\in S_{n}$, there is a unique $0\leq i_{n-1}\leq n-1$ such that $\sigma=(\sigma_{\downarrow})^{+i}$. Iterating this property yields that $\sigma$ is uniquely determined by a sequence $(i_{1},\ldots,i_{n-1})$, with $0\leq i_{j}\leq j$; the expression for $\sigma$ is then given by $\sigma=\left(\cdots ((1)^{+i_{1}})^{+i_{2}}\cdots\right)^{+i_{n-1}}$.

The link between permutations and paths on $\mathcal{Z}$ comes from the notion of ascent and descent of a permutation.
\begin{defn}
Let $\sigma$ be a permutation of $n$ elements and let $1\leq i\leq n-1$. We say that $\sigma$ has an ascent at $i$ when $\sigma(i)<\sigma(i+1)$ and that $\sigma$ has a descent at $i$ when $\sigma(i)>\sigma(i+1)$.

The descent word of the permutation $\sigma$ is the word $w(\sigma)=w_{1}\cdots w_{n-1}$ in $A_{2}^{*}$ such that $w_{i}=+$ if $i$ is an ascent of $\sigma$ and $w_{i}=-$ otherwise.
\end{defn}
For $w\in \mathcal{Z}$, the set of permutations with descent word $w$ is denoted by $D_{w}$. Let $\sigma$ be a permutation of $n$ elements and $0\leq i\leq n$; write $w_{1}\cdots w_{n-1}$ the descent word of $\sigma$ and $w'_{1}\cdots w'_{n}$ the one of $\sigma^{+i}$. Write $\sigma_{\leq i}$ (resp. $\sigma_{>i}$) to denote the word $\sigma$ restricted to its $i$ first symbols (resp. $n-i$ last symbols). Since $\sigma^{+i}$ is the permutation obtained from $\sigma$ by inserting $n+1$ between $\sigma(i)$ and $\sigma(i+1)$, the descent word of $\sigma^{+i}$ is related to the one of $\sigma$ as follows:
\begin{itemize}
\item since $\sigma^{+i}_{\leq i}=\sigma_{\leq i}$, $w_{j}=w'_{j}$ for $j<i$;
\item since $\sigma^{+i}(i+1)=n+1$, $i$ is necessarily an ascent of $\sigma^{+i}$ and $i+1$ is necessarily a descent of $\sigma^{+i}$, and thus $w'_{i}=+$ and $w'_{i+1}=-$;
\item since $\sigma^{+i}_{>i+1}=\sigma^{+i}_{>i}$, $w'_{j+1}=w_{j}$ for $i+1\leq j\leq n-1$.
\end{itemize} 
Thus, 
$$\begin{cases}w(\sigma^{+0})=-w_{1}\cdots w_{n-1}=w(\sigma)^{+0},\\w(\sigma^{+n})=w_{1}\cdots w_{n-1}+=w(\sigma)^{+n}, and \\w(\sigma^{+i})=w_{1}\cdots w_{i}+- w_{i+1}\cdots w_{n-1}= w(\sigma)^{+i} \text{ for }1\leq i\leq n-1,\\
\end{cases}$$
where we refer to \cref{construcGraphZ} for the definition of the word $w^{+i}$ given a word $w$ in $A_{2}^{*}$. Moreover, writing a permutation $\sigma$ as $\left(\cdots ((1)^{+i_{1}})^{+i_{2}}\cdots\right)^{+i_{n-1}}$ yields a unique path $\gamma(\sigma)=(\gamma_{0},\gamma_{1},\ldots,\gamma_{n})$ on $\mathcal{Z}$ such that $\gamma_{0}=*,\gamma_{1}=\emptyset$, and $\gamma_{j}=\gamma_{j-1}^{+i_{j-1}}$ for $j\geq 2$. By construction, $\gamma_{i}=w(\sigma_{\downarrow i})$ for $0\leq i\leq n$. An example of such correspondence is displayed in \cref{equivPathPerm}.

The map $\sigma\mapsto\gamma(\sigma)$ from $S_{n}$ to the set of paths of length $n$ on $\mathcal{Z}$ is clearly injective and thus bijective, since both sets have same cardinality. Since an infinite path on $\mathcal{Z}$ can be seen as a sequence of finite paths $(\tau_{i})_{i\geq 0}$ such that each $\tau_{i}$ has length $i$ and $(\tau_{i})_{\downarrow i-1}=\tau_{i-1}$, infinite paths are described by arrangements as follows.
\begin{thm*}{\cite[Proposition 2.4]{viennot1983maximal}}
The map 
$$\Omega:\left\lbrace \begin{matrix}
\mathbb{A}&\longrightarrow &\Gamma_{\infty}(\mathcal{Z})\\
(\sigma_{n})_{n\geq 0}&\mapsto&(w(\sigma_{n}))_{n\geq 0}
\end{matrix}\right.,$$
is bijective and for all $k\geq 1$, $\Omega((\sigma_{n})_{n\geq 0})_{\downarrow k}=\gamma(\sigma_{k})$.

\end{thm*}
In particular, any finite (resp.~infinite) random path on $\mathcal{Z}$ yields a random permutation (resp.~random arrangement) by the above maps.
\subsection{Harmonic measure in terms of permutations}\label{harmonicmeasurePermutations}
The equivalence between paths on $\mathcal{Z}$ and permutations allows to interpret harmonic measures in terms of random permutations. By \cref{harmMeasure}, a probability measure $\mathbb{P}$ on $\Gamma(\mathcal{Z})$ is harmonic when, for any finite path $\tau$, the probability $\mathbb{P}(\Gamma_{\tau})$ only depends on the last vertex of $\tau$. Therefore, by equivalence between finite paths on $\mathcal{Z}$ and permutations, finite (resp.~infinite) harmonic measures $\omega$ on $\mathcal{Z}$ are in bijection with random permutations (resp.~random arrangements) $\sigma_{\omega}$ such that for all $\sigma\in S_{k}$, $\mathbb{P}\big((\sigma_{\omega})_{\downarrow k}=\sigma\big)$ only depends on $w(\sigma)$.

For $w\in \mathcal{Z}$ the measure $\mathbb{P}_{w}$ is the uniform measure on all paths arriving at $w$; thus, the law of the corresponding random permutation $\sigma_{w}$ is the uniform law on all permutations with descent set $w$. For $k\geq 1$, let $w,w'$ be vertices of $\mathcal{Z}$ with $\rho(w')=k$ and $\rho(w)\geq \rho (w')$; then, the Martin kernel $K_{w'}(w)$, which is equal to $\mathbb{P}_{w}(\Gamma_{\tau})$ for any path $\tau$ ending at $w'$, is also equal to $\mathbb{P}\big((\sigma_{w})_{\downarrow k}=\sigma\big)$, where $\sigma$ is any permutation such that $w(\sigma)=w'$.

There is a convenient way to describe this uniform law with the ribbon Young diagram associated to $w$. A standard filling of a ribbon Young diagram of size $n$ is a filling of its cells with integers from $1$ to $n$, such that the filling is increasing from left to right along the rows and from bottom to top along the columns. A ribbon Young diagram $\lambda$ with a standard filling is called a standard ribbon Young tableau with shape $\lambda$, and the set of standard ribbon Young tableaux with shape $\lambda$ is denoted by $T(\lambda)$. Any standard ribbon Young tableau yields a permutation by reading the content of the consecutive cells, starting at the upper left cell. See \cref{fig2} for an example of standard ribbon Young tableau with its corresponding permutation.

\begin{figure}[!h]
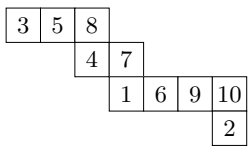

\gyoung(;3;5;8,::;4;7,:::;1;6;9;<10>,::::::;2)
\caption{\label{fig2}Standard filling of the ribbon Young diagram $(3,2,4,1)$ yielding the permutation $(3,5,8,4,7,1,6,9,10,2)$.}
\end{figure}
The rules of a standard filling imply that for any standard ribbon Young tableau of shape $\lambda$, the corresponding permutation has descent word $w(\lambda)$; moreover, it is readily seen that for any vertex $w $ of $\mathcal{Z}$, there is a bijection between $D_{w}$ and $T\big(\lambda(w)\big)$. Therefore, the random permutation $\sigma_{w}$ can be obtained from a uniform random standard ribbon Young tableau of shape $\lambda(w)$. This fact is used in the proof of \cref{mainresult1}.

\subsection{Oriented paintbox construction} \label{paintboxDef}
The minimal boundary of $\mathcal{Z}$ is described using the bijection between the set of infinite paths on $\mathcal{Z}$ and the set of arrangements. This description is called the oriented paintbox construction, and it gives for each element $U$ in $\mathcal{U}^{(2)}$ a random arrangement $\big(\sigma_{U}(n)\big)_{n\geq 1}$ as follows.
\begin{defn}\label{paintbox}{\cite[Definition 24]{gnedin2006coherent}}
Let $U=(U_{\uparrow},U_{\downarrow})$ be an oriented paintbox and let $(x_{1},x_{2},\ldots)$ be a sequence of distinct elements of  $[0,1]$. For each $k\geq 1$, the permutation $\sigma_{U}(x_{1},\ldots,x_{k})$ is defined by the following rule: $i$ is left to $j$ in the word $\sigma_{U}(x_{1},\ldots,x_{k})$ if and only if one of the three following situations arises:
\begin{itemize}
\item $x_{i}$ and $x_{j}$ are not in the same connected component of $U$ and $x_{i}<x_{j}$;
\item $x_{i}$ and $x_{j}$ are in the same connected component of $U_{\uparrow}$ and $i<j$;
\item $x_{i}$ and $x_{j}$ are in the same connected component of $U_{\downarrow}$ and $j<i$.
\end{itemize}  
For each $k\geq 1$, the random permutation $\sigma_{U}(k)$ of $k$ elements associated to the oriented paintbox $U$ is the random variable $\sigma_{U}(X_{1},\ldots,X_{k})$, where $(X_{1},X_{2},\ldots)$ is a family of independent uniform variables on $[0,1]$. The random arrangement $\sigma_{U}$ associated to the oriented paintbox $U$ is the sequence $(\sigma_{U}(1),\sigma_{U}(2),\ldots)$.
\end{defn}
The construction of $\sigma_{U}(X_{1},\ldots,X_{k})$ from $(X_{1},\ldots,X_{k})$ and $U\in\mathcal{U}^{(2)}$ is almost surely well-defined. If $U=(\emptyset,\emptyset)$, $\sigma_{(\emptyset,\emptyset)}(X_{1},\ldots,X_{k})$ is just the permutation associated to the ranking $(X_{i_{1}}<X_{i_{2}}<\cdots<X_{i_{k}})$. In particular, for each $k\geq 1$, the random variable $\sigma_{(\emptyset,\emptyset)}(k)$ has the uniform distribution on $S_{k}$. 

The next theorem is due to Gnedin and Olshanski in \cite{gnedin2006coherent} (based on an important work of Jacka and Warren \cite{jacka2007random}) and identifies $\mathcal{U}^{(2)}$ with the minimal boundary of the graded graph $\mathcal{Z}$.
\begin{thm}\label{minimalBoundary}{\cite[Theorem 44]{gnedin2006coherent}}
Let $U$ be an oriented paintbox. Through the bijection $\Omega$ between arrangements and infinite paths on $\mathcal{Z}$,
the random arrangement $\sigma_{U}$ comes from an extreme harmonic measure $\Phi_{\mathcal{Z}}(U)$ on the set of infinite paths of $\mathcal{Z}$.

Moreover, the map $\Phi_{\mathcal{Z}}$ from $\mathcal{U}^{(2)}$ to $\partial_{\min}\mathcal{Z}$ mapping $U$ to $\Phi_{\mathcal{Z}}(U)$ is a homeomorphism.
\end{thm} 
In particular, for each $k\geq 1$ and $\sigma\in S_{k}$, $\mathbb{P}(\sigma_{U}(k)=\sigma)$ only depends on the descent word $w$ of $\sigma$ and is denoted by $p_{U}(w)$.

\subsection{Sketch of the proof of the geometric realization}\label{ExplanationProof}
Let us explain the proof of \cref{mainresult1}. On the one hand, there is a homeomorphism $\Phi_{\mathcal{Z}}$ from the compact space $\mathcal{U}^{(2)}$ to the minimal boundary of $\mathcal{Z}$; on the other hand, an embedding $g_{\mathcal{Z}}$ of each level of $\mathcal{Z}$ into $\mathcal{U}^{(2)}$ has been introduced in \cref{sectionSummary} with the map $w\mapsto \big(U_{\uparrow}(w),U_{\downarrow}(w)\big)$. Therefore, from \cref{geomreal}, the only missing element to get the complete geometric realization of $\mathcal{Z}$ is the following convergence result: if $\big(w_{n}\big)_{n\geq 1}$ is a sequence of vertices of $\mathcal{Z}$ of increasing rank such that $\big(U_{\uparrow}(w),U_{\downarrow}(w)\big)$ converges in $\mathcal{U}^{(2)}$ to some oriented paintbox $(U_{\uparrow},U_{\downarrow})$, then $w_{n}$ converges to $\Phi_{\mathcal{Z}}(U_{\uparrow},U_{\downarrow})$ in $\hat{\mathcal{Z}}$.

Suppose that $(w_{n})_{n\geq 1}$ is a sequence of vertices with increasing rank. The convergence of $w_{n}$ to $\Phi_{\mathcal{Z}}(U_{\uparrow},U_{\downarrow})$ in $\hat{\mathcal{Z}}$ is equivalent to the convergence in law of $\mathbb{P}_{w_{n}}$ towards $\Phi_{\mathcal{Z}}(U_{\uparrow},U_{\downarrow})$. By the bijection between finite paths on $\mathcal{Z}$ and permutations, the finite harmonic measure $\mathbb{P}_{w_{n}}$ yields a random permutation $\sigma_{w_{n}}$ whose distribution is uniform on the set $D_{w_{n}}$ of permutations with descent word $w_{n}$; similarly, by the bijection between infinite paths on $\mathcal{Z}$ and arrangements, and by the oriented paintbox construction, the infinite harmonic measure $\Phi_{\mathcal{Z}}(U_{\uparrow},U_{\downarrow})$ yields the random arrangement $\big(\sigma_{U_{\uparrow},U_{\downarrow}}(k)\big)_{k\geq 1}$. Therefore, $w_{n}$ converges to $\Phi_{\mathcal{Z}}(U_{\uparrow},U_{\downarrow})$ in $\hat{\mathcal{Z}}$ if and only if for each $k\geq 1$ the random variable $(\sigma_{w_{n}})_{\downarrow k}$ converges in law to the random variable $\sigma_{U_{\uparrow},U_{\downarrow}}(k)$ when $n$ goes to infinity.

Let us briefly explain the proof of the convergence in law of $(\sigma_{w_{n}})_{\downarrow k}$ towards $\sigma_{U_{\uparrow},U_{\downarrow}}(k)$. Let $k\geq 1$ be a fixed integer. Then, $\sigma_{U_{\uparrow},U_{\downarrow}}(k)$ is the random variable $\sigma_{U_{\uparrow},U_{\downarrow}}(X_{1},\ldots,X_{k})$, where $X_{1},\ldots,X_{k}$ are independent random variables on $[0,1]$. It is shown in the next section that the variable $(\sigma_{w_{n}})_{\downarrow k}$ can also be sampled with the oriented paintbox construction as $\sigma_{\tilde{U}(w_{n})}(\xi_{1}^{w_{n}},\ldots,\xi_{k}^{w_{n}})$, where $\tilde{U}(w_{n})$ is a certain oriented paintbox depending on $w_{n}$ (different from the oriented paintbox $U(w_{n})$ associated to $w_{n}$ by the map $g_{\mathcal{Z}}$ in \cref{GeomRealizResult}) and $(\xi_{1}^{w_{n}},\ldots,\xi_{k}^{w_{n}})$ is some sequence of random variables in $[0,1]$. The main fact is that $\tilde{U}(w_{n})$ also converges to $(U_{\uparrow},U_{\downarrow})$ (see \cref{similarPaintbox} in \cref{SectionintroFamilyXi}) and the distribution of $(\xi_{1}^{w_{n}},\ldots,\xi_{k}^{w_{n}})$ becomes close to the one of $(X_{1},\ldots,X_{n})$ (see \cref{generalCase}) when $n$ becomes large. The convergence in law of $\sigma_{\tilde{U}(w_{n})}(\xi_{1}^{w_{n}},\ldots,\xi_{k}^{w_{n}})$ to $\sigma_{U_{\uparrow},U_{\downarrow}}(X_{1},\ldots,X_{k})$ is then deduced from a general theorem on convergence of random oriented paintbox constructions (see \cref{convergencePaintbox} in \cref{SectionintroFamilyXi}). 
The most difficult part is the proof of the approximation of the sequence $(\xi_{i}^{w})_{1\leq i\leq k}$ by a sequence of independent uniform variables on $[0,1]$ for large words $w$. The proof is done in two steps in \cref{SectionAsymptotics,SectionmartinBoundary}; it is based on combinatorial results related to ribbon Young diagrams, which have been obtained in a previous paper \cite{sawtooth} and which are recalled in \cref{SectionCombinatorics}.

\section{The familiy $(\xi^{w}_{i})_{i\geq 1}$}\label{SectionintroFamilyXi}

In this section, $w=w_{1}\cdots  w_{n-1}$ is a fixed word in $A_{2}^{*}$ of length $n-1$ and $\lambda:=\lambda(w)$ is the corresponding ribbon Young diagram, which has size $n$. Note that from \cref{sectionOrientedPaintbox}, the distribution of $\sigma_{w}$ is the same as the distribution of $\sigma_{\lambda}$, where $\sigma_{\lambda}$ is the permutation obtained by reading the cells of a uniformly random standard ribbon Young tableau of shape $\lambda$. The goal of this section is to introduce a family of random variables $(\xi^{w}_{i})_{i\geq 1}$ and an oriented paintbox $\tilde{U}(w)$ such that the random variable $\big(\sigma_{w}\big)_{\downarrow k}$ has the same law as the oriented paintbox construction $\sigma_{\tilde{U}(w)}(\xi_{1}^{w},\ldots,\xi_{k}^{w})$. In order to simplify the notations, we adopt the convention that $w_{0}=w_{n}=\emptyset$. Recall that the cells of $\lambda$ are identified with integers from $1$ to $n$ by ordering them along the ribbon Young diagram, starting at the upper left cell.
\subsection{Combinatorics of descents and ascents} \label{definitionRun+Extreme}
We will introduce several definitions related to a ribbon Young diagram; all these definitions are pictured in \cref{FigExtrem} and \cref{FigSlopes}.
A cell $i\in \llbracket 1,n\rrbracket$ is a peak of $\lambda$ if $w_{i-1}\in \lbrace \emptyset,+\rbrace$ and $w_{i}\in \lbrace \emptyset,-\rbrace$, and $i\in \llbracket 1,n\rrbracket$ is a valley if $w_{i-1}\in \lbrace \emptyset,-\rbrace$ and $w_{i}\in \lbrace \emptyset,+\rbrace$. This definition makes sense if we consider any standard filling $\sigma$ of $\lambda$: $\sigma(i)$ is a local maximum (resp.~minimum) of $\sigma=\sigma(1)\cdots\sigma(n)$ if and only if $i$ is a peak (resp.~valley) of $\lambda$. Let $V$ denote the set of valleys of $\lambda$, let $P$ denote its set of peaks, and let $\mathcal{E}=V\cup P$ denote its set of extreme cells, which are either peaks or valleys. We denote by $\lbrace e_{1}<e_{2}<\ldots<e_{t+1}\rbrace$ the elements of $\mathcal{E}$.

A run $s$ of $\lambda$ is an interval $\llbracket a,b\rrbracket$ of $\llbracket 1,n\rrbracket$ such that $a,b$ are consecutive elements of $\mathcal{E}$. A run $\llbracket a,b\rrbracket$ is called descending if $a\in P$ and ascending if $a\in V$. The runs are ordered by the smallest endpoint of the corresponding interval, which yields a total ordered set $S=\lbrace s_{i}\rbrace_{1\leq i\leq t}$. Thus, each element $s_{i}$ of $S$ corresponds to the interval $\llbracket e_{i};e_{i+1}\rrbracket $ of $\lambda$. In particular, two consecutive runs $s_{i}$ and $s_{i+1}$ overlap on $e_{i+1}$. The length of a run $s_{i}$ is defined as the value $l_{i}=e_{i+1}-e_{i}$. For example if $\lambda=(3,2,1,3,1)$, then $V=\lbrace 1,4,7,10\rbrace$, $P=\lbrace 3,5,9\rbrace$ and  $S=\lbrace \llbracket 1,3\rrbracket,\llbracket 3,4\rrbracket,\llbracket 4,5\rrbracket,\llbracket 5,7\rrbracket,\llbracket 7,9\rrbracket,\llbracket 9,10\rrbracket \rbrace$. The peaks, valleys and runs of $\lambda$ are displayed in \cref{FigExtrem}.

\begin{figure}[h!]
\begin{tikzpicture}
\node(1)[draw, fill=gray!20, minimum height=1cm, minimum width=1 cm] at (0,3){1};
\node(2)[draw, minimum height=1cm, minimum width=1 cm] at (1,3){2};
\node(3)[draw, fill=gray!60, minimum height=1cm, minimum width=1 cm] at (2,3){3};
\node(4)[draw, fill=gray!20, minimum height=1cm, minimum width=1 cm] at (2,2){4};
\node(5)[draw,  fill=gray!60,minimum height=1cm, minimum width=1 cm] at (3,2){5};
\node(6)[draw, minimum height=1cm, minimum width=1 cm] at (3,1){6};
\node(7)[draw, fill=gray!20, minimum height=1cm, minimum width=1 cm] at (3,0){7};
\node(8)[draw, minimum height=1cm, minimum width=1 cm] at (4,0){8};
\node(9)[draw, fill=gray!60, minimum height=1cm, minimum width=1 cm] at (5,0){9};
\node(10)[draw, fill=gray!20, minimum height=1cm, minimum width=1 cm] at (5,-1){10};

\draw[ultra thick] (1.5,3.3)--(1.5,1.7);
\draw[ultra thick] (2.5,2.3)--(2.5,-0.3);
\draw[ultra thick] (4.5,0.3)--(4.5,-1.3);

\draw[ultra thick] ( -0.3,3.5)--(2.3,3.5);
\draw[ultra thick] (1.7,2.5)--(3.3,2.5);
\draw[ultra thick] (2.7,0.5)--(5.3,0.5);

\node (A) at (1,3.5){};
\node (B) at (2.5,2.5){};
\node (C) at (4,0.5){};
\node(D) at (1.5,2.5){};
\node (E) at (2.5,1.5){};
\node (F) at (4.5,-0.5){};

\node(P) at (6,4){Ascending runs};
\node (V)at (0,-1){Descending runs};

\draw [->](P)->(A);
\draw [->](P)->(B);
\draw[->] (P)->(C);
\draw[->] (V)->(D);
\draw[->] (V)->(E);
\draw [->](V)->(F);

\node (V)[draw, fill=gray!20, minimum height=1cm, minimum width=1 cm] at (8,2){};
\node (V')at (9.5,2){Valley};
\node (P)[draw, fill=gray!60, minimum height=1cm, minimum width=1 cm] at (8,0.5){};
\node (P') at (9.5,0.5){Peak};
\end{tikzpicture}
\caption{\label{FigExtrem}Peaks, valleys and runs in $\lambda=(3,2,1,3,1)$.}
\end{figure}
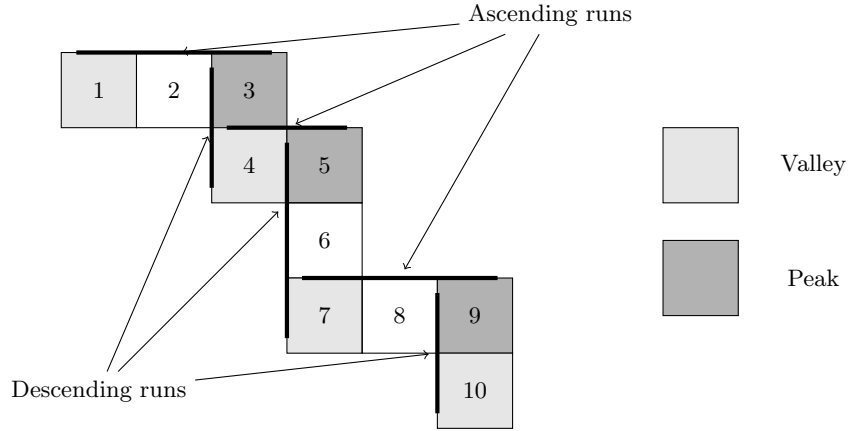
For any cell $i$ of $\lambda$, its slope $\mathfrak{s}(i)=\llbracket x(i)+1,y(i)\rrbracket$ is defined as the maximum subinterval of $\llbracket 1,n\rrbracket$ that contains $i$ and no other peak or valley. The use of the shifted variable $x(i)$ will simplify later formulas. In the previous example, $\mathfrak{s}(2)=\llbracket 2,2\rrbracket$ and $\mathfrak{s}(7)=\llbracket 6,8\rrbracket$. Note that slopes differ from runs: the slope of $i$ is a run with the extreme cells removed if $i$ is not an extreme cell, and the union of two runs with the first and last extreme cells removed when $i$ is an extreme cell.
\begin{figure}[h!]
\begin{tikzpicture}
\node(1)[draw,  minimum height=1cm, minimum width=1 cm] at (0,3){1};
\node(2)[draw,fill=gray!20, minimum height=1cm, minimum width=1 cm] at (1,3){2};
\node(3)[draw,  minimum height=1cm, minimum width=1 cm] at (2,3){3};
\node(4)[draw, minimum height=1cm, minimum width=1 cm] at (2,2){4};
\node(5)[draw, minimum height=1cm, minimum width=1 cm] at (3,2){5};
\node(6)[draw,fill=gray!60, minimum height=1cm, minimum width=1 cm] at (3,1){6};
\node(7)[draw, fill=gray!60, minimum height=1cm, minimum width=1 cm] at (3,0){7};
\node(8)[draw, fill=gray!60, minimum height=1cm, minimum width=1 cm] at (4,0){8};
\node(9)[draw, minimum height=1cm, minimum width=1 cm] at (5,0){9};
\node(10)[draw, minimum height=1cm, minimum width=1 cm] at (5,-1){10};

\node (V)[draw, fill=gray!20, minimum height=1cm, minimum width=1 cm] at (8,2){};
\node (V')at (9.5,2){Slope of $2$};
\node (P)[draw, fill=gray!60, minimum height=1cm, minimum width=1 cm] at (8,0.5){};
\node (P') at (9.5,0.5){Slope of $7$};
\end{tikzpicture}
\caption{\label{FigSlopes}The slopes $\mathfrak{s}(2)$ and $\mathfrak{s}(7)$ in the ribbon Young diagram $\lambda=(3,2,1,3,1)$.}
\end{figure}
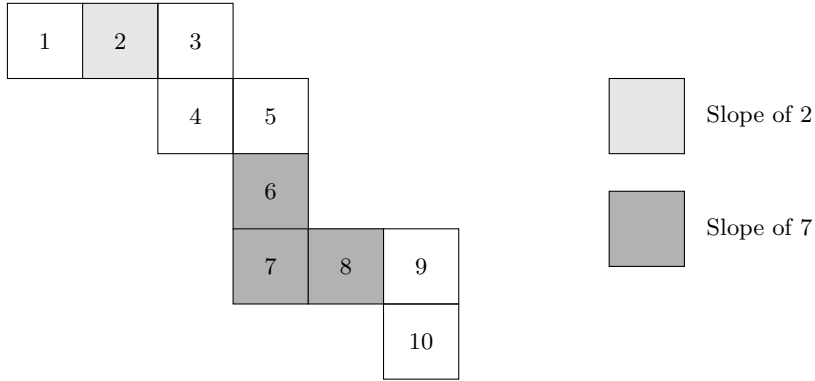
\subsection{Definition of $(\xi^{w}_{i})_{i\geq 1}$}\label{defiXi}
We are now constructing a family of random variables $(\xi^{w}_{i})_{i\geq 1}$ in $[0,1]$ in order to express the random variable $\sigma_{w}$ as an oriented paintbox construction. 
Let $\big(U_{i}\big)_{i\geq 1}$ be a family of independent random variables uniformly distributed on $[0,1]$. In the following definition, recall that $\llbracket x(j)+1,y(j)\rrbracket$ denotes the slope of the cell $j$ in $\lambda$, for $1\leq j\leq n$.
\begin{defn}\label{defAverageCoord}
Let $\sigma\in D_{w}$. The averaged coordinate of $i$ with respect to $\sigma$ is the random variable defined by $\xi_{i}(\sigma)=0$ if $i>n$, and
$$\xi_{i}(\sigma)=\frac{x\big(\sigma^{-1}(i)\big)}{n}+U_{i}\frac{y\big(\sigma^{-1}(i)\big)-x\big(\sigma^{-1}(i)\big)}{n},$$
for $1\leq i\leq n$.

For $\sigma_{w}$ chosen uniformly among $D_{w}$, $\xi_{i}(\sigma_{w})$ is denoted by $\xi^{w}_{i}$, and the vector $(\xi^{w}_{1},\ldots,\xi^{w}_{k})$ is denoted by $\xi^{w}(k)$.
\end{defn}
When $k$ is equal to $n$, we simply write $\xi^{w}$ instead of $\xi^{w}_{n}$. The construction of $\xi^{w}_{i}$ roughly means that we sample $\sigma_{w}$ as a uniformly random standard filling of $\lambda$, we look at the cell containing $i$ with respect to this filling, and then we sample a random variable uniformly distributed on the rescaled slope of this cell. 

As Proposition \ref{equivPaintbox} will show below, $\big(\xi_{i}(\sigma)\big)_{1\leq i\leq n}$ completely characterizes the permutation $\sigma\in D_{w}$: namely, knowing that $\sigma$ is in $D_{w}$, the value of the random vector $\big(\xi_{i}(\sigma)\big)_{1\leq i\leq n}$ allows almost surely to reconstruct $\sigma$. This reconstruction requires a slightly modified version of $U(w)=\big(U_{\uparrow}(w),U_{\downarrow}(w)\big)$. In the following definition, recall that $e_{i}$ denotes the first cell of the run $s_{i}$ (as defined in Section \ref{definitionRun+Extreme}), and that $e_{i}$ belongs either to $V$ or to $P$, depending whether $s_{i}$ is an ascending or a descending run. 
\begin{defn}\label{RunPaintbox}
The run paintbox $\tilde{U}(w)$ associated to a word $w$ of length $n-1$ is the element of $\mathcal{U}^{(2)}$ consisting of the open subsets
\begin{itemize}
\item $\tilde{U}_{\uparrow}(w)=\bigcup_{e_{i}\in V} \left]\frac{e_{i}-1}{n},\frac{e_{i+1}-1}{n}\right[$, and
\item $\tilde{U}_{\downarrow}(w)=\bigcup_{e_{i}\in P}\left]\frac{e_{i}-1}{n},\frac{e_{i+1}-1}{n}\right[$,
\end{itemize}
with $e_{i+1}=n+1$ if $e_{i}=n$.
\end{defn}
The definition of $\tilde{U}(w)$ is very close to the one of $U(w)$ in \eqref{definitionU(w)},\eqref{definitionU(w)bis}: the intervals of $\tilde{U}(w)$ are essentially a rescaled version of the intervals of $U(w)$ with parameter $\frac{n-1}{n}$. The following lemma shows that the run paintbox $\tilde{U}(w)$ becomes closer to $U(w)$ when $n$ goes to infinity.
\begin{lem}\label{similarPaintbox}
With respect to the distance on $\mathcal{U}^{(2)}$,
$$d\big(U(w),\tilde{U}(w)\big)\leq\frac{1}{n}.$$
\end{lem}
\begin{proof}
The definition of $U(w)$ yields the following open sets:
$$U_{\uparrow}(w)=\bigcup_{e_{i}\in V,e_{i}\not =n}\left]\frac{e_{i}-1}{n-1},\frac{e_{i+1}-1}{n-1}\right[,$$
and
$$U_{\downarrow}(w)=\bigcup_{e_{i}\in P,e_{i}\not =n}\left]\frac{e_{i}-1}{n-1},\frac{e_{i+1}-1}{n-1}\right[.$$
Let us show that $U_{\uparrow}(w)^{c}$ is included in the $\frac{1}{n}-$inflation of $\tilde{U}_{\uparrow}(w)^{c}$ and conversely (the proof for $U_{\downarrow}(w)^{c}$ and $\tilde{U}_{\downarrow}(w)^{c}$ is the same).
The $\frac{1}{n}-$inflation of $U_{\uparrow}(w)^{c}$ is 
\begin{align*}
U_{\uparrow}&(w)^{c,1/n}\\
=&\left(\bigcup_{e_{i}\in P,e_{i}\not =n}\left[\frac{e_{i}-1}{n-1}-\frac{1}{n},\frac{e_{i+1}-1}{n-1}+\frac{1}{n}\right]\cap[0,1]\right)\cup \left[0,\frac{1}{n}\right]\cup \left[1-\frac{1}{n},1\right].
\end{align*}
On the other hand, 
$$\tilde{U}_{\uparrow}(w)^{c}=\left(\bigcup_{e_{i}\in P}\left[\frac{e_{i}-1}{n},\frac{e_{i+1}-1}{n}\right]\right)\cup \lbrace 0\rbrace\cup \lbrace 1\rbrace.$$
Suppose that $e_{i}\not=n$. For all $1\leq k\leq n-1$, $\frac{k}{n-1}-\frac{1}{n}\leq \frac{k}{n}\leq \frac{k}{n-1}+\frac{1}{n}$, thus 
\begin{align*}
\left[\frac{e_{i}-1}{n},\frac{e_{i+1}-1}{n}\right]\subset& \left[\left(\frac{e_{i}-1}{n-1}-\frac{1}{n}\right)\vee 0,\left(\frac{e_{i+1}-1}{n-1}+\frac{1}{n}\right)\wedge 1\right]\\
\subset& U_{\uparrow}(w)^{c,1/n}.
\end{align*}
If $e_{i}=n$, $\left[\frac{e_{i}-1}{n},\frac{e_{i+1}-1}{n}\right]=[1-1/n,1]\subset U_{\uparrow}(w)^{c,1/n}$. Hence, in any case,  $\tilde{U}_{\uparrow}(w)^{c}\subset U_{\uparrow}(w)^{c,1/n}$.

For the converse inclusion, the $\frac{1}{n}-$inflation of $\tilde{U}_{\uparrow}(w)^{c}$ is
\begin{align*}
\tilde{U}_{\uparrow}&(w)^{c,1/n}\\
=&\left(\bigcup_{e_{i}\in P}\left[\left(\frac{e_{i}-1}{n}-\frac{1}{n}\right)\vee 0,\left(\frac{e_{i+1}-1}{n}+\frac{1}{n}\right)\wedge 1\right]\right)\cup [0,\frac{1}{n}]\cup [1-\frac{1}{n},1],
\end{align*}
and 
$$U_{\uparrow}(w)^{c}=\left(\bigcup_{e_{i}\in P,e_{i}\not=n}\left[\frac{e_{i}-1}{n-1},\frac{e_{i+1}-1}{n-1}\right]\right)\cup \lbrace 0\rbrace\cup \lbrace 1\rbrace.$$
Since for $1\leq k\leq n-1$, $\frac{k}{n}-\frac{1}{n}\leq \frac{k}{n-1}\leq \frac{k}{n}+\frac{1}{n}$, for each $e_{i}\not = n$, 
$$\left[\frac{e_{i}-1}{n-1},\frac{e_{i+1}-1}{n-1}\right]\subset \left[\left(\frac{e_{i}-1}{n}-\frac{1}{n}\right)\vee 0,\left(\frac{e_{i+1}-1}{n}+\frac{1}{n}\right)\wedge 1\right],$$
and therefore $U_{\uparrow}(w)^{c}\subset \tilde{U}_{\uparrow}(w)^{c,1/n}$.

Doing the same for $U_{\downarrow}(w)$ and $\tilde{U}_{\downarrow}(w)$ concludes the proof.
\end{proof}
The previous lemma implies that for any sequence $(w_{n})_{n\geq 1}$ with $\vert w_{n}\vert \rightarrow \infty$, the convergence of $U(w_{n})$ is equivalent to the convergence of $\tilde{U}(w_{n})$, and both have the same limit. We show in the following result how to reconstruct $\sigma\in D_{w}$ from $\big(\xi_{i}(\sigma)\big)_{1\leq i\leq n}$ and $\tilde{U}(w)$. Recall that $\sigma_{\tilde{U}(w)}\big((\xi_{i}(\sigma))_{1\leq i\leq k}\big)$ is the oriented paintbox construction (as defined in \cref{paintboxDef}) applied to the tuple $\big(\xi_{i}(\sigma)\big)_{1\leq i\leq k}$ and the oriented paintbox $\tilde{U}(w)$.
\begin{prop}\label{equivPaintbox}
Let $1\leq k\leq n$ and let $\sigma$ be a permutation in $D_{w}$. Then, almost surely,
$$\sigma_{\tilde{U}(w)}\Big(\big(\xi_{i}(\sigma)\big)_{1\leq i\leq k}\Big)=\sigma_{\downarrow k}.$$
In particular, the random variables $(\sigma_{w})_{\downarrow k}$ and $\sigma_{\tilde{U}(w)}\big(\xi^{w}(k)\big)$ have the same law.
\end{prop}
\begin{proof}
It is enough to prove it for $k=n$. Let $\sigma\in D_{w}$ and write $\xi_{i}(\sigma)=\xi_{i}$ and $\xi=(\xi_{i}(\sigma))_{1\leq i\leq n}$. It is equivalent to prove that for $1\leq i<j\leq n$, $$\left(\sigma_{\tilde{U}(w)}(\xi)\right)^{-1}(i)<\left(\sigma_{\tilde{U}(w)}(\xi)\right)^{-1}(j)\iff \sigma^{-1}(i)<\sigma^{-1}(j).$$
If $\sigma^{-1}(i)<\sigma^{-1}(j)$, then $i$ is left to $j$ in the associated filling of $\lambda$. This is possible in one of the two following situations.
\begin{enumerate}
\item $\mathfrak{s}(i)$ and $\mathfrak{s}(j)$ are disjoint and $\mathfrak{s}(i)$ is left to $\mathfrak{s}(j)$. In this case, $\xi_{i}$ and $\xi_{j}$ are not in the same interval component of $\tilde{U}(w)$ and $\xi_{i}$ is in an interval component left to the one of $\xi_{j}$. By the run paintbox construction, $$\left(\sigma_{\tilde{U}(w)}(\xi)\right)^{-1}(i)<\left(\sigma_{\tilde{U}(w)}(\xi)\right)^{-1}(j).$$
\item $\mathfrak{s}(i)$ and $\mathfrak{s}(j)$ overlap. This implies that $i$ and $j$ are in a same run $s=\llbracket e_{h},e_{h+1}\rrbracket$ of $\lambda$, with $1\leq h\leq t$. Let $I_{s}=\left]\frac{e_{h}-1}{n},\frac{e_{h+1}}{n}\right[$. Since $i<j$ and $\sigma^{-1}(i)<\sigma^{-1}(j)$, the run $s$ has to be an ascending one and thus $e_{h}\in V$, $e_{h+1}\in P$ and $I_{s}$ is an interval component of 
$\tilde{U}_{\uparrow}(w)$. In particular, $\sigma^{-1}(i)$ cannot be a peak, and $\sigma^{-1}(j)$ cannot be a valley. Thus $\xi_{i}$ is either in an interval component left to $I_{s}$, or in $I_{s}$. For similar reasons, $\xi_{j}$ is either in an interval component right to $I_{s}$, or in $I_{s}$. This implies that if $\xi_{i}$ or $\xi_{j}$ is not in $I_{s}$, $\left(\sigma_{\tilde{U}(w)}(\xi)\right)^{-1}(i)<\left(\sigma_{\tilde{U}(w)}(\xi)\right)^{-1}(j)$. But if $\xi_{i}$ and $\xi_{j}$ are both in $I_{s}$, since the latter is in $\tilde{U}_{\uparrow}(w)$, the same inequality holds.
\end{enumerate}
Finally, in any case, 
$$\sigma^{-1}(i)<\sigma^{-1}(j)\Longrightarrow \left(\sigma_{\tilde{U}(w)}(\xi)\right)^{-1}(i)<\left(\sigma_{\tilde{U}(w)}(\xi)\right)^{-1}(j).$$
The pattern is exactly the same to prove that
$$\sigma^{-1}(i)>\sigma^{-1}(j)\Longrightarrow \left(\sigma_{\tilde{U}(w)}(\xi)\right)^{-1}(i)>\left(\sigma_{\tilde{U}(w)}(\xi)\right)^{-1}(j),$$
yielding the first part of the proposition. 

Recall that if $(x_{i})_{1\leq i\leq n}$ is a sequence in $[0,1]^{n}$, the sequence of permutations $\big(\sigma_{\tilde{U}(w)}(x_{1},\ldots,x_{k})\big)_{1\leq k\leq n}$ is coherent in the sense of \cref{definitionCoherenceArragement}. From the first part of the proof, $\sigma_{w}=\sigma_{\tilde{U}(w_{n})}(\xi_{1},\ldots,\xi_{n})$, thus, for all $1\leq k\leq n$, 
$$(\sigma_{w})_{\downarrow k}=\big(\sigma_{\tilde{U}(w_{n})}(\xi_{1},\ldots,\xi_{n})\big)_{\downarrow k}=\sigma_{\tilde{U}(w_{n})}(\xi_{1},\ldots,\xi_{k}).$$
Therefore, $(\sigma_{w})_{\downarrow k}$ and $\sigma_{\tilde{U}(w)}\big(\xi^{w}(k)\big)$ are equal and have same distribution.
\end{proof}
The following lemma shows that it is possible to recover exactly the positions of $\lbrace 1,\ldots,k\rbrace$ in the filling $\sigma$ of $\lambda$ from $\big(\xi_{i}(\sigma)\big)_{1\leq i\leq k}$. 
\begin{lem}\label{fixedPosition}
Let $\sigma,\sigma'$ be two permutations in $D_{w}$. If the two sequences $\big(\sigma^{-1}(1),\ldots,\sigma^{-1}(k)\big)$ and $\big(\sigma'^{-1}(1),\ldots,\sigma'^{-1}(k)\big)$ are not equal, then the random vectors $\big(\xi_{1}(\sigma),\ldots,\xi_{k}(\sigma)\big)$ and $\big(\xi_{1}(\sigma'),\ldots,\xi_{k}(\sigma')\big)$ have disjoint supports (where the support of a random vector is defined as the support of its distribution).
\end{lem}
\begin{proof}
The proof is done by induction on $k\geq 1$. Let $k=1$. The integer $1$ has to be located in a valley of $\lambda$. If $\sigma^{-1}(1)\not =\sigma'^{-1}(1)$, then $1$ is located in a different valley in $\sigma$ and $\sigma'$. Thus, the slopes of $\sigma^{-1}(1)$ and $\sigma'^{-1}(1)$ are disjoint, and the random variables $\xi_{1}(\sigma)$ and $\xi_{1}(\sigma')$ have disjoint supports.

Let $k>1$. Suppose that $\big(\sigma^{-1}(1),\ldots,\sigma^{-1}(k)\big)\not = \big(\sigma'^{-1}(1),\ldots,\sigma'^{-1}(k)\big)$. By the induction hypothesis, if $\big(\sigma^{-1}(1),\ldots,\sigma^{-1}(k-1)\big)$ is not equal to $\big(\sigma'^{-1}(1),\ldots,\sigma'^{-1}(k-1)\big)$, the two random vectors $\big(\xi_{1}(\sigma),\ldots,\xi_{k-1}(\sigma)\big)$ and $\big(\xi_{1}(\sigma'),\ldots,\xi_{k-1}(\sigma')\big)$ have disjoint supports. This yields also that the two random vectors $\big(\xi_{1}(\sigma),\ldots,\xi_{k}(\sigma)\big)$ and $\big(\xi_{1}(\sigma'),\ldots,\xi_{k}(\sigma')\big)$ have disjoint supports.

Thus, let us assume that $\big(\sigma^{-1}(1),\ldots,\sigma^{-1}(k-1)\big)= \big(\sigma'^{-1}(1),\ldots,\sigma'^{-1}(k-1)\big)$. This implies that $\sigma^{-1}(k)\not=\sigma'^{-1}(k)$; since the positions of $\lbrace 1,\ldots, k-1\rbrace$ are the same in the fillings $\sigma$ and $\sigma'$ of $\lambda$, the cell containing $k$ in $\sigma$ is in a different run than the cell containing $k$ in $\sigma'$. Therefore their slopes are disjoint, and the random vectors $\big(\xi_{1}(\sigma),\ldots,\xi_{k}(\sigma)\big)$ and $\big(\xi_{1}(\sigma'),\ldots,\xi_{k}(\sigma')\big)$ have disjoint supports.
\end{proof}
Although the following result is crucial for the proof of \cref{mainresult1}, its proof is rather technical and has been postponed to the Appendix.
\begin{prop}\label{convergencePaintbox}
Let $(U_{n})_{n\geq 1}$ be a sequence of elements of $\mathcal{U}^{(2)}$ and let $\big((X^{n}(i))_{i\geq 1}\big)_{n\geq 1}$ be a sequence of random infinite vectors in $[0,1]^{\mathbb{N}}$. Let $(X^{0}(1),X^{0}(2),\ldots)$ be a random infinite vector in $[0,1]^{\mathbb{N}}$. Suppose that each finite dimensional marginal law of each of these random vectors admits a density with respect to the Lebesgue measure. If $(U_{n})_{n\geq 1}$ converges to $U$ in $\mathcal{U}^{(2)}$ and for each $k\geq 1$, $X_{k}^{n}=\big(X^{n}(1),\ldots,X^{n}(k)\big)$ converges in law to $X_{k}^{0}=\big(X^{0}(1),\ldots,X^{0}(k)\big)$, then for each $k\geq 1$,
$$\sigma_{U_{n}}(X_{k}^{n})\xrightarrow{\law} \sigma_{U}(X^{0}_{k}).$$
\end{prop}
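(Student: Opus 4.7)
The plan is to introduce a ``generic'' subset of $[0,1]^k$ on which the paintbox map is locally constant under simultaneous perturbations of the points and of the paintbox, and then to invoke Skorokhod's representation theorem to transfer the convergence in law of $X^n_k$ into convergence in law of $\sigma_{U_n}(X^n_k)$.

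First I would define $G_U \subset [0,1]^k$ to be the set of tuples $(x_1,\dots,x_k)$ whose coordinates are pairwise distinct and such that each $x_i$ belongs either to $U_\uparrow$, to $U_\downarrow$, or to the interior of $(U_\uparrow \cup U_\downarrow)^c$; equivalently, no $x_i$ lies on the boundary $\partial U_\uparrow \cup \partial U_\downarrow$. Since $U_\uparrow$ and $U_\downarrow$ are open subsets of $[0,1]$ with at most countably many components, this excluded set is countable and hence Lebesgue-null. The density assumption on $X^0_k$ then gives $\mathbb{P}(X^0_k \in G_U) = 1$.

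The core technical step is the following local-constancy claim: if $(x_1,\dots,x_k) \in G_U$, $U_n \to U$ in $\mathcal{U}^{(2)}$, and $(y^n_1,\dots,y^n_k) \to (x_1,\dots,x_k)$ in $[0,1]^k$, then $\sigma_{U_n}(y^n_1,\dots,y^n_k) = \sigma_U(x_1,\dots,x_k)$ for all $n$ large enough. Since the paintbox permutation is built from pairwise comparisons (Definition \ref{paintbox}), it suffices to verify each rule. If $x_i$ and $x_j$ lie in the same component $C$ of $U_\uparrow$, I would pick a closed subinterval $[\alpha,\beta] \subset C$ containing $x_i,x_j$ in its interior and at positive distance from $U_\uparrow^c$; the Hausdorff convergence $d_H(U_{n,\uparrow}^c, U_\uparrow^c) \to 0$ then forces $[\alpha,\beta] \subset U_{n,\uparrow}$ for $n$ large, and so $y^n_i,y^n_j$ land in a common component of $U_{n,\uparrow}$. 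Conversely, if $x_i, x_j$ are not in the same component of $U_\uparrow$ -- either they sit in distinct components or at least one lies outside $U_\uparrow$ -- then an interior point of some gap between them in $U_\uparrow^c$ yields, again via Hausdorff convergence, a point of $U_{n,\uparrow}^c$ strictly between $y^n_i$ and $y^n_j$, preventing them from sharing a component of $U_{n,\uparrow}$. The same argument applies verbatim to $U_\downarrow$. Finally, the distinctness of the $x_i$'s preserves the numerical ordering of the $y^n_i$'s for $n$ large. Combining these facts, every pairwise rule in Definition \ref{paintbox} is preserved, proving the claim.

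With the claim in hand the conclusion is routine: $[0,1]^k$ is Polish, so Skorokhod's representation theorem realizes the convergence $X^n_k \to X^0_k$ in law as almost sure convergence $Y^n \to Y^0$ on an auxiliary probability space, with $Y^n \sim X^n_k$ and $Y^0 \sim X^0_k$. Since $Y^0 \in G_U$ almost surely, the claim gives $\sigma_{U_n}(Y^n) \to \sigma_U(Y^0)$ almost surely, hence in law, which is exactly the statement of the proposition. I expect the main obstacle to be the local-constancy claim, and specifically the careful handling of ``spurious'' new components of $U_{n,\uparrow}$ or $U_{n,\downarrow}$ that may appear around points $x_i$ lying strictly outside $U_\uparrow \cup U_\downarrow$: Hausdorff convergence allows such components to exist but forces them to be small, which is precisely what is needed to ensure that they do not connect points $y^n_i,y^n_j$ corresponding to distinct components of $U$.
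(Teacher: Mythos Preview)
Your argument is correct and complete. The local-constancy claim is sound: for each pair $(i,j)$ you verify that membership in a common component of $U_\uparrow$ (resp.\ $U_\downarrow$) is stable under the joint perturbation $(U_n,y^n)\to(U,x)$, using only that $x\in G_U$ and the Hausdorff convergence of the complements. The point you flag at the end --- that spurious small components of $U_{n,\uparrow}$ or $U_{n,\downarrow}$ may appear near an $x_i$ lying in the interior of $(U_\uparrow\cup U_\downarrow)^c$ --- is handled by the same separation argument, since such an $x_i$ has a full neighbourhood inside $U_\uparrow^c$ (and inside $U_\downarrow^c$), furnishing the required separating point $p$ strictly between $x_i$ and $x_j$. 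Skorokhod then turns the pointwise statement into convergence in law.

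This is a genuinely different route from the paper's proof in the Appendix. There the authors first establish a general lemma of the form ``$f_n(X_n)\to f(X)$ in law provided there exist sets $\mathcal{X}_m$ of probability tending to $1$, with null boundary for $X$, on which $f_n\to f$ uniformly'' (Lemma~\ref{convergenceLaw}), and then construct such $\mathcal{X}_m$ explicitly by excising $\eta$-neighbourhoods of the diagonal and of the endpoints of the large interval components of $U$. On each $\mathcal{X}_m$ they show $\sigma_{U_n}=\sigma_U$ exactly for $n$ large, via an auxiliary notion of ``cluster sets'' (Lemma~\ref{measurability}). Your approach replaces this two-step machinery by a single pointwise stability statement on the full-measure set $G_U$ and lets Skorokhod's representation do the rest. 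The gain is brevity and transparency; the paper's approach, by contrast, avoids invoking Skorokhod and makes the stabilisation sets explicit, which could in principle be pushed towards quantitative estimates. Both proofs rest on the same geometric fact: away from the diagonal and from $\partial U_\uparrow\cup\partial U_\downarrow$, the paintbox map is locally constant in both arguments.
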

In the previous proposition, the absolute continuity  with respect to the Lebesgue measure of the finite-dimensional marginal laws of the random vectors $(X^{n}(i))_{i\geq 1}$ for $n\geq 1$ may be dropped (contrary to the absolute continuity of the ones of $(X^{0}_{i})_{i\geq 1}$, which is necessary); however, the proof of the proposition is easier if we assume that all the distributions involved admit a density.
\section{Combinatorics of large compositions}\label{SectionCombinatorics}
The purpose of this section is to introduce the background material to prove that the distribution of $\xi^{w}(k)$ is close to the distribution of independent uniform random variables on $[0,1]$ when $w$ is long. Since $\xi^{w}_{j}$ depends uniquely on the cell of $\lambda(w)$ in which $j$ is located in the random filling $\sigma_{\lambda}$ of $\lambda(w)$, it is necessary to evaluate the probability for $j$ to be located in a particular cell $c$ of $\lambda(w)$.
For a composition $\lambda$ of $n$ and $i\in\lambda$ a fixed cell, denote by $\lambda_{\leq i}$ (resp.~$\lambda_{\geq i}$, resp.~$\lambda_{< i}$, resp.~$\lambda_{> i}$)  the composition $\lambda$ restricted to cells left (resp.~right, resp.~strictly left, resp.~strictly right) to $i$. Denote by $d(\lambda)$ the number of standard fillings of the ribbon Young diagram associated to $\lambda$; remark that $d(\lambda)=d\big(w(\lambda)\big)$, where $d(w(\lambda))$ is the number of paths from the root to $w(\lambda)$ on $\mathcal{Z}$.

Let us focus here on the location of $1$ in $\sigma_\lambda$. Since $1$ is necessary a local minimum in any filling of $\lambda$, it has to be located in a valley $v\in V$.  For a fixed valley $v$ of $\lambda$, the cardinality of standard fillings of $\lambda$ such that $1$ is located in $v$ is exactly the number of possibilities to fill in the part of $\lambda$ strictly left of $v$, with any subset $S$ of cardinality $\vert \lambda_{<v}\vert$ of $\llbracket 2,n\rrbracket$, and to fill in independently the part of $\lambda$ strictly right to $v$ with the complementary subset of $S$ in $\llbracket 2,n\rrbracket$. Thus,
\begin{equation}\label{position1}
\mathbb{P}_{\sigma_\lambda}(1\in v)=\frac{(n-1) !}{\vert \lambda_{< v}\vert !\  \vert \lambda_{>v}\vert !}\frac{d(\lambda_{>v})d(\lambda_{<v})}{d(\lambda)}.
\end{equation}
The problem is therefore essentially to relate $d(\lambda_{>v})d(\lambda_{<v})$ to $d(\lambda)$.
\subsection{Probabilistic approach to the combinatorics of descents}
Ehrenborg, Levin and Readdy (see \cite{ehrenborg2002probabilistic}) formalized in the context of descent words an old relation between permutations of $n$ and polytopes in $[0,1]^{n}$. Namely, since the volume of the set \[R_{\sigma}=\lbrace x_{\sigma(1)}<\cdots<x_{\sigma(n)}\rbrace\]
for $\sigma\in S_{n}$ is exactly $\frac{1}{n!}$, it is possible to determine probabilistic quantities on $ S_{n}$ by integrating certain functions that are constant on each region $R_{\sigma}$. The appropriate functions for descent words were found in \cite{ehrenborg2002probabilistic}, yielding some new estimates like in \cite{ehrenborg2002asymptotics} and \cite{bender2004asymptotics}. The model of Ehrenborg, Levin and Readdy is presented in this paragraph, but in a modified way to focus only on the set of extreme cells $\mathcal{E}$ (as defined in \cref{definitionRun+Extreme}). This yields the following framework: let $\lambda$ be a composition of $n\geq 2$ with the set of extreme cells $\mathcal{E}=\lbrace e_{1}=1,e_{2},\ldots,e_{t+1}=n\rbrace$. Suppose for example that the first cell is a valley (namely $e_{1}\in V$) and recall that $s_{j}$ is the run between $e_{j}$ and $e_{j+1}$, with $l_{j}$ its length. We associate to $\lambda$ the couple of random variables $(X_{\lambda},Y_{\lambda})$ on $[0,1]^{2}$ with density
\begin{align}
d_{X_{\lambda},Y_{\lambda}}&(x_{1},x_{t+1})\nonumber\\=&\frac{1}{\mathcal{V}_{\lambda}}\idotsint_{[0,1]^{t-1}}\mathbf{1}_{x_{1}<x_{2}>x_{3}<x_{4}>\dots} \prod_{1\leq i\leq t}\frac{\vert x_{i}-x_{i+1}\vert^{l_{i}-1}}{(l_{i}-1)!}\prod_{2\leq i\leq t}d x_{i},\label{densitySawtooth}
\end{align}
where $\mathcal{V}_{\lambda}$ is a normalization constant. If the first cell is a peak, the inequalities in the expression for the density are reversed. If $\lambda=(1)$ is the unique composition of $1$, the expression for the distribution of $(X_{(1)},Y_{(1)})$ is simply $\delta_{X_{(1)}=Y_{(1)}}$, since there is only one particle in the model.

The latter probabilistic model is related to the combinatorics of descent words through the equality
\begin{equation}\label{volumeDescentSet}
d(\lambda)=n !\ \mathcal{V}_{\lambda}, 
\end{equation}
whose proof can be found in \cite{ehrenborg2002probabilistic}. Moreover, this model behaves simply with respect to the concatenation of compositions.
\begin{defn}
Let $\lambda=(\lambda_{1},\ldots,\lambda_{r})$ and $\mu=(\mu_{1},\ldots,\mu_{s})$ be two compositions of $m$ and $n$. The concatenated composition $\lambda+\mu$ is the composition of $n+m$
$$\lambda+\mu=(\lambda_{1},\ldots,\lambda_{r}+\mu_{1},\mu_{2},\ldots,\mu_{s}),$$
and the concatenated composition $\lambda -\mu$ is the composition of $n+m$
$$\lambda-\mu=(\lambda_{1},\ldots,\lambda_{r},\mu_{1},\mu_{2},\ldots,\mu_{s}).$$
\end{defn}
This definition has a simple meaning in terms of associated ribbon Young diagrams: namely, the diagram of $\lambda+\mu$ (resp.~$\lambda-\mu$) is the juxtaposition of the one of $\lambda$ and the one of $\mu$ such that the last cell of $\lambda$ is left to (resp.~above) the first cell of $\mu$. An application of \cite[Section 2]{ehrenborg2002probabilistic} (see also \cite[Lemma 2]{bender2004asymptotics}) implies the following expression for the concatenation of compositions in the probabilistic framework.
\begin{prop}\label{concatenationVolume}
Let $\lambda,\mu$ be two compositions, $\vartheta\in\lbrace -,+\rbrace$. 
Then, 
$$\mathcal{V}_{\lambda\vartheta\mu}=\mathcal{V}_{\lambda}\mathcal{V}_{\mu}\mathbb{P}(Y_{\lambda}\leq_{\vartheta} X_{\mu})$$
and 
$$d_{X_{\lambda\vartheta\mu},Y_{\lambda\vartheta\mu}}(x,y)=\frac{1}{\mathbb{P}(Y_{\lambda}\leq_{\vartheta} X_{\mu})}\iint_{[0,1]^{2}}d_{X_{\lambda},Y_{\lambda}}(x,u)\mathbf{1}_{u\leq_{\vartheta}v}d_{X_{\mu},Y_{\mu}}(v,y)\,du\,dv,$$
where $\leq_{-}=\geq$ and $\leq_{+}=\leq$, and the couples $(X_{\lambda},Y_{\lambda})$ and $(X_{\mu},Y_{\mu})$ are considered as independent.
\end{prop}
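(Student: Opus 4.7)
The plan is to realize the sawtooth density $d_{X_{\nu},Y_{\nu}}$ as the joint density of the first and last coordinates of a point drawn uniformly from the polytope
$$P_{\nu}=\{(u_{1},\dots,u_{|\nu|})\in[0,1]^{|\nu|}:u_{i}<u_{i+1}\text{ if }i\notin D_{\nu},\ u_{i}>u_{i+1}\text{ if }i\in D_{\nu}\},$$
whose volume equals $\mathcal{V}_{\nu}$ by \eqref{volumeDescentSet}. Once this polytope interpretation is established, concatenation of compositions corresponds to a Cartesian product of polytopes subject to a single inequality between the gluing coordinates, and both assertions of the proposition will follow from a one-line Fubini argument.

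To identify \eqref{densitySawtooth} with the marginal density of $(U_{1},U_{|\nu|})$ under the uniform law on $P_{\nu}$, I would integrate out the coordinates that lie strictly between two consecutive extreme cells. Along a run of length $l_{i}$ with endpoint values $x_{i}$ and $x_{i+1}$, integrating the $l_{i}-1$ intermediate coordinates over the relevant monotone chain produces exactly the factor $|x_{i}-x_{i+1}|^{l_{i}-1}/(l_{i}-1)!$, while the alternating pattern of strict inequalities at extreme cells becomes the product of indicators $\mathbf{1}_{x_{2i-1}<x_{2i}>x_{2i+1}}$ (with inequalities reversed when $a_{1}\in P$). Normalising by $\mathcal{V}_{\nu}$ then recovers the stated density.

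Taking this identification for granted, the descent set of $\lambda\epsilon\mu$ is $D_{\lambda}\cup(|\lambda|+D_{\mu})$, augmented by the extra element $|\lambda|$ exactly when $\epsilon=-$. Hence
$$P_{\lambda\epsilon\mu}=\bigl\{(U_{1},\dots,U_{|\lambda|+|\mu|}):(U_{1},\dots,U_{|\lambda|})\in P_{\lambda},\ (U_{|\lambda|+1},\dots,U_{|\lambda|+|\mu|})\in P_{\mu},\ U_{|\lambda|}\leq_{\epsilon}U_{|\lambda|+1}\bigr\}.$$
Applying Fubini to this product structure, together with the fact that the marginal density of the rightmost coordinate of a uniform point in $P_{\lambda}$ is the marginal of $Y_{\lambda}$ obtained from $d_{X_{\lambda},Y_{\lambda}}$ (and similarly for the leftmost coordinate in $P_{\mu}$ and $X_{\mu}$), immediately yields $\mathcal{V}_{\lambda\epsilon\mu}=\mathcal{V}_{\lambda}\mathcal{V}_{\mu}\,\mathbb{E}(Y_{\lambda}\leq_{\epsilon}X_{\mu})$. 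Fixing $U_{1}=x$ and $U_{|\lambda|+|\mu|}=y$ and repeating the same decomposition yields the density identity after dividing by $\mathcal{V}_{\lambda\epsilon\mu}$.

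The main obstacle is the polytope interpretation step itself. The expression \eqref{densitySawtooth} is written in the paper only for $a_{1}\in V$ and with the separate convention $d_{X_{\square},Y_{\square}}(u,v)=\delta_{u=v}$, so I would need to check that both orientations of the sawtooth and the degenerate single-cell case are consistent with the marginal-on-$P_{\nu}$ viewpoint. Once this identification is in hand, no case analysis on whether cells $|\lambda|$ and $|\lambda|+1$ remain extreme cells of $\lambda\epsilon\mu$ is needed: the single boundary inequality $U_{|\lambda|}\leq_{\epsilon}U_{|\lambda|+1}$ encodes the entire interaction between $\lambda$ and $\mu$, and the rest of the argument is pure Fubini.
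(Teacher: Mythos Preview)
Your proposal is correct and essentially coincides with the approach the paper defers to: the paper does not prove this proposition in the text but cites Section~2 of \cite{ehrenborg2002probabilistic} (and Lemma~2 of \cite{bender2004asymptotics}), where the polytope interpretation you describe is precisely the tool used. Your identification of $(X_{\nu},Y_{\nu})$ with the marginal of $(U_{1},U_{|\nu|})$ under the uniform law on $P_{\nu}$ is exactly the content of \eqref{densitySawtooth} after integrating out the non-extreme coordinates along each run, and the one-inequality product decomposition of $P_{\lambda\epsilon\mu}$ then gives both assertions by Fubini, just as you say.

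The only point I would add is that your self-identified ``main obstacle'' is not really one: since $a_{1}=1$ and $a_{r}=|\nu|$ are always extreme cells, the polytope marginal and the density \eqref{densitySawtooth} agree automatically in both orientations, and the single-cell convention $d_{X_{\square},Y_{\square}}=\delta_{u=v}$ is precisely the degenerate marginal of the one-dimensional polytope $P_{\square}=[0,1]$. No further case analysis is needed.
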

The previous proposition yields a particular case that helps to compute the law of $\xi_{1}^{w}$. Denote by $F_{\lambda}$ (resp. $G_{\lambda}$) the cumulative distribution function of the random variable $X_{\lambda}$ (resp. $Y_{\lambda}$) associated to a composition $\lambda$ in \eqref{densitySawtooth}. Since all the random variables take value in $[0,1]$, we consider $F_{\lambda}$ and $G_{\lambda}$ as function on $[0,1]$.
\begin{cor}\label{firstStepPos1}
Let $\lambda$ be a composition of $n$ and $v$ a valley of $\lambda$. Then 
$$\mathbb{P}_{\lambda}(1\in v)=\frac{1}{n}\frac{1}{\int_{0}^{1}\big(1-G_{\lambda_{<v}}(t)\big)\big(1-F_{\lambda_{>v}}(t)\big)dt},$$
with the convention $G_{\lambda_{<1}}=F_{\lambda_{>n}}=0$.
\end{cor}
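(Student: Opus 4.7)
The plan is to rewrite the probability as a ratio of volumes via (\ref{position1}) and (\ref{volumeDescentSet}), then to derive the required integral formula by applying Proposition \ref{concatenationVolume} to the decomposition of $\lambda$ around the valley $v$.

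First, substituting $d(\mu) = |\mu|!\,\mathcal{V}_\mu$ from (\ref{volumeDescentSet}) into (\ref{position1}) gives, after the factorials cancel,
$$\mathbb{P}_\lambda(1 \in v) = \frac{1}{n} \cdot \frac{\mathcal{V}_{\lambda_{<v}}\, \mathcal{V}_{\lambda_{>v}}}{\mathcal{V}_\lambda}.$$
Thus it suffices to establish the volume identity
$$\mathcal{V}_\lambda = \mathcal{V}_{\lambda_{<v}}\, \mathcal{V}_{\lambda_{>v}} \int_0^1 \bigl(1 - F_{Y_{\lambda_{<v}}}(t)\bigr)\bigl(1 - F_{X_{\lambda_{>v}}}(t)\bigr)\, dt.$$

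For the key step, I use that $v$ being a valley means $v-1 \in D_\lambda$ and $v \notin D_\lambda$, so the ribbon decomposes as $\lambda = \lambda_{<v} - \square + \lambda_{>v}$: the singleton at position $v$ is attached vertically to $\lambda_{<v}$ (descent at $v-1$) and horizontally to $\lambda_{>v}$ (no descent at $v$). I apply Proposition \ref{concatenationVolume} twice. The first application (vertical join) gives
$$\mathcal{V}_{\lambda_{<v} - \square} = \mathcal{V}_{\lambda_{<v}}\, \mathbb{E}(Y_{\lambda_{<v}} \geq X_\square),$$
and the joint-density formula from the same proposition, together with the fact that $(X_\square, Y_\square)$ is concentrated on the diagonal $u=v$, yields after integrating out $x$
$$d_{Y_{\lambda_{<v} - \square}}(y) = \frac{1 - F_{Y_{\lambda_{<v}}}(y)}{\mathbb{E}(Y_{\lambda_{<v}} \geq X_\square)}.$$
The second application (horizontal join with $\lambda_{>v}$) gives
$$\mathcal{V}_\lambda = \mathcal{V}_{\lambda_{<v} - \square}\, \mathcal{V}_{\lambda_{>v}}\, \mathbb{E}(Y_{\lambda_{<v} - \square} \leq X_{\lambda_{>v}}),$$
and rewriting the last expectation as $\int_0^1 d_{Y_{\lambda_{<v}-\square}}(y)\bigl(1 - F_{X_{\lambda_{>v}}}(y)\bigr)\, dy$ and substituting the above density, the normalizing factor $\mathbb{E}(Y_{\lambda_{<v}} \geq X_\square)$ cancels against $\mathcal{V}_{\lambda_{<v} - \square}/\mathcal{V}_{\lambda_{<v}}$, producing exactly the claimed volume identity.

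Combining the two steps gives the corollary. The degenerate cases $v = 1$ and $v = n$ are covered by the conventions $Y_{\lambda_{<1}} = \delta_1$ and $X_{\lambda_{>n}} = \delta_1$: the corresponding factor in the integrand is then $1$ on $[0,1)$, and only one of the two concatenation steps is actually carried out. The main bookkeeping is the density computation for $Y_{\lambda_{<v} - \square}$; I do not expect a real obstacle, since everything follows mechanically from the probabilistic framework of Ehrenborg--Levin--Readdy once the correct ribbon decomposition is fixed.
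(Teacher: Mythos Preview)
Your proof is correct and follows essentially the same approach as the paper: reduce to the volume ratio via (\ref{position1}) and (\ref{volumeDescentSet}), decompose $\lambda=\lambda_{<v}-\square+\lambda_{>v}$, and apply Proposition~\ref{concatenationVolume} twice. The only cosmetic difference is the order of grouping---the paper first forms $\square+\lambda_{>v}=\lambda_{\geq v}$ and computes $d_{X_{\lambda_{\geq v}}}$, whereas you first form $\lambda_{<v}-\square$ and compute $d_{Y_{\lambda_{<v}-\square}}$; the two routes are symmetric and yield the same integral identity.
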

\begin{proof}
Since $v$ is a valley, $\lambda$ can be written as $\lambda_{<v}-\lambda_{\geq v}$. Thus, the previous proposition yields
$$\mathcal{V}_{\lambda}=\mathcal{V}_{\lambda_{<v}-\lambda_{\geq v}}=\mathcal{V}_{\lambda_{<v}}\mathcal{V}_{\lambda_{\geq v}}\mathbb{P}(Y_{\lambda_{<v}}\geq X_{\lambda_{\geq v}}).$$
Conditioning the expectation on the value of $X_{\lambda_{\geq v}}$ gives by independence,
$$\mathbb{P}(Y_{\lambda_{<v}}\geq X_{\lambda_{\geq v}})=\int_{0}^{1}\big(1-G_{\lambda_{<v}}(t)\big)d_{X_{\lambda_{\geq v}}}(t)dt.$$
On the other hand from the previous proposition, since $X_{\lambda_{\geq v}}=X_{(1)+\lambda_{>v}}$,
\begin{align*}
d_{X_{\lambda_{\geq v}}}(t)=&\frac{1}{\mathbb{P}(Y_{(1)}\leq X_{\lambda_{>v}})}\iiint_{[0,1]^{3}}\delta(t,u)\,\mathbf{1}_{u\leq v}\,d_{X_{\lambda_{>v}},Y_{\lambda_{>v}}}(v,y)\,du\,dv\,dy\\
=&\frac{\mathcal{V}_{\lambda_{>v}}\mathcal{V}_{(1)}}{\mathcal{V}_{\lambda_{\geq v}}}\big(1-F_{\lambda_{>v}}(t)\big),
\end{align*}
and finally, since $\mathcal{V}_{(1)}=1$,
$$\mathcal{V}_{\lambda}=\mathcal{V}_{\lambda_{<v}}\mathcal{V}_{\lambda_{>v}}\int_{0}^{1}\big(1-G_{\lambda_{<v}}(t)\big)\big(1-F_{\lambda_{>v}}(t)\big)dt.$$
Using the latter result in \cref{position1,volumeDescentSet} yields
\begin{align*}
\mathbb{P}_{\sigma_\lambda}(1\in v)=&\frac{(n-1) !}{\vert \lambda_{< v}\vert !  \:\vert \lambda_{>v}\vert !}\frac{d(\lambda_{>v})\,d(\lambda_{<v})}{d(\lambda)}\\
=&\frac{(n-1) !}{\vert \lambda_{< v}\vert !\:  \vert \lambda_{>v}\vert !}\,\frac{\vert \lambda_{< v}\vert !\: \vert \lambda_{>v}\vert !}{n !}\frac{\mathcal{V}_{\lambda_{>v}}\mathcal{V}_{\lambda_{<v}}}{\mathcal{V}_{\lambda}}\\
=&\frac{1}{n}\frac{\mathcal{V}_{\lambda_{>v}}\,\mathcal{V}_{\lambda_{<v}}}{\mathcal{V}_{\lambda_{<v}}\mathcal{V}_{\lambda_{>v}}\int_{0}^{1}\big(1-G_{\lambda_{<v}}(t)\big)\big(1-F_{\lambda_{>v}}(t)\big)dt}\\
=&\frac{1}{n}\frac{1}{\int_{0}^{1}\big(1-G_{\lambda_{<v}}(t)\big)\big(1-F_{\lambda_{>v}}(t)\big)dt}.
\end{align*}
\end{proof}
The previous corollary shows that the probability that the entry $1$ is located on a valley $v$ in $\lambda$ essentially depends on the quantity 
$$\Delta(\lambda_{<v},\lambda_{>v})=\int_{0}^{1}\big(1-G_{\lambda_{<v}}(t)\big)\big(1-F_{\lambda_{>v}}(t)\big)dt.$$
\begin{defn}
The correlation $\Delta(\lambda,\mu)$ between two compositions $\lambda$ and $\mu$ is the integral
$$\Delta(\lambda,\mu)=\int_{0}^{1}\big(1-G_{\lambda}(t)\big)\big(1-F_{\mu}(t)\big)dt.$$
\end{defn}
\subsection{Estimates on $\Delta(\lambda,\nu)$}
We obtain in this section several estimates on $\Delta(\lambda,\mu)$ by using some results on the behavior of $F_{\mu},G_{\mu}$ obtained in \cite{sawtooth}; the reader should refer to \cite{sawtooth} to find detailed proofs of the results used in this section. The first result is a bound on $F_{\lambda}$ which depends on the length of the first run of $\lambda$.
\begin{prop}[Cor.$\, 4.9$, \cite{sawtooth}]\label{evaluateRun}
Let $\lambda$ be a composition with the first run of length $R$. If the first run is increasing, the following inequality holds:
$$1-(1-t)^{R}\leq F_{\lambda}(t)\leq 1-(1-t)^{R+1}.$$
If the first run is decreasing, then
$$t^{R+1}\leq F_{\lambda}(t)\leq t^{R}.$$
\end{prop}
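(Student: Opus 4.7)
The plan is to exploit the concatenation structure of Proposition~\ref{concatenationVolume} to reduce the bounds on $F_{X_\lambda}$ to elementary pointwise inequalities on $[0,1]$. In the ascending case, since the first run has length $R$ and ends at a peak, the first row of $\lambda$ contains exactly $R+1$ cells and I can write $\lambda = (R+1) - \mu$ for a non-empty composition $\mu$ whose first run is descending. Applying Proposition~\ref{concatenationVolume} with $\epsilon = -$ and marginalising in the second variable gives
\begin{equation*}
d_{X_\lambda}(x) \;=\; \frac{1}{C}\int_x^1 d_{X_{(R+1)},\,Y_{(R+1)}}(x,u)\, F_{X_\mu}(u)\, du, \qquad C \;=\; \mathbb{P}(Y_{(R+1)} \geq X_\mu),
\end{equation*}
where $X_\mu$ is taken independent of $(X_{(R+1)}, Y_{(R+1)})$.

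I would then identify $(X_{(R+1)}, Y_{(R+1)})$ with the joint law of the minimum $W$ and maximum $V$ of $R+1$ i.i.d.\ uniform variables on $[0,1]$; this matches the explicit density $R(R+1)(u-x)^{R-1}\mathbf{1}_{x<u}$ read off from \eqref{densitySawtooth}. A short inclusion-exclusion computation yields
\begin{equation*}
\mathbb{P}(W \leq t,\, V \geq s) \;=\; 1 - (1-t)^{R+1} - s^{R+1} + (s-t)_+^{R+1}, \qquad \mathbb{P}(V \geq s) \;=\; 1 - s^{R+1}.
\end{equation*}
Averaging over $s = X_\mu$ and dividing by the normalisation gives the closed form
\begin{equation*}
F_{X_\lambda}(t) \;=\; \frac{1 - (1-t)^{R+1} - a + b(t)}{1 - a}, \qquad a = \mathbb{E}[X_\mu^{R+1}], \quad b(t) = \mathbb{E}[(X_\mu - t)_+^{R+1}].
\end{equation*}

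A direct algebraic rearrangement shows that the upper bound $F_{X_\lambda}(t) \leq 1 - (1-t)^{R+1}$ is equivalent to $b(t) \leq a(1-t)^{R+1}$, while the lower bound $F_{X_\lambda}(t) \geq 1 - (1-t)^R$ is equivalent to $b(t) \geq (1-t)^R(a-t)$. Both reduce to pointwise inequalities in $x \in [0,1]$ that are then averaged against the law of $X_\mu$. For the upper bound, $(x-t)_+ \leq (1-t)x$ (trivial for $x \leq t$ and following from $x \leq 1$ when $x > t$) raised to the power $R+1$ suffices. For the lower bound, I would focus on $x \in [t,1]$ and set $f(x) = (x-t)^{R+1} - (1-t)^R(x^{R+1} - t)$; the derivative $f'(x) = (R+1)\bigl[(x-t)^R - ((1-t)x)^R\bigr]$ is strictly negative on $[t,1)$ because $x-t < (1-t)x$ there, so combined with $f(1) = 0$ one obtains $f \geq 0$ on $[t,1]$. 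The remaining range $x \in [0,t)$ is handled by $x^{R+1} \leq x < t$, which makes the right-hand side non-positive.

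The descending case follows from the ascending one by applying the change of variables $U \mapsto 1-U$ on $[0,1]^{\vert\lambda\vert}$: this map preserves the uniform law and turns each descent into a non-descent, so if $\lambda^{c}$ denotes the composition with descent set $\{1,\dots,\vert\lambda\vert-1\} \setminus D_\lambda$, then $1 - X_\lambda$ is distributed as $X_{\lambda^{c}}$, and $\lambda^{c}$ has an ascending first run of the same length $R$. Applying the ascending bounds to $\lambda^{c}$ and substituting $t \mapsto 1-t$ gives the claimed $t^{R+1} \leq F_{X_\lambda}(t) \leq t^R$. The genuinely delicate step is the pointwise lower bound $(x-t)_+^{R+1} \geq (1-t)^R(x^{R+1}-t)$: this is where the exponent drops from $R+1$ to $R$, no straightforward termwise inequality delivers it, and the monotonicity argument on $f$ above seems unavoidable.
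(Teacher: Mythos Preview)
Your argument is correct. One small inaccuracy: you assert that $\mu$'s first run is descending, but if the second run of $\lambda$ has length $1$ the first cell of $\mu$ is a valley and $\mu$'s first run is ascending. This is harmless, since you never actually use that claim --- all you need is that $\mu$ is non-empty and that $F_{X_\mu}$ is a genuine distribution function on $[0,1]$, which holds in every case (including $\mu=\square$).

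The paper itself does not prove this proposition: it quotes it as Corollary~2 of the companion paper \cite{sawtooth}. Judging from the surrounding material, the proof there goes through a general stochastic-domination framework for the densities \eqref{densitySawtooth}: one sets up operators $\Gamma^{\pm}$ encoding the effect of appending a run, shows (via a log-concavity argument) that they preserve stochastic order, and then sandwiches $X_\lambda$ between explicit Beta-type laws with distribution functions $1-(1-t)^{R}$ and $1-(1-t)^{R+1}$. Your route is different and more elementary: you write $\lambda=(R+1)-\mu$, identify $(X_{(R+1)},Y_{(R+1)})$ with the min/max of $R+1$ independent uniforms, obtain the closed form
\[
F_{X_\lambda}(t)=\frac{1-(1-t)^{R+1}-a+b(t)}{1-a},
\]
and reduce both bounds to the two pointwise inequalities $(x-t)_+^{R+1}\le((1-t)x)^{R+1}$ and $(x-t)_+^{R+1}\ge(1-t)^R(x^{R+1}-t)$ on $[0,1]$. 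The \cite{sawtooth} machinery is more structural and feeds into later estimates such as Proposition~\ref{evaluateIndependance}; your argument is shorter, fully self-contained, and requires nothing beyond Proposition~\ref{concatenationVolume}.
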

The same result holds for $G_{\lambda}$ after exchanging the increasing run and the decreasing run. Using the above proposition, we can prove the following estimates on $\Delta(\lambda,\mu)$.
\begin{lem}\label{boundDelta}
Denote by $L$ the size of the last run of $\lambda$ and by $R$ the size of the first run of $\mu$. Set $\vartheta_{1}=+$ if the last run of $\lambda$ is increasing and $\vartheta_{1}=-$ otherwise, and set similarly $\vartheta_{2}=+$ if the first run of $\mu$ is increasing and $\vartheta_{2}=-$ otherwise. Then, 
\begin{itemize}
\item if $\vartheta_{1}=+,\vartheta_{2}=+$,
$$\frac{1}{R+3}\leq \Delta(\lambda,\mu)\leq \frac{1}{R+1};$$
\item if $\vartheta_{1}=-,\vartheta_{2}=+$,
$$ \frac{1}{L+R+3}\leq \Delta(\lambda,\mu)\leq \frac{1}{L+R+1};$$
\item if $\vartheta_{1}=+,\vartheta_{2}=-$,
$$\frac{R+L+2}{R+L+1}-\frac{1}{R+1}-\frac{1}{L+1}\leq \Delta(\lambda,\mu)\leq \frac{R+L+4}{R+L+3}-\frac{1}{R+2}-\frac{1}{L+2};$$
\item if $\vartheta_{1}=-,\vartheta_{2}=-$,
$$\frac{1}{L+3}\leq \Delta(\lambda,\mu)\leq \frac{1}{L+1}.$$
\end{itemize}
\end{lem}
\begin{proof}
In each case, the proof is done by using the inequalities of Proposition \ref{evaluateRun} in the definition of $\Delta(\lambda,\mu)$. Let us detail the proof in the first case, since the three other cases are proven similarly. Since $\vartheta_{1}=+$, applying Proposition \ref{evaluateRun} to the last run of $\lambda$ yields the inequalities
\begin{equation}\label{ineq1}
t^{L+1}\leq G_{\lambda}(t)\leq t^{L}
\end{equation}
for $t\in[0,1]$. Since $\vartheta_{2}=+$, Proposition \ref{evaluateRun} gives similarly
\begin{equation}\label{ineq2}
1-(1-t)^{R}\leq F_{\mu}(t)\leq 1-(1-t)^{R+1}
\end{equation}
for $t\in[0,1]$. Therefore, using \eqref{ineq1} and \eqref{ineq2} in the definition of $\Delta(\lambda,\mu)$ yields
$$\int_{0}^{1}(1-t^{L})(1-t)^{R+1}dt\leq \Delta(\lambda,\mu)\leq \int_{0}^{1}(1-t^{L+1})(1-t)^{R}dt.$$
On the first hand,
\begin{align*}
\int_{0}^{1}(1-t^{L})&(1-t)^{R+1}dt\\
=&\frac{1}{R+2}-\frac{(R+1)!L!}{(R+L+2)!}=\frac{1}{R+2}\left(1-\frac{L!}{(R+3)\dots(L+R+2)}\right).
\end{align*}
Since $1-\frac{L!}{(R+3)\dots(L+R+2)}\geq 1-\frac{1}{R+3}\geq \frac{R+2}{R+3}$, 
$$\int_{0}^{1}(1-t^{L})\big[1-(1-(1-t)^{R+1})\big]dt\geq \frac{1}{R+3}.$$
And on the other hand,
\begin{align*}
\int_{0}^{1}(1-t^{L+1})(1-t)^{R}dt=&\frac{1}{R+1}-\frac{R!(L+1)!}{(R+L+2)!}\\
=&\frac{1}{R+1}\left(1-\frac{(L+1)!}{(R+2)\dots(L+R+2)}\right),
\end{align*}
which is smaller than $\frac{1}{R+1}$.
Finally,
$$\frac{1}{R+3}\leq \Delta(\lambda,\mu)\leq \frac{1}{R+1}.$$

\end{proof}
We denote respectively by $A(\lambda,\mu)$ and by $B(\lambda,\mu)$ the upper bound and the lower bound of $\Delta(\lambda,\mu)$ which have been established in the previous lemma. 
\begin{lem}\label{smallequivequiv}
Let $\epsilon>0$. There exists a constant $\eta>0$ independent of $\lambda$ and $\mu$ such that $B(\lambda,\mu)\leq \eta$ implies that 
$$\frac{A(\lambda,\mu)-B(\lambda,\mu)}{B(\lambda,\mu)}\leq \epsilon.$$
\end{lem}
\begin{proof}
Let $\epsilon>0$. If $(\vartheta_{1},\vartheta_{2})=(+,-)$, then $B(\lambda,\mu)=1-\frac{1}{R+1}-\frac{1}{L+1}+\frac{1}{L+R+1}$. Since $\frac{1}{L+R+1}-\frac{1}{L+1}$ is increasing in $L$,
$$B(\lambda,\mu)\geq 1-\frac{1}{R+1}-\frac{1}{2}+\frac{1}{R+2}\geq \frac{1}{2}-\frac{1}{(R+1)(R+2)}\geq \frac{1}{3}.$$
Thus, choosing $\eta<\frac{1}{3}$ yields trivially the implication of the lemma.\\
If $(\vartheta_{1},\vartheta_{2})\not=(+,-)$, we have $B(\lambda,\mu)=\frac{1}{X+3}$ and $A(\lambda,\mu)=\frac{1}{X+1}$ for $X\in\lbrace R,L,R+L\rbrace$ depending on the value of $(\vartheta_{1},\vartheta_{2})$. Thus, 
\begin{align*}
\frac{A(\lambda,\mu)-B(\lambda,\mu)}{B(\lambda,\mu)}=\frac{X+3}{X+1}-1=\frac{2}{X+1}\leq 4B(\lambda,\mu).
\end{align*}
Therefore, if $B(\lambda,\mu)\leq \frac{\epsilon}{4}$, 
$$\frac{A(\lambda,\mu)-B(\lambda,\mu)}{B(\lambda,\mu)}\leq \epsilon.$$
Finally, setting $\eta=\min\left(\frac{1}{3},\frac{\epsilon}{4}\right)$ yields the result.
\end{proof}
When the last run of $\lambda$ and the first run of $\mu$ remain bounded, the estimates of \cref{evaluateRun} are not so useful. By \cite[Proposition 6.9]{sawtooth}, it is however still possible to show that the distribution of $X_{\lambda}$ depends up to a small error only on the first cells of the composition, provided that the first run is not too big. We reformulate \cite[Proposition 6.9]{sawtooth} by using a convenient distance $D$ on the set of compositions. For two distinct compositions $\lambda,\mu$ of possibly different sizes with first run of the same size and same orientation, set
\begin{align*}
D(\lambda,&\mu)\\
=&\big(\sup \lbrace n\geq 1 \vert  \lambda_{\leq n}=\mu_{\leq n}\text{ and the first run of $\lambda$ is smaller than $n$}\rbrace\big)^{-1},
\end{align*}
and then set $D(\lambda,\lambda)=0$ and $D(\lambda,\mu):=1$ if the first run of $\lambda$ and the one of $\mu$ do not have the same length or same orientation. Remark that this distance is actually ultrametric, since it is readily seen that for three compositions $\lambda,\mu,\nu$ we have
$$D(\lambda,\nu)\leq \max \big(D(\lambda,\mu),D(\mu,\nu)\big).$$
From now on, we consider the set $\Lambda$ of compositions as a metric space with respect to this distance.
\begin{prop}[Prop.$\, 11$, \cite{sawtooth}]\label{evaluateIndependance}
The map 
$$F:(\Lambda,D)\longrightarrow (\mathcal{C}([0,1],\mathbb{R}),\Vert.\Vert_{\infty})$$ mapping a composition $\lambda$ to the cumulative distribution function $F_{\lambda}$ of $X_{\lambda}$ is uniformly continuous.
\end{prop}
We extend now Proposition \ref{evaluateIndependance} to a continuity result for the correlation function $\Delta$.
\begin{lem}\label{unifContDelta}
The map $\log\Delta:\Lambda\times \Lambda\longrightarrow \mathbb{R}$ is uniformly continuous with respect to the second argument. Moreover, for each $\epsilon>0$, there exists a constant of uniform continuity independent of the first argument.
\end{lem}
The above results mean that for all $\epsilon>0$, there exists $\delta>0$ such that for all $(\lambda,\mu_{1}),(\lambda,\mu_{2})\in\Lambda\times\Lambda$ with $D(\mu_1,\mu_2)\leq\delta$, we have 
$$\left\vert\log\Delta(\lambda,\mu_1)-\log\Delta(\lambda,\mu_2)\right\vert\leq\epsilon.$$
\begin{proof}
Let $\epsilon>0$ and let $\eta$ be the constant associated to $\epsilon$ in Lemma \ref{smallequivequiv}. Let $\delta<+\infty$ be a constant of uniform continuity associated to $\epsilon\eta$ for the function $F$ in \cref{evaluateIndependance} and let $\mu,\mu'\in \Lambda$ be two compositions such that $D(\mu,\mu')\leq \delta$. Then, Proposition \ref{evaluateIndependance} yields that $\Vert F_{\mu}-F_{\mu'}\Vert_{\infty}\leq \epsilon\eta$.\\
Let $\lambda$ be a composition. On the one hand, $B(\lambda,\mu)$ (resp. $B(\lambda,\mu')$) only depends on the orientation and the length of the last run of $\lambda$ and the first run of $\mu$ (resp. $\mu'$). On the other hand, since $D(\mu,\mu')\leq \delta$, the first run of $\mu$ and the one of $\mu'$ have same length and same orientation. Therefore, we can set $B:=B(\lambda,\mu)=B(\lambda,\mu')$. For the same reasons, we set $A:=A(\lambda,\mu)=A(\lambda,\mu')$.\\
If $B\leq \eta$, then, by Lemma \ref{smallequivequiv},
$$\frac{\Delta(\lambda,\mu)}{\Delta(\lambda,\mu')}\leq \frac{A}{B}\leq \frac{A-B+B}{B}\leq 1+\epsilon,$$
which yields 
$$\log\big(\Delta(\lambda,\mu)\big)-\log\big(\Delta(\lambda,\mu')\big)\leq \epsilon.$$
Exchanging $\mu$ and $\mu'$ gives finally
$$\left\vert\log\big(\Delta(\lambda,\mu)\big)-\log\big(\Delta(\lambda,\mu')\big)\right\vert\leq \epsilon.$$
Suppose now that $B\geq \eta$. By the choice of $\delta$ and by Proposition \ref{evaluateIndependance}, $\Vert F_{\mu}-F_{\mu'}\Vert_{\infty}\leq \eta\epsilon$. Since $\Vert 1-G_{\lambda}\Vert_{\infty}\leq 1$, this gives
$$\left\vert \Delta(\lambda,\mu')-\Delta(\lambda,\mu)\right\vert=\left\vert\int_{0}^{1}\big(1-G_{\lambda}(t)\big)\big(F_{\mu}(t)-F_{\mu'}(t)\big)dt\right\vert\leq \eta\epsilon.$$
Hence
$$\frac{\Delta(\lambda,\mu)}{\Delta(\lambda,\mu')}\leq \frac{\big\vert \Delta(\lambda,\mu)-\Delta(\lambda,\mu')\big\vert+\Delta(\lambda,\mu')}{\Delta(\lambda,\mu')}\leq 1+\frac{\eta\epsilon}{\eta}\leq 1+\epsilon,$$
which yields 
$$\log\big(\Delta(\lambda,\mu)\big)-\log\big(\Delta(\lambda,\mu')\big)\leq \epsilon.$$
By symmetry, we also have
$$\log\big(\Delta(\lambda,\mu')\big)-\log\big(\Delta(\lambda,\mu)\big)\leq \epsilon,$$
which concludes the proof.
\end{proof}
 
\section{Asymptotic law of $\xi^{w}_{1}$}\label{SectionAsymptotics}
This section is devoted to the asymptotic distribution of $\xi^{w}_{1}$ for $l(w)$ large.
\subsection{Preliminary results}
We first establish several estimates on the position of $1$ in a random standard filling. We write $\mathbb{P}_{\lambda}$ instead to $\mathbb{P}$ to emphasize that the probability is taken with respect to the random standard filling of a composition $\lambda$.

\begin{lem}\label{boundPosition1}
Let $\lambda$ be a composition of $n$, and $a<b$ be two peaks of $\lambda$. Then 
$$\mathbb{P}_{\lambda}(1\in \lambda_{>a,<b})\leq 3\frac{b-a}{n}.$$
\end{lem}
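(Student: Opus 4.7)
The plan is to sum the probability over valleys strictly between $a$ and $b$, apply Corollary \ref{firstStepPos1} to each, and use the explicit lower bounds on $\Delta_v:=\int_0^1(1-F_{Y_{\lambda_{<v}}}(t))(1-F_{X_{\lambda_{>v}}}(t))dt$ that were collected in the proof of Lemma \ref{invariancePosition1}. Since $1$ is necessarily at a valley, the first step is to write
$$\mathbb{P}_\lambda(1\in\lambda_{>a,<b})=\sum_{v\text{ valley},\,a<v<b}\mathbb{P}_\lambda(1\in v).$$
Because $a$ and $b$ are peaks, the extreme cells strictly between them alternate as $v_1,p_1,\dots,v_k$, and the $2k$ intervening runs have lengths $l_1,\dots,l_{2k}$ summing to $b-a$; in particular, for each valley $v_j$ the first run of $\lambda_{>v_j}$ is ascending of length $l_{2j}$, placing us systematically in the cases $\epsilon_2=+$ of Lemma \ref{invariancePosition1}.

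Next I split on $l_{2j-1}$. When $l_{2j-1}\geq 2$, the last run of $\lambda_{<v_j}$ is descending of length $l_{2j-1}-1$, and the $(-,+)$ bound gives $\Delta_{v_j}\geq 1/(l_{2j-1}+l_{2j})$, hence
$$\mathbb{P}_\lambda(1\in v_j)\leq\frac{l_{2j-1}+l_{2j}}{n}.$$
When $l_{2j-1}=1$, the last cell of $\lambda_{<v_j}$ is the peak $p_{j-1}$, so the last run of $\lambda_{<v_j}$ is ascending, placing us in case $(+,+)$; if its length $L$ satisfies $L\geq 2$, Lemma \ref{invariancePosition1} still yields $\Delta_{v_j}\geq 1/(l_{2j}+2)$, whereas if $L=1$ the Lemma's bound degenerates to $0$ and I instead bound $\Delta_{v_j}$ directly from Proposition \ref{evaluateRun}, using $1-F_{Y_{\lambda_{<v_j}}}(t)\geq 1-t$ and $1-F_{X_{\lambda_{>v_j}}}(t)\geq(1-t)^{l_{2j}+1}$ to obtain $\Delta_{v_j}\geq 1/(l_{2j}+3)$. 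In either sub-case, since $l_{2j-1}=1$ and $l_{2j}\geq 1$, the arithmetic check $l_{2j}+3\leq 2(1+l_{2j})$ gives
$$\mathbb{P}_\lambda(1\in v_j)\leq\frac{2(l_{2j-1}+l_{2j})}{n}.$$

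Combining the two main cases and summing yields
$$\mathbb{P}_\lambda(1\in\lambda_{>a,<b})\leq\frac{2}{n}\sum_{i=1}^{2k}l_i=\frac{2(b-a)}{n},$$
which is the desired estimate. The main obstacle will be the degenerate sub-case $l_{2j-1}=L=1$, where the $(+,+)$ bound from Lemma \ref{invariancePosition1} is vacuous; getting past it requires a short direct integration via Proposition \ref{evaluateRun}, and the constant $3$ produced there (instead of the sharper $1$ available in all other sub-cases) is exactly what forces the factor $2$ appearing in the statement.
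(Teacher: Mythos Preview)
Your approach is the same as the paper's: write $\mathbb{P}_\lambda(1\in\lambda_{>a,<b})$ as a sum over the valleys between $a$ and $b$, apply Corollary~\ref{firstStepPos1} to each, bound $1/\Delta_{v_j}$ using the lower bounds collected in the proof of Lemma~\ref{invariancePosition1}, and sum. The paper is terser---it simply asserts $1/\Delta\le l(s_i)+l(s_{i+1})+1\le 2(l(s_i)+l(s_{i+1}))$ without writing out the case distinctions---whereas you attempt the full case analysis.

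There is, however, a mirror case you overlook. You split on whether the \emph{descending} run length $l_{2j-1}$ equals $1$, but you claim throughout that the first run of $\lambda_{>v_j}$ is ascending, i.e.\ $\epsilon_2=+$. This fails precisely when $l_{2j}=1$: then the first cell of $\lambda_{>v_j}$ is $v_j+1=p_j$, a peak of $\lambda$, so the first run of $\lambda_{>v_j}$ is \emph{descending} and one lands in the $\epsilon_2=-$ column of Lemma~\ref{invariancePosition1} instead. The fix is exactly symmetric to your treatment of $l_{2j-1}=1$: when the next descending run has length $\ge 2$, the $(-,-)$ (resp.\ $(+,-)$) bound already gives $1/\Delta_{v_j}\le l_{2j-1}+1=l_{2j-1}+l_{2j}$, and the doubly degenerate sub-case is dispatched by the same direct integration via Proposition~\ref{evaluateRun} that you use for $l_{2j-1}=L=1$. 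Once that mirror case is inserted, your argument is complete. (A minor remark: your run lengths are systematically off by one---the first run of $\lambda_{>v_j}$ has length $l_{2j}-1$, not $l_{2j}$, and similarly on the left---but every such slip is in the conservative direction, so none of your stated inequalities is false.)
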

\begin{proof}
Since $1$ has to be located in a valley of $\lambda$, 
$$\mathbb{P}_{\lambda}(1\in \lambda_{>a,<b})=\sum_{\substack{v\in V\\a<v<b}}\mathbb{P}(1\in v).$$
From \cref{firstStepPos1}, for each $v\in V$, 
$$\mathbb{P}(1\in v)=\frac{1}{n}\frac{1}{\Delta(\lambda_{<v},\lambda_{>v})}.$$
Suppose that $v$ is the valley located at the intersection of the runs $s_{i}$ and $s_{i+1}$ (see  \cref{definitionRun+Extreme} for a precise definition of $s_{i}$ and $s_{i+1}$). By the bounds on $\Delta(\lambda_{<v},\lambda_{>v})$ from \cref{boundDelta},
$$\frac{1}{\Delta(\lambda_{<v},\lambda_{>v})}\leq l(s_{i})+l(s_{i+1})+3\leq 3(l(s_{i})+l(s_{i+1})),$$
where $l(s_{i})$ and $l(s_{i+1})$ are respectively the lengths of $s_{i}$ and $s_{i+1}$.
Summing the latter inequalities over all valleys between $a$ and $b$ yields
$$\mathbb{P}_{\lambda}(1\in \lambda_{>a,<b})\leq \frac{3}{n}\sum_{\substack{v\in V\\a<v<b}} l(s_{i})+l(s_{i+1})\leq \frac{3(b-a)}{n}.$$

\end{proof}
In the following lemma, we use the previous result to give a precise estimate on the probability that $1$ is located in a particular valley $v$ when the composition is large.
\begin{lem}\label{largeSlope}
Let $\epsilon>0$. There exists $n_{0}\geq 1$ such that if $\lambda$ is a composition of size $n$ larger than $n_{0}$, and $v\in \lambda$ is a valley with slope $\mathfrak{s}(v)=\llbracket a+1,b\rrbracket$, then
 
$$\frac{b-a}{n}-\epsilon\leq\mathbb{P}_{\lambda}(1\in v)\leq \frac{b-a}{n}+\epsilon.$$

\end{lem}
\begin{proof}
Let us first find $n_{\epsilon}\geq 1$ such that if $b-a\geq n_{\epsilon}$ and $n\geq n_{\epsilon}$, then 
$$\frac{b-a}{n}-\epsilon\leq\mathbb{P}_{\lambda}(1\in v)\leq \frac{b-a}{n}+\epsilon.$$
Suppose that $b-a\geq 2$. Then, since $v$ is a valley, $v$ belongs to two runs $s_{i}$ and $s_{i+1}$, one of them having a length greater or equal to $2$. Assume without loss of generality that $l(s_{i+1})\geq 2$. This implies that the first run of $\lambda_{>v}$ is increasing. Let $L$ denote the length of the last run of $\lambda_{<v}$, and $R$ the length of the first run of $\lambda_{>v}$.

 If $l(s_{i})=1$, the last run of $\lambda_{<v}$ is increasing. Moreover, in this case, $b-a=R+1$. Thus, \cref{boundDelta} yields
 $$\frac{1}{b-a+2}\leq\Delta(\lambda_{<v},\lambda_{>v})\leq \frac{1}{b-a}.$$
Hence, independently of $L$, there exists $n_{1}$ such that if $l(s_{i})=1$ and $b-a\geq n_{1}$, then $$(1-\epsilon)(b-a)\leq\frac{1}{\Delta(\lambda_{<v},\lambda_{>v})}\leq (1+\epsilon)(b-a).$$

If $l(s_{i})>1$, the last run of $\lambda_{<v}$ is decreasing. Then, $b-a=L+R$ and \cref{boundDelta} yields
$$\frac{1}{b-a+3}\leq \Delta(\lambda_{>v},\lambda_{<v})\leq \frac{1}{b-a+1}.$$
There exists $n_{2}$ such that if $l(s_{i})>1$ and $b-a\geq n_{2}$, then 
$$(1-\epsilon)(b-a)\leq\frac{1}{\Delta(\lambda_{<v},\lambda_{>v})}\leq (1+\epsilon)(b-a).$$
Set $n_{\epsilon}=\max(n_{1},n_{2})$, and suppose that $b-a\geq n_{\epsilon}$ and $n\geq n_{\epsilon}$. From \cref{firstStepPos1}, $\mathbb{P}_{\lambda}(1\in v)=\frac{1}{n\Delta(\lambda_{<v},\lambda_{>v})}$ and thus
$$(1-\epsilon)\frac{b-a}{n}\leq \mathbb{P}_{\lambda}(1\in v)\leq (1+\epsilon)\frac{b-a}{n},$$
which is the expected property for $n_{\epsilon}$.

Define now $n_{0}=\frac{3n_{\epsilon}}{\epsilon}$ and let $\lambda$ be a composition of size larger than $n_{0}$. Let $v$ be a valley of $\lambda$ with slope $\llbracket a+1,b\rrbracket$. If $b-a\geq n_{\epsilon}$, the first part of the proof yields
$$(1-\epsilon)\frac{b-a}{n}\leq \mathbb{P}_{\lambda}(1\in v)\leq (1+\epsilon)\frac{b-a}{n}.$$
If $b-a\leq n_{\epsilon}$, then $\frac{b-a}{n}\leq \frac{\epsilon}{3}$ and by \cref{boundPosition1} $\mathbb{P}(1\in v)\leq \epsilon$. Therefore, in any case,
$$\frac{b-a}{n}-\epsilon\leq \mathbb{P}_{\lambda}(1\in v)\leq \frac{b-a}{n}+\epsilon.$$
\end{proof}
The following lemma estimates the probability that the integer $1$ is situated in some initial part of a composition.
\begin{lem}\label{largeSubcompo}
Let $\epsilon>0$. There exists $n_{0}'\geq 1$ such that if $\lambda$ is a composition of size $n$ larger than $n_{0}'$ and $b$ is a peak of $\lambda$, then
$$\frac{b}{n}-\epsilon\leq\mathbb{P}_{\lambda}(1\in\lambda_{<b})\leq\frac{b}{n}+\epsilon.$$
\end{lem}
\begin{proof}
We suppose without loss of generality that $\epsilon\leq \frac{1}{2}$, so that $1-\epsilon\leq\exp(\epsilon)\leq 1+2\epsilon$.\\
Let $n_{\epsilon}$ be such that $\frac{1}{n_{\epsilon}}$ is a constant of uniform continuity associated to $\epsilon$ for the function $\log\Delta$ in \cref{unifContDelta}, and let $n_{0}'$ be such that $\frac{n_{\epsilon}}{n_{0}'}\leq \epsilon$. Suppose that $\lambda$ is a composition of size $n$ larger than $n_{0}'$.
We divide the proof in two cases. 

Suppose first that $b\geq n-n_{\epsilon}$, which implies that $\frac{n-b}{n}\leq \frac{n_{\epsilon}}{n}\leq\epsilon$. Then,
 $$\mathbb{P}_{\lambda}(1\in \lambda_{<b})=1-\mathbb{P}_{\lambda}(1\in\lambda_{>b}),$$
and by \cref{boundPosition1}, 
$$\mathbb{P}_{\lambda}(1\in\lambda_{>b})\leq \frac{3(n-b)}{n}\leq  3\epsilon.$$
Therefore,
$$\mathbb{P}_{\lambda}(1\in \lambda_{<b})\geq 1-3\epsilon\geq \frac{b}{n}-3\epsilon,$$
and 
$$\mathbb{P}_{\lambda}(1\in\lambda_{<b})\leq 1\leq \frac{b}{n}+\frac{n-b}{n}\leq \frac{b}{n}+\epsilon.$$
If $b\leq n-n_{\epsilon}$, set $t=b+n_{\epsilon}$. Then, for each valley $v\leq b$, $D(\lambda_{>v},\lambda_{>v,<t})\leq\frac{1}{n_{\epsilon}}$ and hence \cref{unifContDelta} yields after exponentiation that 
$$(1-\epsilon)\Delta\big(\lambda_{<v},\lambda_{>v,<t}\big)^{-1}\leq \Delta\big(\lambda_{<v},\lambda_{>v}\big)^{-1}\leq (1+2\epsilon)\Delta\big(\lambda_{<v},\lambda_{>v,<t}\big)^{-1},$$
where we have used that $1-\epsilon\leq \exp(\epsilon)\leq 1+2\epsilon$ when $\epsilon\leq \frac{1}{2}$. By the expression of $\mathbb{P}_{\lambda}(1\in v)$ from \cref{firstStepPos1}, we thus have
$$(1-\epsilon)\frac{t}{n}\mathbb{P}_{\lambda_{<t}}(1\in v)\leq\mathbb{P}_{\lambda}(1\in v)\leq (1+2\epsilon)\frac{t}{n}\mathbb{P}_{\lambda_{<t}}(1\in v).$$
Summing the latter inequalities on all valleys smaller than $b$, we get 
$$(1-\epsilon)\frac{t}{n}\mathbb{P}_{\lambda_{<t}}(1\in\lambda_{<b})\leq \mathbb{P}_{\lambda}(1\in \lambda_{<b})\leq (1+2\epsilon)\frac{t}{n}\mathbb{P}_{\lambda_{<t}}(1\in\lambda_{<b}).$$
Since $\frac{t}{n}=\frac{b+n_{\epsilon}}{n}\leq \frac{b}{n}+\epsilon$, the latter inequality already yields 
$$\mathbb{P}_{\lambda}(1\in \lambda_{<b})\leq \frac{b}{n}+4\epsilon.$$
On the other hand,, by \cref{boundPosition1}, 
$$\mathbb{P}_{\lambda_{<t}}(1\in\lambda_{<b})=1-\mathbb{P}_{\lambda_{<t}}(1\in \lambda_{>b})\geq 1-\frac{3n_{\epsilon}}{t}.$$
Therefore,
$$\mathbb{P}_{\lambda}(1\in \lambda_{<b})\geq(1-\epsilon)\frac{t}{n}(1-\frac{3n_{\epsilon}}{t})\geq  (1-\epsilon)(\frac{b}{n}+\frac{n_{\epsilon}}{n}-\frac{3n_{\epsilon}}{n})\geq \frac{b}{n}-2\epsilon.$$
Finally, we have proven that 
$$\frac{b}{n}-4\epsilon\leq\mathbb{P}_{\lambda}(1\in\lambda_{<b})\leq \frac{b}{n}+4\epsilon,$$
for $\lambda$ larger than $n'_{0}$.
\end{proof}

\subsection{Convergence to a uniform distribution}
We prove now that $\xi^{w}_{1}$, the averaged coordinate of the cell containing $1$ in $\sigma_{w}$ (see \cref{defiXi}), converges in law towards a uniform distribution on $[0,1]$.
\begin{prop}\label{casek=1}
Let $\epsilon>0$. There exists $N\geq 1$ such that for any word $w$ longer that $N$,
$$\Vert F_{\xi^{w}_{1}}-\Id_{[0,1]}\Vert_{\infty}\leq \epsilon,$$
where $F_{\xi^{w}_{1}}$ denotes the cumulative distribution function of $\xi_{1}^{w}$.
\end{prop}
\begin{proof}
Let $0<\epsilon\leq 1$. Since $F_{\xi^{w}_{1}}$ and $\Id_{[0,1]}$ are increasing functions, it is enough to prove that for $s\in [0,1]$ and for $w$ long enough,  
$$\big\vert \mathbb{P}(\xi^{w}_{1}\in [0,s])- s\big\vert\leq \epsilon.$$ 
Let $0<s<1$ and let $n_{0}$, $n_{0}'$ be the constants respectively given by \cref{largeSlope} and by \cref{largeSubcompo} for $\epsilon$. Set $N=\max(1/s,1/\epsilon,n_0,n_{0}')$, and let $w$ be a word of size $n\geq N$, with $\lambda$ its corresponding composition. Let $v_{0}$ denote the last valley of $\lambda$ such that the associated slope $\llbracket a+1,b\rrbracket$ intersects $[0,ns]$, namely $[a+1,b+1]\cap [0,ns]\not=\emptyset$: since $\frac{1}{n}<s$, $v_{0}$ always exists. 

Note that $a$ is either zero or a peak of $\lambda$ and if $1\in \lambda_{< a}$, then $\xi_{1}^{w}\in \left[0,\frac{a}{n}\right[\subset [0,s]$. Thus,
\begin{equation}\label{equation1}\mathbb{P}(\xi^{w}_{1}\in [0,s])=\mathbb{P}_{\lambda}(1\in \lambda_{<a})+\mathbb{P}\big(1\in v_{0}\cap \xi_{1}^{w}\in [0,s]\big).
\end{equation}
On the one hand, because $n\geq n_{0}'$, \cref{largeSubcompo} gives
\begin{equation}\label{equation2}\frac{a}{n}-\epsilon\leq\mathbb{P}_{\lambda}(1\in \lambda_{<a})\leq \frac{a}{n}+\epsilon.
\end{equation}
On the other hand by independence between $\sigma_{w}$ and the random variable $U_{1}$ in the definition of $\xi_{1}^{w}$ (see \cref{defAverageCoord}), 
\begin{align*}
\mathbb{P}\big(1\in v_{0}\cap \xi_{1}^{w}\in [0,s]\big)=&\mathbb{P}_{\lambda}(1\in v_{0})\frac{\Leb\big([0,s]\cap [a/n,b/n]\big)}{(b-a)/n}\\
=&\mathbb{P}_{\lambda}(1\in v_{0})\left(\frac{sn-a}{b-a}\wedge 1\right).
\end{align*}
Since $n\geq n_0$, \cref{largeSlope} yields
\begin{equation}\label{equation3}
\frac{b-a}{n}-\epsilon\leq \mathbb{P}_{\lambda}(1\in v_0)\leq \frac{b-a}{n}+\epsilon.
\end{equation}
Applying \eqref{equation2} and \eqref{equation3} to \eqref{equation1}, we get
\begin{equation}\label{equation4}\frac{a}{n}+\frac{b-a}{n}\left(\frac{sn-a}{b-a}\wedge 1\right)-2\epsilon\leq \mathbb{P}(\xi^{w}_{1}\in [0,s])\leq \frac{a}{n}+\frac{b-a}{n}\left(\frac{sn-a}{b-a}\wedge 1\right)+2\epsilon.
\end{equation}
First, 
\begin{equation}\label{equation5}\frac{b-a}{n}\left(\frac{sn-a}{b-a}\wedge 1\right)\leq \frac{b-a}{n}\;\frac{sn-a}{b-a}\leq s-\frac{a}{n}.
\end{equation}
Then, since $sn<b+1$ (otherwise the slope of the valley following $v_{0}$ would intersect $[0,ns]$), we have
$$\left(\frac{sn-a}{b-a}\wedge 1\right)\geq \frac{sn-a-1}{b-a},$$ 
and the inequality $\frac{1}{n}\leq \epsilon$ yields
\begin{equation}\label{equation6}\frac{b-a}{n}\left(\frac{sn-a}{b-a}\wedge 1\right)\geq \frac{b-a}{n}\,\frac{sn-a-1}{b-a}\geq s-\frac{a}{n} -\epsilon.
\end{equation}
Using \eqref{equation5} and \eqref{equation6} in \eqref{equation4}, we get
$$s-3\epsilon\leq \mathbb{P}(\xi^{w}_{1}\in [0,s])\leq s+3\epsilon,$$
which concludes the proof.

\end{proof}

\section{Martin boundary of $\mathcal{Z}$}\label{SectionmartinBoundary}
This section is devoted to the proof of \cref{mainresult1}, yielding the identification of the Martin boundary of $\mathcal{Z}$ with its minimal boundary.  
\subsection{Generalization of \cref{casek=1}}
Recall that $\xi^{w}(k)$ denotes the random vector $(\xi^{w}_{i})_{1\leq i\leq k}$ (see \cref{defiXi}). We generalize here the result of \cref{casek=1} by proving that, for each $k\geq 1$, $\xi^{w}(k)$ converges in law towards a tuple of independent random variables uniformly distributed on $[0,1]$ as the length of $w$ goes to infinity. Let $F_{k}$ be the cumulative distribution function of a $k$-tuple of independent uniform random variables on $[0,1]$, which satisfies $F_{k}(s_{1},\ldots,s_{k})=\prod_{i=1}^{k} s_{i}$ for $s_{1},\ldots,s_{k}\in[0,1]$. 
\begin{prop}\label{generalCase}
Let $\epsilon>0$. There exists $N_{k}\in\mathbb{N}$ such that for any word $w$ longer than $N_{k}$,
$$\Vert F_{\xi^{w}(k)}-F_{k}\Vert_{\infty}\leq \epsilon,$$
where $F_{\xi^{w}(k)}$ denotes the cumulative distribution function of $\xi^{w}(k)$.
\end{prop}
The proof of this generalization is done by induction, by conditioning $\xi^{w}_{k}$ on $\xi^{w}(k-1)$. We first prove several preliminary results.  Throughout this section and unless stated otherwise, $k\geq 2$ is a fixed integer. Let $\lambda_{1},\ldots,\lambda_{r+1}$, $\mu_{1},\ldots,\mu_{r}$ be compositions such that $\sum_{i=1}^{r}\vert \mu_{i}\vert=k-1$, where $\vert \mu_{i}\vert$ denotes the size of the composition $\mu_{i}$. Set 
\begin{equation}\label{constructionLambda}
\lambda=\lambda_{1}-\mu_{1}+\lambda_{2}-\cdots-\mu_{r}+\lambda_{r+1}.
\end{equation}
We denote by $w_{i}$ the word in $\mathcal{Z}$ corresponding to $\lambda_i$, and we denote by $S:=\{j_{1},\ldots,j_{k-1}\}$ the set of cells of $\lambda$ which correspond to any of the compositions $\mu_{i}$, $1\leq i\leq r$, in \eqref{constructionLambda}. Let $\vec{i}=(i_{1},\ldots,i_{k-1})$ be a permutation of the set $S$: formally, there exists $\sigma\in S_{k-1}$ with $i_{t}=j_{\sigma(t)}$. Denote by $\mathcal{X}_{\vec{i}}$ the event $\big\lbrace \sigma_{w}^{-1}(1)=i_{1},\ldots,\sigma_{w}^{-1}(k-1)=i_{k-1}\big\rbrace$ and suppose that $\mathbb{P}_{\lambda}\big(\mathcal{X}_{\vec{i}}\big)\not=0$.\\
\indent Conditioned on $\mathcal{X}_{\vec{i}}$, the random filling of $\lambda$ consists in sampling a uniformly random multiset $\vec{R}=(R_{1},\ldots,R_{r+1})$ of cardinality $(\vert \lambda_{1}\vert,\ldots,\vert \lambda_{r+1}\vert)$ among $\llbracket k,n\rrbracket$, and then independently filling each subcomposition $\lambda_{1},\ldots,\lambda_{r+1}$ respectively with $R_{1},\ldots, R_{r+1}$. If $v$ is a cell of $\lambda_i$, denote by $\tilde{v}$ the corresponding cell in $\lambda$: namely, $\tilde{v}=v+a_{i}$. Since $k$ is the lowest element of $\llbracket k,n\rrbracket$, for $v\in\lambda_{i}$, $\mathbb{P}_{\lambda}\big(k\in \tilde{v}\vert \mathcal{X}_{\vec{i}}\big)\not=0$ if and only if $v$ is a valley of $\lambda_{i}$, and if this is the case,
$$\mathbb{P}_{\lambda}\big(k\in \tilde{v}\vert \mathcal{X}_{\vec{i}}\big)=\mathbb{P}_{(R_{1},\ldots,R_{r+1})}(k\in R_{i})\mathbb{P}_{\lambda_{i}}(1\in v).$$

The following result shows that the law of $\xi^{w}_{k}$ conditioned on $\mathcal{X}_{\vec{i}}$ can be deduced from the knowledge of the distribution of $\xi^{w_{i}}_{1}$ for each subcomposition $\lambda_{i}$. Recall that the Lévy distance $L(A,B)$ between two real random variables $A$ and $B$ is defined as 
\begin{align*}
L(A,B)=\inf\lbrace &\epsilon>0\vert \mathbb{P}(A\leq s-\epsilon)-\epsilon\leq \mathbb{P}(B\leq s)\leq\mathbb{P}(A\leq s+\epsilon)+\epsilon\\
& \text{ for all } s\in\mathbb{R}\rbrace.
\end{align*}

\begin{lem}\label{reducedCase}
Let $\mathfrak{r}$ be a random variable on $\lbrace 1,\ldots,r+1\rbrace$ such that $\mathbb{P}(\mathfrak{r}=i)=\frac{\vert\lambda_{i}\vert}{n-k+1}$, and suppose that the random variables $\sigma_{w_{i}},1\leq i\leq r+1,$ and $\mathfrak{r}$ are independent. Then,
$$L\left(\big(\xi^{w}_{k}\big\vert \mathcal{X}_{\vec{i}}\big),\left(\frac{a_{\mathfrak{r}}}{n}+\frac{\vert\lambda_{\mathfrak{r}}\vert}{n} \xi^{w_{\mathfrak{r}}}_{1}\right)\right)\leq\frac{2k}{n}.$$
\end{lem}
\begin{proof}
We show the result by coupling $X:=\left(\xi^{w}_{k}\big\vert\mathcal{X}_{\vec{i}}\right)$ and $Y:=\frac{a_{\mathfrak{r}}}{n}+\frac{\vert \lambda_{\mathfrak{r}}\vert}{n}\xi^{w_{\mathfrak{r}}}_{1}$ on a same probability space. For $1\leq i\leq r+1$, denote by $\llbracket x^{\lambda_{i}}(c)+1,y^{\lambda_{i}}(c)\rrbracket$ the slope of a cell $c$ in $\lambda_{i}$ and by $\llbracket x^{\lambda}(\tilde{c})+1, y^{\lambda}(\tilde{c})\rrbracket$ the slope of the corresponding cell $\tilde{c}=a_{i}+c$ in $\lambda$. Remark that $ x^{\lambda}(\tilde{c})=x^{\lambda_{i}}(c)+a_{i}$ if $c$ is not in the first run of $\lambda_{i}$, and else, due to the presence of the composition $\mu_{i-1}$ left to $\lambda_{i}$,
$$\vert x^{\lambda}(\tilde{c})-(x^{\lambda_{i}}(c)+a_{i})\vert \leq \vert \mu_{i-1}\vert\leq k-1.$$
Likewise, $ y^{\lambda}(\tilde{c})=y^{\lambda_{i}}(c)+a_{i}$ if $c$ is not in the last run of $\lambda_{i}$, and else
$$\vert  y^{\lambda}(\tilde{c})-(y^{\lambda_{i}}(c)+a_{i})\vert \leq \vert \mu_{i}\vert\leq k-1.$$
Denote by $Z_{i}$ the random variable $\sigma_{w_{i}}^{-1}(1)$. From the discussion preceding the lemma,
\begin{equation}\label{equalLaw}\left(\sigma_{w}^{-1}(k)\big\vert\mathcal{X}_{\vec{i}}\right)=_{\law}a_{\mathfrak{r}}+\sigma_{w_{\mathfrak{r}}}^{-1}(1)=\tilde{Z}_{\mathfrak{r}}.
\end{equation} 
Modifying if necessary the probability spaces, we can consider that the two previous random variables are equal. Recall that $X:=\left(\xi^{w}_{k}\big\vert\mathcal{X}_{\vec{i}}\right)$ is defined as
$$X=\frac{x^{\lambda}_{k}}{n}+\frac{y^{\lambda}_{k}-x^{\lambda}_{k}}{n}U_{k},$$
where $\llbracket x^{\lambda}_{k}+1,y^{\lambda}_{k}\rrbracket$ denotes the slope of $\left(\sigma_{w}^{-1}(k)\big\vert\mathcal{X}_{\vec{i}}\right)$ in $\lambda$ and $U_{k}$ is a uniform random variable on $[0,1]$ independent from $\sigma_{w}$(see \cref{defiXi}). By \eqref{equalLaw},
$$X=\frac{ x^{\lambda}(\tilde{Z}_{\mathfrak{r}})}{n}+\frac{ y^{\lambda}(\tilde{Z}_{\mathfrak{r}})- x^{\lambda}(\tilde{Z}_{\mathfrak{r}})}{n}U_{k}.$$
Likewise, we can define $\xi^{w_{\mathfrak{r}}}_{1}$ using the same random variable $U_{k}$ with
$$\xi^{w_{\mathfrak{r}}}_{1}=\frac{x^{\lambda_{\mathfrak{r}}}(Z)}{\vert \lambda_{\mathfrak{r}}\vert}+\frac{y^{\lambda_{\mathfrak{r}}}(Z_{\mathfrak{r}})-x^{\lambda_{\mathfrak{r}}}(Z)}{\vert \lambda_{\mathfrak{r}}\vert}U_{k},$$
which yields
$$Y=\frac{a_{\mathfrak{r}}}{n}+\frac{\vert\lambda_{\mathfrak{r}}\vert}{n}\xi^{\lambda_{\mathfrak{r}}}_{1}=\frac{a_{\mathfrak{r}}+x^{\lambda_{\mathfrak{r}}}(Z)}{n}+\frac{y^{\lambda_{\mathfrak{r}}}(Z_{\mathfrak{r}})-x^{\lambda_{\mathfrak{r}}}(Z_{\mathfrak{r}})}{n}U_{k}.$$
Therefore,
\begin{align*}
\vert Y-X\vert =&\left\vert (1-U_{k})\frac{a_{\mathfrak{r}}+x^{\lambda_{\mathfrak{r}}}(Z_{\mathfrak{r}})- x^{\lambda}(\tilde{Z}_{\mathfrak{r}})}{n}+U_{k}\frac{a_{\mathfrak{r}}+y^{\lambda_{\mathfrak{r}}}(Z_{\mathfrak{r}})- y^{\lambda}(\tilde{Z}_{\mathfrak{r}})}{n}\right\vert\\
\leq& \left\vert\frac{a_{\mathfrak{r}}+x^{\lambda_{\mathfrak{r}}}(Z_{\mathfrak{r}})- x^{\lambda}(\tilde{Z}_{\mathfrak{r}})}{n}\right\vert +\left\vert \frac{a_{\mathfrak{r}}+y^{\lambda_{\mathfrak{r}}}(Z_{\mathfrak{r}})- y^{\lambda}(\tilde{Z}_{\mathfrak{r}})}{n}\right\vert \leq \frac{2k}{n}.
\end{align*}
Hence, for $s\in[0,1]$, 
$$\mathbb{P}(X\leq s)\leq \mathbb{P}\left(Y\leq s+\frac{2k}{n}\right)\quad\text{and}\quad \mathbb{P}(X\leq s)\geq \mathbb{P}\left(Y\leq s-\frac{2k}{n}\right),$$
which implies that $L(X,Y)\leq \frac{2k}{n}$.
\end{proof}
In the following lemma, $F_{(\xi_{k}^{w}\vert \mathcal{X}_{\vec{i}})}$ denotes the cumulative distribution function of $(\xi_{k}^{w}\vert \mathcal{X}_{\vec{i}})$.
\begin{lem}\label{mainPart}
Let $\epsilon>0$. There exists $n_{k}\geq 1$ such that for $w\in \mathcal{Z}$ of length $n\geq n_{k}$ and for $\vec{i}\in\llbracket 1,n\rrbracket ^{k-1}$ such that $\mathbb{P}_{\lambda}(\mathcal{X}_{\vec{i}})\not=0$,
$$\Vert F_{(\xi_{k}^{w}\vert \mathcal{X}_{\vec{i}})}-\Id_{[0,1]}\Vert_{\infty}\leq \epsilon.$$
\end{lem}
\begin{proof}
Set $n_{k}=\max(\frac{N}{\epsilon},\frac{k}{\epsilon})+(k-1)$, where $N$ is given by \cref{casek=1}, and suppose that $w\in\mathcal{Z}$ is a word of length $n\geq n_{k}$. Let $s\in[0,1]$ with $s\geq \frac{k}{n}$ and let $i$ be the last integer such that $[0,ns]\cap[a_{i},b_{i}]\not=0$ (recall that $a_{i}$ and $b_{i}$ are such that $\lambda_{i}=\lambda_{>a_{i},\leq b_{i}}$). Such an integer exists because $a_{1}\leq k$ and $ns\geq k$. Note that if $j<i$, then $b_{j}<a_{i}<ns$ and thus $\frac{a_{j}}{n}+\frac{b_{j}-a_{j}}{n}\xi_{1}^{w_{j}}\leq s$; similarly, if $j>i$, then $\frac{a_{j}}{n}+\frac{b_{j}-a_{j}}{n}\xi_{1}^{w_{j}}>s$. Set 
$$Y:=\frac{a_{\mathfrak{r}}}{n}+\frac{\vert\lambda_{\mathfrak{r}}\vert}{n} \xi^{w_{\mathfrak{r}}}_{1}.$$ 
Then,
\begin{align*}
\mathbb{P}(Y\leq s)=&\mathbb{P}(\mathfrak{r}<i)+\mathbb{P}\left[\mathfrak{r}=i\cap\left(\frac{a_{i}}{n}+\frac{b_{i}-a_{i}}{n}\xi_{1}^{w_{i}}\leq s\right)\right]\\
=&\mathbb{P}(\mathfrak{r}<i)+\mathbb{P}(\mathfrak{r}=i)\mathbb{P}\left(\xi_{1}^{w_{i}}\leq \frac{ns-a_{i}}{b_{i}-a_{i}}\right):=C_{1}+C_{2},
\end{align*}
where $C_{1}=\mathbb{P}(\mathfrak{r}<i)$ and $C_{2}=\mathbb{P}\left(\mathfrak{r}=i)\mathbb{P}(\xi_{1}^{w_{i}}\leq \frac{ns-a_{i}}{b_{i}-a_{i}}\right)$. Note first that a counting argument shows that $\mathbb{P}(\mathfrak{r}=j)=\frac{b_{j}-a_{j}}{n-k+1}$, so that
\begin{align*}
\left\vert\mathbb{P}(\mathfrak{r}<i)-\frac{a_{i}}{n-k+1}\right\vert=&\left\vert \frac{1}{n-k+1}\sum_{j<i}(b_{j}-a_{j})-\frac{a_{i}}{n-k+1}\right\vert\\
=&\frac{\#\lbrace 1\leq t\leq k-1, i_{t}\leq a_{i}\rbrace}{n-k+1}\leq\frac{k}{n-k+1}.
\end{align*}
Since $n\geq \frac{k}{\epsilon}+(k-1)$,
\begin{equation}\label{step2}
\left\vert C_{1}-\frac{a_{i}}{n-k+1}\right\vert\leq \epsilon.
\end{equation}
If $ns>b_{i}$, then $\mathbb{P}\left(\xi_{1}^{w_{i}}\leq \frac{ns-a_{i}}{b_{i}-a_{i}}\right)=1$. By the choice of $i$, $ns\leq b_{i}+\vert \mu_{i}\vert$ which yields $\vert ns-b_{i}\vert \leq k$. Thus, since $n\geq \frac{k}{\epsilon}+(k-1)$,
\begin{align*}
\left\vert \mathbb{P}(\mathfrak{r}=i)\mathbb{P}\left(\xi_{1}^{w_{i}}\leq \frac{ns-a_{i}}{b_{i}-a_{i}}\right)-\frac{ns-a_{i}}{n-k+1}\right\vert=&\left\vert \frac{b_{i}-a_{i}}{n-k+1}-\frac{ns-a_{i}}{n-k+1}\right\vert\\
=&\frac{ns-b_{i}}{n-k+1}\leq \frac{k}{n-k+1}\leq \epsilon.
\end{align*}
If $ns\leq b_{i}$ and $b_{i}-a_{i}\leq N$, since $n\geq \frac{N}{\epsilon}+(k-1)$, 
$$\left\vert \frac{b_{i}-a_{i}}{n-k+1}\mathbb{P}\left(\xi_{1}^{w_{i}}\leq \frac{ns-a_{i}}{b_{i}-a_{i}}\right)-\frac{ns-a_{i}}{n-k+1}\right\vert\leq 2\left \vert \frac{b_{i}-a_{i}}{n-k+1}\right\vert\leq 2\epsilon.$$
If $ns\leq b_{i}$ and $b_{i}-a_{i}> N$, then $w_{i}$ is longer than $N$ and \cref{casek=1} yields that
\begin{align*}
\bigg\vert \frac{b_{i}-a_{i}}{n-k+1}\mathbb{P}\left(\xi_{1}^{w_{i}}\leq \frac{ns-a_{i}}{b_{i}-a_{i}}\right)&-\frac{ns-a_{i}}{n-k+1}\bigg\vert\\
= &\frac{b_{i}-a_{i}}{n-k+1} \left\vert F_{\xi_{1}^{w_{i}}}\left(\frac{ns-a_{i}}{b_{i}-a_{i}}\right)-\frac{ns-a_{i}}{b_{i}-a_{i}}\right\vert \leq \epsilon.
\end{align*}
Thus, in any case,
\begin{equation}\label{step3}
\left\vert C_{2}-\frac{ns-a_{i}}{n-k+1}\right\vert\leq 2\epsilon.
\end{equation}
Therefore, 
\begin{align*}
\vert \mathbb{P}(Y\leq s)-s\vert\leq&\left\vert C_{1}-\frac{a_{i}}{n-k+1}\right\vert +\left\vert C_{2}- \frac{ns-a_{i}}{n-k+1}\right\vert +\left\vert \frac{k-1}{n-k+1}s\right\vert \\
\leq& \epsilon+2\epsilon+\epsilon\leq 4\epsilon.
\end{align*}
By \cref{reducedCase}, $L\left(\big(\xi^{w}_{k}\vert\mathcal{X}_{\vec{I}}\big),Y\right)\leq \frac{2k}{n}$. Thus, for $n\geq n_{k}$ and $s\geq \frac{3k}{n}$, since $\frac{k}{n}\leq \epsilon$,
$$\mathbb{P}\big((\xi^{w}_{k}\vert\mathcal{X}_{\vec{i}}\big)\leq s)\leq \mathbb{P}(Y\leq s+\frac{2k}{n})+\frac{2k}{n}\leq s+8\epsilon,$$
and similarly $\mathbb{P}\big((\xi^{w}_{k}\vert\mathcal{X}_{\vec{i}}\big)\leq s)\geq s-8\epsilon$, so that finally for $s\geq \frac{3s}{n}$,
$$\left\vert F_{\xi^{w}_{k}\vert \mathcal{X}_{\vec{i}}}(s)-s\right\vert \leq 8\epsilon.$$
If $s\leq \frac{3k}{n}$, then
\begin{align*}
\left\vert F_{\xi^{w}_{k}\vert \mathcal{X}_{\vec{i}}}(s)-s\right\vert \leq &  F_{\xi^{w}_{k}\vert \mathcal{X}_{\vec{i}}}(\frac{3k}{n})+\frac{3k}{n}\\
\leq & \frac{3k}{n}+8\epsilon+\frac{3k}{n}\leq 14\epsilon.
\end{align*}
\end{proof}

The proof of \cref{generalCase} is now done by induction.
\begin{proof}[Proof of \cref{generalCase}]
Let us prove by induction on $k\geq 1$ that for $\epsilon>0$, there exists $N_{k}\in\mathbb{N}$ such that for $n\geq N_{k}$ and $w\in \mathcal{Z}$ with $l(w)= n-1$,
$$\Vert F_{\xi^{w}(k)}-F_{k}\Vert_{\infty}\leq \epsilon.$$
For $k=1$, the result is given by \cref{casek=1}. Let $k>1$ and suppose that the result is proven for $k-1$. There exists $N_{k-1}\geq 1$ such that, for $n\geq N_{k-1}$,
$$\Vert F_{\xi^{w}(k-1)}-F_{k-1}\Vert_{\infty}\leq \epsilon.$$
 Suppose that $n\geq \max(N_{k-1},n_{k})$, where $n_{k}$ is given in \cref{mainPart}. For $s_{1},\ldots,s_{k}\in[0,1]$,
\begin{equation}\label{expresCumuFunction}
F_{\xi^{w}(k)}(s_{1},\ldots,s_{k})=\mathbb{E}\left(\mathbb{P}\big(\xi^{w}_{k}\leq s_{k}\vert \xi^{w}(k-1))\mathbf{1}_{\xi^{w}_{1}\leq s_{1},\ldots,\xi^{w}_{k-1}\leq s_{k-1}}\right).
\end{equation}
By \cref{fixedPosition},  different values of $\big(\sigma_{w}^{-1}(i)\big)_{1\leq i\leq k-1}$ yield disjoint supports for $\xi^{w}(k-1)$. Thus, $\big(\sigma_{w}^{-1}(i)\big)_{1\leq i\leq k-1}$ is $\xi^{w}(k-1)$-measurable. Hence,
\begin{equation}\label{measureImplyInv}
\big(\xi^{w}_{k}\big\vert \xi^{w}(k-1)\big)=\left(\xi^{w}_{k}\Big\vert \xi^{w}(k-1),\big(\sigma_{w}^{-1}(i)\big)_{1\leq i\leq k-1}\right).
\end{equation}
Recall that 
$$\xi^{w}(k)=\left(\frac{x\big(\sigma_{w}^{-1}(i)\big)}{n}+\frac{y\big(\sigma_{w}^{-1}(i)\big)-x\big(\sigma^{-1}_{w}(i)\big)}{n}U_{i}\right)_{1\leq i\leq k},$$
where $(U_{i})_{i\geq 1}$ is a sequence of independent random variables uniformly distributed on $[0,1]$ and independent of $\sigma_{w}$. Thus, conditioned on the value of $\big(\sigma_{w}^{-1}(i)\big)_{1\leq i\leq k-1}$, the random variables $\xi^{w}_{k}$ and $\xi^{w}(k-1)$ are independent. Therefore, by Doob's conditional independence property \cite[Proposition 5.6]{kallenberg2006foundations},
\begin{equation}\label{DoobIndepProper}
\left(\xi^{w}_{k}\Big\vert \xi^{w}(k-1),\big(\sigma_{w}^{-1}(i)\big)_{1\leq i\leq k-1}\right)=\left(\xi^{w}_{k}\Big\vert \big(\sigma_{w}^{-1}(i)\big)_{1\leq i\leq k-1}\right).
\end{equation}
Recall that $\mathcal{X}_{\vec{j}}$ denotes the event $\left\lbrace\big(\sigma_{w}^{-1}(i)\big)_{1\leq i\leq k-1}=\vec{j}\right\rbrace$ for $\vec{j}\in\llbracket 1,n\rrbracket^{k-1}$. By \cref{mainPart}, since $n\geq n_{k}$,
$$\big\vert\mathbb{P}\big(\xi^{w}_{k}\leq s_{k}\big\vert \mathcal{X}_{\vec{j}}\big)-s_{k}\big\vert \leq \epsilon$$
for all $\vec{j}$ such that $\mathbb{P}(\mathcal{X}_{\vec{j}}>0)$. This implies that
$$\left \vert \mathbb{P}\left(\xi^{w}_{k}\leq s_{k}\Big\vert \big(\sigma_{w}^{-1}(i)\big)_{1\leq i\leq k-1}\right)-s_{k}\right\vert \leq \epsilon.$$
Hence, by \eqref{measureImplyInv} and \eqref{DoobIndepProper}, for $n\geq n_{k}$,
$$\left\vert \mathbb{P}\big(\xi^{w}_{k}\leq s_{k}\big\vert \xi^{w}(k-1)\big)-s_{k}\right\vert\leq \epsilon,$$
which yields
$$\Bigg\vert\mathbb{E}\left(\mathbb{P}\big(\xi^{w}_{k}\leq s_{k}\big\vert \xi^{w}(k-1)\big)\prod_{i=1}^{k-1}\mathbf{1}_{\xi^{w}_{i}\leq s_{i}}\right)-s_{k}F_{\xi^{w}(k-1)}(s_{1},\ldots,s_{k-1})\Bigg\vert\leq \epsilon.$$
Therefore, for $n\geq \max(N_{k-1},n_{k})$,
\begin{align*}
\big\vert F_{\xi^{w}(k)}(s_{1},\ldots,s_{k})-&F_{k}(s_{1},\ldots,s_{k})\big\vert\\
\leq &\epsilon+\big\vert s_{k}F_{\xi^{w}(k-1)}(s_{1},\ldots,s_{k-1})- F_{k}(s_{1},\ldots,s_{k})\big\vert\\
\leq&\epsilon+s_{k} \big\vert F_{\xi^{w}(k-1)}(s_{1},\ldots,s_{k-1})-s_{1}\cdots s_{k-1} \big\vert
\leq 2\epsilon.
\end{align*}

\end{proof}
\subsection{Proof of \cref{mainresult1}}
\begin{proof}
By the discussion of \cref{ExplanationProof}, it suffices to prove that for any sequence of words $w_{n}$ in $\mathcal{Z}$ with increasing length and any element $(U_{\uparrow},U_{\downarrow})$ in $\mathcal{U}^{(2)}$, the convergence of $\big(U_{\uparrow}(w_{n}),U_{\downarrow}(w_{n})\big)$ towards $(U_{\uparrow},U_{\downarrow})$ in $\mathcal{U}^{(2)}$ implies the convergence of $w_{n}$ towards $\Phi(U_{\uparrow},U_{\downarrow})$ in 
$\hat{\mathcal{Z}}$. Recall that the latter convergence is equivalent to the convergence of $\mathbb{P}_{w_{n}}(\Gamma_{\tau})$ towards $\mathbb{P}_{(U_{\uparrow},U_{\downarrow})}(\Gamma_{\tau})$ for all finite paths $\tau\in\Gamma_{f}(\mathcal{Z})$.

Let $(w_{n})_{n\geq 1}$ be a sequence of words in $\mathcal{Z}$ and let $U=(U_{\uparrow},U_{\downarrow})\in\mathcal{U}^{(2)}$ be such that $l(w_{n})=n-1$ for all $n\geq 1$ and such that $\big(U_{\uparrow}(w_{n}),U_{\downarrow}(w_{n})\big)$ converges to $U$ in $\mathcal{U}^{(2)}$ as $n$ goes to infinity. By \cref{similarPaintbox}, $\tilde{U}_{w_{n}}$ converges also to $U$, where $\tilde{U}_{w_{n}}$ is the run paintbox defined for the word $w_{n}$ in \cref{RunPaintbox}.

Let $\tau\in\Gamma_{f}(\mathcal{Z})$ be a finite path of length $k$. By the equivalence between paths and permutations given in \cref{EquivalencePathsArrangements}, $\tau$ corresponds to a permutation $\sigma_{\tau}\in S_{k}$ in such a way that for $n\geq k$,
$$\mathbb{P}_{w_{n}}\big(\Gamma_{\tau}\big)=\mathbb{P}\big((\sigma_{w_{n}})_{\downarrow k}=\sigma_{\tau}\big).$$
By \cref{equivPaintbox},
$$\mathbb{P}\big((\sigma_{w_{n}})_{\downarrow k}=\sigma_{\tau}\big)=\mathbb{P}\Big(\sigma_{\tilde{U}_{w_{n}}}\big(\xi^{w_{n}}(k)\big)=\sigma_{\tau}\Big).$$
By \cref{generalCase}, as $n$ goes to infinity, $\xi^{w_{n}}(k)$ converges in law to a sequence $(X_{1},\ldots, X_{k})$ of uniform independent random variables on $[0,1]$.

Thus, since $\tilde{U}_{w_{n}}\rightarrow U$, by \cref{convergencePaintbox}
$$\sigma_{\tilde{U}_{w_{n}}}\big(\xi^{w}(k)\big)\xrightarrow[\law]{} \sigma_{U}\big(X_{1},\ldots, X_{k}\big)=\sigma_{U}(k).$$
Therefore, since $\mathbb{P}\big(\sigma_{U}(k)=\sigma_{\tau}\big)=\mathbb{P}_{\Phi(U)}\big(\Gamma_{\tau}\big)$,
$$\mathbb{P}_{w_{n}}(\Gamma_{\tau})\xrightarrow[n\rightarrow\infty]{}\mathbb{P}_{\Phi(U)}(\Gamma_{\tau}).$$
Finally, $w_{n}$ converges to $\Phi(U)$ in $\hat{\mathcal{Z}}$.
\end{proof}
\section{Relation with the Young graph}\label{SectionRelationYong}
We explain here the relation between $\mathcal{Z}$ and the Young graph $\mathcal{Y}$ at the level of paths. The relation between the minimal boundaries of both graphs has already been explained in \cite{gnedin2006coherent} using the rings $Sym$ and $QSym$. 
\subsection{The graph $\mathcal{Y}$}\label{presentationYoungGraph}
A partition $\pi$ of size $n$ and length $r$ is a decreasing sequence of positive integers $(\pi_{1}\geq\cdots\geq\pi_{r})$ such that $\sum \pi_{i}=n$.  We denote by $\vert \pi\vert$ the size of $\pi$ and we also write $\pi\vdash n$ when $\vert \pi\vert=n$. We denote by $l(\pi)$ the length of the partition $\pi$. The set of partitions of size $n$ is denoted by $\mathcal{Y}_{n}$, and the set of all partitions is denoted by $\mathcal{Y}$. The set $\mathcal{Y}$ is partially ordered with the order relation $\preceq$ given by $\pi\preceq \tau$ if and only if $l(\pi)\leq l(\tau)$ and for all $1\leq i\leq l(\pi)$, $\pi_{i}\leq \tau_{i}$.

As for compositions, a Young diagram is associated to each partition by drawing $\pi_{1}$ cells on the first row, $\pi_{2}$ cells on the second row and so on, such that the first cell of the row $i+1$ is just above the first cell of the row $i$. A standard filling of $\pi$ is a filling of $\pi$ with elements of $\lbrace 1,\ldots,n\rbrace$, such that the filling is increasing to the right along the rows and to the top along the columns. A partition $\pi$ together with a standard filling is called a standard tableau of shape $\pi$. We denote by $T_{\pi}$ the set of standard tableaux of shape $\pi$. The partition $\pi=(6,3,2)$ and an example of standard filling of the associated diagram is given in \cref{fig3}.
\begin{figure}[!h]
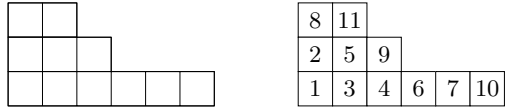

\yng(2,3,6)\hspace{1cm} \young(8<11>,259,13467<10>)  
\caption{\label{fig3}Young diagram of $(6,3,2)$ and an example of standard filling.}
\end{figure}

We say that $\pi\nearrow \tau$  if and only if $\vert\tau\vert=\vert\pi\vert+1$ and $\pi\preceq \tau$. When $T\in T_{\tau}$ is a standard tableau of shape $\tau\vdash n$, $T_{\downarrow}$ is defined as the standard tableau obtained by deleting the cell containing $n$. In particular $T_{\downarrow}$ has a shape $\pi$ such that $\pi\nearrow \tau$.
Adding an edge from $\pi$ to $\tau$ if and only if $\pi\nearrow\tau$ transforms $\mathcal{Y}$ into a graded graph. The latter graph is a major construction for the representation theory of the symmetric groups $(S_{n})_{n\geq 1}$, since the irreducible representations $V_{\tau}$ of $ S_{n}$ are indexed by elements $\tau$ of $\mathcal{Y}_{n}$, and there is a decomposition 
$$\Res(V_{\tau})^{ S_{n}}_{ S_{n-1}}=\bigoplus_{\pi\nearrow \tau} V_{\pi}.$$
The set of paths on $\mathcal{Y}$ between the root $\emptyset$ and a partition $\pi$ is in bijection with the set of standard tableaux of shape $\pi$; thus, an infinite path on $\mathcal{Y}$ can be seen as an infinite standard tableau, which is a sequence $(T_{n})_{n\geq 1}$ of standard tableaux such that $T_{n}=\big(T_{n+1}\big)_{\downarrow}$ for $n\geq 1$. The latter correspondence between infinite paths on $\mathcal{Y}$ and infinite standard tableaux is similar to the one between infinite paths on $\mathcal{Z}$ and arrangements. Each element of $\partial_{\min}\mathcal{Y}$ yields a random infinite path on the graph $\mathcal{Y}$, or equivalently a random infinite standard tableau. The minimal and Martin boundaries of $\mathcal{Y}$ have been intensively studied (see \cite{kerov1998boundary},\cite{kerov2003asymptotic},\cite{thoma1964unzerlegbaren}) and fully described. In particular, Vershik and Kerov \cite{kerov1986characters} have proven that the minimal boundary $\partial_{\min}\mathcal{Y}$ is equal to the Martin boundary $\partial_{M}\mathcal{Y}$ and homeomorphic to the space $$\Delta^{(2)}=\big\lbrace (a_{1}\geq a_{2}\geq \ldots\geq 0),(b_{1}\geq b_{2}\geq \ldots\geq 0),\sum a_{j}+b_{j}\leq1\big\rbrace,$$
considered with the topology of the pointwise convergence. Moreover, $\mathcal{Y}$ admits a geometric realization with the map $g_{\mathcal{Y}}:\mathcal{Y}\longrightarrow \Delta^{(2)}$ given by 
$$g_{\mathcal{Y}}\big((\pi_{1},\ldots,\pi_{r})\big)=\left(\frac{1}{n}\pi_{1}\geq \ldots\geq\frac{1}{n}\pi_{r}\geq 0,\frac{1}{n}\pi'_{1}\geq  \ldots\geq\frac{1}{n}\pi'_{r'}\geq 0\right),$$ 
where $n$ is the size of $\pi$ and
$\pi'$ is the conjugate partition of $\pi$.

 \subsection{Robinson\--Schensted\--Knuth algorithm and the projection $\Gamma(\mathcal{Z})\longrightarrow\Gamma(\mathcal{Y})$}
In this section, we construct the map $\Pi$ of \cref{mainResultYoung} by using the Robinson\--Schensted\--Knuth (or simply RSK) algorithm. Let us first recall this algorithm in the special case of permutations. Initiated by Robinson in \cite{zbMATH03030977} and created by Schensted in \cite{schensted1961longest}, it establishes a bijection between permutations of $n$ and pairs of standard tableaux of size $n$ with the same shape. The algorithm has been later extended to a more general framework by Knuth \cite{knuth1970permutations}. Let $\sigma=\big(\sigma(1),\ldots,\sigma(n)\big)\in S_{n}$. The algorithm constructs a pair of standard tableaux from $\sigma$ as follows. 
\begin{enumerate}
\item Start with an infinite array $A^{0}=(a_{k,l}^{0})_{k,l\geq 1}$ such that each cell is filled with the entry $n+1$ (namely, $a_{k,l}^{0}=n+1$ for all $k,l\geq 1$), and an infinite array $B=(b_{k,l})_{k,l\geq 1}$ such that each cell is empty ($B$ is called the recording tableau).
\item At each step $i$, $1\leq i\leq n$, the following insertion is done on the array $A^{i-1}$:
\begin{itemize}
\item let $(1,l_{1})$ be the first cell (starting from the left) on the first row of $A^{i-1}$ such that $\sigma(i)\leq a^{i-1}_{1,l_{1}}$. Set $a^{i}_{1,l_{1}}=\sigma(i)$;
\item let $(2,l_{2})$ be the first cell on the second row of $A^{i-1}$ such that \\$a^{i-1}_{1,l_{1}}\leq a^{i-1}_{2,l_{2}}$. Set $a^{i}_{2,l_{2}}=a^{i-1}_{1,l_{1}}$;
\item continue the process until the step $k_{0}$ where $a^{i-1}_{k_{0},l_{k_{0}}}>n$. For $k> k_{0}$ or $k\leq k_{0}, l\not= l(k)$, define $a^{i}_{k,l}=a^{i-1}_{k,l}$. Return $A^{i}=(A^{i}_{k,l})_{k,l\geq 1}$. Set $b_{k_{0},l_{k_{0}}}=i$.
\end{itemize}
\item Let $P(\sigma)$ be the part of the array $A^{n}$ containing entries less than or equal to $n$, and $Q(\sigma)$ the part of the array $B$ consisting of non-empty cells. Then, $P(\sigma)$ and $Q(\sigma)$ are two standard tableaux of the same shape which are respectively called the insertion tableau and the recording tableau of $\sigma$.
 \end{enumerate}
 
 \begin{thm}[\cite{zbMATH03030977,schensted1961longest}]\label{RSK}
 The map $S:\sigma\mapsto \big(P(\sigma),Q(\sigma)\big)$ is a bijection between $ S_{n}$ and pairs of standard tableaux of $n$ of the same shape. Moreover 
 $$\big(P(\sigma^{-1}),Q(\sigma^{-1})\big)=\big(Q(\sigma),P(\sigma)\big).$$
 \end{thm}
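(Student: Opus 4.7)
The plan is to separate the claim into two parts: (i) the well-definedness of the algorithm and the bijection, and (ii) the symmetry $(P(\sigma^{-1}),Q(\sigma^{-1}))=(Q(\sigma),P(\sigma))$. These have quite different flavors, so I would treat them independently.

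For part (i), I would first establish by induction on $i$ the invariant that at the end of step $i$ the values $\leq n$ in $A^{i}$ fill a Young diagram on which $P^{i}$ is a standard tableau containing exactly $\{\sigma(1),\dots,\sigma(i)\}$, and that $Q^{i}$ is a standard tableau of the same shape with entries $\{1,\dots,i\}$. The content of the induction is the \emph{bumping-path lemma}: if $\sigma(i)$ is inserted and bumps successively at positions $(1,l_{1}),(2,l_{2}),\dots,(k_{0},l_{k_{0}})$, then $l_{1}\geq l_{2}\geq\dots\geq l_{k_{0}}$. This is immediate from the definition of each $l_{k}$ as the leftmost cell of row $k$ whose entry is at least the incoming value, together with the row-strictness of $P^{i-1}$. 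The weak decrease of the $l_{k}$'s ensures both termination and the fact that the added cell $(k_{0},l_{k_{0}})$ extends the previous shape by a single corner; column-strictness of $P^{i}$ follows by comparing the new entry with the (unchanged) entry directly above it, which was strictly smaller since it had itself been passed over in row $k_{0}-1$.

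Once this is set up, the bijection is obtained by constructing an explicit inverse. Given $(P,Q)$ of common shape $\rho\vdash n$, locate the (necessarily corner) cell of $Q$ carrying the label $n$; say this cell is $(k_{0},l_{k_{0}})$. Remove the entry of $P$ at that cell and reverse-bump upward: for $k=k_{0}-1,\dots,1$, replace the currently carried value by the largest entry of row $k$ strictly smaller than it. The value ejected from row $1$ is $\sigma(n)$, and iterating recovers $\sigma(n-1),\dots,\sigma(1)$. A step-by-step check shows that forward row-insertion and reverse bumping undo each other; this is essentially the same bumping-path inequality read backwards, and gives a two-sided inverse to $S$.

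The main obstacle is part (ii), since the symmetry is not apparent from the row-insertion description. I would prove it using Viennot's geometric construction: plot $\sigma$ as the point set $\{(i,\sigma(i)):1\leq i\leq n\}\subset\mathbb{R}^{2}$ and let the first \emph{shadow line} be the boundary of the union of the upper-right quadrants based at these points. Its corners split into outgoing and incoming pairs, and the ordered $x$-coordinates form the first row of $Q(\sigma)$, while the ordered $y$-coordinates form the first row of $P(\sigma)$. Deleting the points on this shadow line and iterating produces all subsequent rows. The key technical step is to verify that this geometric output agrees with the bumping output; once this is done, the symmetry is transparent, since $\sigma\mapsto\sigma^{-1}$ reflects the point set across the diagonal $y=x$, which preserves the whole shadow-line construction while exchanging the roles of $x$- and $y$-coordinates and hence exchanging $P$ and $Q$.
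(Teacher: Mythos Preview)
The paper does not actually prove this theorem: it is stated with references to the original papers of Schensted and Robinson, and then used as a black box. So there is no ``paper's own proof'' to compare against.

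Your outline is the standard textbook route and is correct as a plan. Part (i) is exactly Schensted's argument: the weak decrease of the bumping path guarantees a standard tableau at every step, and reverse bumping gives the inverse. For part (ii), invoking Viennot's shadow-line (light-and-shadow) construction is one of the cleanest known proofs of the symmetry, and your description of why reflection across $y=x$ swaps $P$ and $Q$ is the right idea. An alternative, closer to the original literature, is Sch\"utzenberger's proof via the growth of tableaux or Knuth's matrix-ball formulation; these avoid the geometric picture but require more bookkeeping. Either way, the only real work you have swept under the rug is the verification that the shadow-line output coincides row by row with the row-insertion output; that identification is where the argument actually lives, and in a full write-up you would need to spell it out (typically by induction on the number of shadow lines, matching the outer corners of the first shadow line with the first-row bumping).
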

From now on, $\Pi(\sigma)$ denotes the shape of $P(\sigma)$ (or $Q(\sigma)$). Recall from \cref{presentationYoungGraph} that $T_{\tau}$ denotes the set of standard fillings of the Young diagram $\tau$, which is isomorphic to the set of paths from $\emptyset$ to $\tau$ on $\mathcal{Y}$. In the following lemma, we use the bijection between paths on $\mathcal{Z}$ and coherent sequences of permutations (see \cref{EquivalencePathsArrangements}) and the RSK algorithm to construct a map from $\Gamma(\mathcal{Z})$ to $\Gamma(\mathcal{Y})$. 
\begin{lem}\label{correspondence}
Let $l\in \mathbb{N}\cup\lbrace \infty\rbrace$ and let $(\sigma_{k})_{1\leq k\leq l}$ be a path on $\mathcal{Z}$. Then, $\big(\Pi(\sigma_{k})\big)_{1\leq k\leq l}$ is a path on $\mathcal{Y}$. The induced map $\Pi:\Gamma(\mathcal{Z})\longrightarrow \Gamma(\mathcal{Y})$ is surjective and satisfies $\Pi(\gamma_{\downarrow k})=\Pi(\gamma)_{\downarrow k}$ for all path $\gamma\in \Gamma(\mathcal{Z})$ and all $k\leq l(\gamma)$.
\end{lem}
\begin{proof}
Let $\sigma\in S_{n}$. It is clear from the RSK algorithm that
$$P(\sigma_{\downarrow})=P(\sigma)_{\downarrow},$$ 
which yields that $\Pi(\sigma_{\downarrow})\nearrow \Pi(\sigma)$. Hence, for any coherent sequence of permutations $(\sigma_{k})_{1\leq k\leq l}$, the sequence $\big(\Pi(\sigma_{k})\big)_{1\leq k\leq l}$ is a well-defined path on $\mathcal{Y}$.

Let $\gamma=(\sigma_{k})_{1\leq k\leq l}\in\Gamma(\mathcal{Z})$ and let $k_{0}\leq l(\gamma)$. Then, 
$\gamma_{\downarrow k_{0}}=(\sigma_{k})_{1\leq k\leq k_{0}}$, so that 
$$\Pi(\gamma_{\downarrow k_{0}})=\big(\Pi(\sigma_{k})\big)_{1\leq k\leq k_{0}}=\left[\big(\Pi(\sigma_{k})\big)_{1\leq k\leq l}\right]_{\downarrow k_{0}}=\big(\Pi(\gamma)\big)_{\downarrow k_{0}}.$$

We prove now the surjectivity of $\Pi$.  The reading word $\sigma(T)$ of a standard tableau $T$ is the permutation obtained by concatenating the rows of $T$ from top to bottom. For example, the reading word of the standard tableau of Figure \ref{fig3} is $(8,11,2,5,9,1,3,4,6,7,10)$. By \cite[Lemma A.1.1.10]{stanley2011enumerative}, $P\big(\sigma(T)\big)=T$ for all standard tableaux $T$. Moreover, it is readily seen that if $T'=T_{\downarrow}$, then $\sigma(T')=\sigma(T)_{\downarrow}$. Let $(\pi_{i})_{1\leq i\leq l}$ be a path on $\mathcal{Y}$ of length $l\in\mathbb{N}\cup \lbrace \infty\rbrace$. Then, for each finite number $k$ such that $k\leq l$, the finite path $(\pi_{i})_{1\leq i\leq k}$ corresponds to a standard tableau $T_{k}$ of shape $\pi_{k}$. Set $\sigma_{k}=\sigma(T_{k})$. Then, $(\sigma_{k})_{1\leq k\leq l}$ is a path on $\mathcal{Z}$ and  for all $k\geq 1$, $\Pi\big(\sigma_{k}\big)=\pi_{k}$. Hence, $\Pi\big((\sigma_{k})_{1\leq k\leq l}\big)=(\pi_{i})_{1\leq i\leq l}$, and the map $\Pi$ is surjective.
 \end{proof}
Remark that \cref{correspondence} corresponds to the first assertion of \cref{mainResultYoung}. In the following lemma, which proves the second assertion of \cref{mainResultYoung}, we use the notion of descent word of a standard tableau. The descent word of a standard tableau $Q$ of size $n$ is the word $w(Q)$ in $\mathcal{Z}_{n}$ such that $w(Q)_{i}=-$ if and only if $i+1$ is in a strictly lower row than $i$ in $Q$. By a standard combinatorial result (see \cite[Lemma 7.23.1]{stanley2011enumerative}), $i$ is a descent of a permutation $\sigma$ (and thus $w(\sigma)_{i}=-$) if and only if $i+1$ is in a strictly lower row than $i$ in $Q(\sigma)$, which implies that $w(\sigma)=w\big(Q(\sigma)\big)$. 
\begin{lem}\label{yieldHarmon}
Let $\omega$ be a harmonic measure on $\Gamma_{\infty}(\mathcal{Z})$. Then, the pushforward of $\omega$ by $\Pi$ is a harmonic measure $\tilde{\Pi}(\omega)$ on $\Gamma_{\infty}(\mathcal{Y})$. Moreover, if $P\in \Gamma_{f}(\mathcal{Y})$ is a finite path of length $k$ ending at $\tau\in \mathcal{Y}$, then
$$\mathbb{P}_{\tilde{\Pi}(\omega)}(\Gamma_{P})=\sum_{\substack{Q\in T_{\tau}}}p_{\omega}\big(w(Q)\big),$$
where $p_{\omega}(w)=\mathbb{P}_{\omega}(\Gamma_{\gamma})$ for any path $\gamma\in\Gamma_{f}(\mathcal{Z})$ ending at $w$.
\end{lem}
\begin{proof}
Let $\omega\in \mathcal{H}_{\infty}(\mathcal{Z})$ and let $P\in\Gamma_{f}(\mathcal{Y})$ be a path of length $k$ ending at $\tau\in\mathcal{Y}_{k}$. Let us first prove the second assertion of the lemma. By \cref{correspondence},
\begin{align*}
\Pi^{-1}(\Gamma_{P})\cap\Gamma_{\infty}(\mathcal{Z})=&\left\lbrace (\sigma_{n})_{n\geq 1}\in\Gamma_{\infty}(\mathcal{Z})\vert \big(\Pi(\sigma_{1}),\ldots,\Pi(\sigma_{k})\big)=P\right\rbrace\\
=&\left\lbrace (\sigma_{n})_{n\geq 1}\in\Gamma_{\infty}(\mathcal{Z})\vert P(\sigma_{k})=P\right\rbrace
=\bigsqcup_{\substack{\sigma\in S_{k}\\P(\sigma)=P}}\Gamma_{\sigma}.
\end{align*}
Hence,
$$\mathbb{P}_{\tilde{\Pi}(\omega)}(\Gamma_{P})=\mathbb{P}_{\omega}\left(\Pi^{-1}\big(\Gamma_{P}\big)\right)=\sum_{\substack{\sigma\in S_{k}\\ P(\sigma)=P}}\mathbb{P}_{\omega}(\Gamma_{\sigma})=\sum_{\substack{\sigma\in S_{k}\\ P(\sigma)=P}}\mathbb{P}_{\omega}\big((\sigma_{\omega})_{\downarrow k}=\sigma\big).$$
Since $\omega$ is harmonic on $\Gamma_{\infty}(\mathcal{Z})$, 
$$\mathbb{P}_{\omega}\big((\sigma_{\omega})_{\downarrow k}=\sigma\big)=p_{\omega}\big(w(\sigma)\big)=p_{\omega}\left[Q\big(w(\sigma)\big)\right].$$
Hence, if $P\in T_{\tau}$, then \cref{RSK} yields
\begin{align*}
\mathbb{P}_{\tilde{\Pi}(\omega)}\big(\Gamma_{P}\big)=&\sum_{\substack{\sigma\in S_{k}\\ P(\sigma)=P}}p_{\omega}\left[w(\big(Q(\sigma)\big)\right]=\sum_{\substack{(P',Q)\in T_{\tau}\\ P'=P}}p_{\omega}\big(w(Q)\big)\\
=&\sum_{Q\in T_{\tau}}p_{\omega}\big(w(Q)\big).
\end{align*}
From the above equation, $\mathbb{P}_{\tilde{\Pi}(\omega)}\big(\Gamma_{P}\big)$ only depends on the endpoint $\tau$ of $P$ (which is the shape of $P$). Thus, $\tilde{\Pi}(\omega)$ is a harmonic measure.
 \end{proof}
Note that we could prove as well that $\Pi$ induces a map $\hat{\Pi}$ from $\mathcal{H}_{f}(\mathcal{Z})$ to $\mathcal{H}_{f}(\mathcal{Y})$. However, contrary to the infinite case that we are studying below, the map $\hat{\Pi}$ is not surjective in the finite case: for example, we can prove by direct computation that the uniform measure on the set of paths on $\mathcal{Y}$ arriving at the partition $(2,2)$ can not be obtained as the projection of a measure in $\mathcal{H}_{f}(\mathcal{Z})$.

Since we already know that the map $\Psi$ is surjective (see the definition of $\Psi$ before \cref{mainResultYoung}), the only thing to prove in the third assertion of \cref{mainResultYoung} is that $\tilde{\Pi}$ coincides with the map $\Psi$ on $\partial_{\min}\mathcal{Z}$. Let us recall first how this map $\Psi$ has been defined in \cite[Section 3]{gnedin2006coherent}. A harmonic measure $\omega$ in $\partial_{\min}\mathcal{Z}$ yields a linear map $\phi_{\omega}:QSym\longrightarrow\mathbb{R}$ such that $\phi_{\omega}(F_{\lambda})=\mathbb{P}_{\omega}(\Gamma_{\tau})$ for any finite path $\tau\in\Gamma_{f}(\mathcal{Z})$ ending on the word $w(\lambda)$. Since $Sym$ is a subring of $QSym$ and since each Schur function $s_{\pi}$ in $Sym$ admits a non-negative decomposition on the basis $\lbrace F_{\lambda}\rbrace_{\lambda\in\Lambda}$, the restriction of the map $\phi_{\omega}$ to $Sym$ satisfies $\phi_{\omega}(s_{\pi})\geq 0$ and $\phi_{\omega}(s_{\pi}s_{(1)})=\phi_{\omega}(s_{\pi})$ for all $\pi\in\mathcal{Y}$. By \cite[Proposition 12]{gnedin2006coherent}, the map $\big(\phi_{\omega}\big)_{\vert Sym}$ yields a harmonic measure $\Psi(\omega)$ in $\partial\mathcal{H}_{\infty}(\mathcal{Y})$ by setting $\mathbb{P}_{\Psi(\omega)}(\Gamma_{P})=\phi_{\omega}(s_{\pi})$ for any path $P\in\Gamma_{f}(\mathcal{Y})$ ending at $\pi$.

\begin{lem}\label{equivPsi}
The map $\Psi$ is equal to the map $\tilde{\Pi}$ restricted to $\partial_{\min}\mathcal{Z}$.
\end{lem}
\begin{proof}
 We use here the notations given before the statement of the lemma. Let $\omega\in\partial_{\min}\mathcal{Z}$ and let $\phi_{\omega}:QSym\longrightarrow \mathbb{R}$ be the corresponding linear form given above. Comparing the definition of $\phi_{\omega}$ and the one of $p_{\omega}$ from \cref{correspondence} yields that $\phi_{\omega}(F_{\lambda})=p_{\omega}\big(w(\lambda)\big)$ for $\lambda\in\Lambda$. Let $P\in\Gamma_{f}(\mathcal{Y})$ be a finite path ending on $\tau\in\mathcal{Y}_{k}$. 
 Then, by \cref{correspondence} and the previous remark,
$$\mathbb{P}_{\tilde{\Pi}(\omega)}(\Gamma_{P})=\sum_{\substack{Q\in T_{\tau}}}\phi_{\omega}\big(F_{\lambda(Q)}\big),$$
where we have written $\lambda(Q)$ instead of $\lambda\big(w(Q)\big)$.
The decomposition of the Schur function $s_{\tau}$ into the basis $\lbrace F_{\lambda}\rbrace_{\lambda\in\Lambda}$ is given by the formula (see for example \cite[Theorem 7]{gessel1984multipartite})
$$s_{\tau}=\sum_{Q\in T_{\tau}}F_{\lambda(Q)}.$$
Hence,
$$\mathbb{P}_{\Psi(\omega)}(\Gamma_{P})=\phi_{\omega}(s_{\tau})=\sum_{Q\in T_{\tau}}\phi_{\omega}\big(F_{\lambda(Q)}\big)
=\mathbb{P}_{\tilde{\Pi}(\omega)}(\Gamma_{P}),$$
and $\tilde{\Pi}(\omega)=\Psi(\omega)$.
\end{proof}
The proof of \cref{mainResultYoung} is just a compilation of \cref{correspondence}, \cref{yieldHarmon} and \cref{equivPsi}.

\section{Law of large numbers and central limit theorem}\label{SectionLawLargeNumber}
The purpose of this section is to prove \cref{lawLageNumbers}. For $\sigma\in S_{n}$, let $f_{\sigma}$ be the piecewise linear function on $[0,1]$ such that $f_{\sigma}(0)=0$, and for $1\leq i\leq n-1$,
$$f_{\sigma}\left(\frac{i}{n-1}\right)-f_{\sigma}\left(\frac{i-1}{n-1}\right)=\left\lbrace \begin{matrix}
-\frac{1}{n-1}&\text{ if $i$ is a descent of $\sigma$}\\
\;\frac{1}{n-1}&\text{ if $i$ is an ascent of $\sigma$.}\\
\end{matrix}\right.$$
Note that equivalently, $f_{\sigma}=f_{g_{\mathcal{Z}}(w(\sigma))}$ with the definition of $f_{g_{\mathcal{Z}}(w)}$ given in \cref{lawLargeNumberCentralLimit}. In order to get the law of large numbers, it is more convenient to prove first the central limit theorem for the Gibbs measure $\Phi_{\mathcal{Z}}(\emptyset,\emptyset)$. The result consists mainly in a mathematical formalization of the results obtained by Oshanin and Voituriez from a physical point of view in \cite{oshanin2004random}. The reader should refer to the latter paper for interesting additional details on the process $\left(f_{\sigma_{(\emptyset,\emptyset)}(n)}\right)_{n\geq 1}$.
\begin{prop}\label{centralLimit}
For $n$ going to infinity, the following convergence holds in distribution in $\mathcal{C}\left([0,1],\Vert.\Vert_{\infty}\right)$:
$$\sqrt{n}f_{\sigma_{(\emptyset,\emptyset)}(n)}\xrightarrow[n\rightarrow \infty]{}\frac{1}{\sqrt{3}}\mathcal{B},$$
where $\mathcal{B}$ denotes a standard Brownian motion on $[0,1]$.
\end{prop}
\begin{proof}
Recall from \cref{paintboxDef} that $\big(\sigma_{\emptyset,\emptyset}(n)\big)$ can be sampled with a family of independent uniform random variables $(X_{i})_{i\geq 1}$ on $[0,1]$ by applying the oriented paintbox construction with the oriented paintbox 
$(\emptyset,\emptyset)$; for $n\geq 1$, the distribution of $\sigma_{\emptyset,\emptyset}(n)$ is just the uniform distribution on $S_{n}$. Since the map $\sigma\mapsto\sigma^{-1}$ preserves the uniform distribution on $S_{n}$, $f_{\sigma_{\emptyset,\emptyset}(n)}\overset{\law}{=} f_{\sigma_{\emptyset,\emptyset}(n)^{-1}}$. As remarked by Oshanin and Voituriez, $\big((n-1)f_{\sigma_{\emptyset,\emptyset}(n)^{-1}},X_{n}\big)_{n\geq 1}$ is a Markov chain: indeed $i$ is a descent of $\sigma_{\emptyset,\emptyset}(n)^{-1}$ if and only if $X_{i}>X_{i+1}$. Therefore, 
$$w\left(\big(\sigma_{\emptyset,\emptyset}(X_{1},\ldots, X_{n+1})\big)^{-1}\right)=w\left(\big(\sigma_{\emptyset,\emptyset}(X_{1},\ldots, X_{n+1})\big)^{-1}\right)\epsilon,$$
 where $\epsilon=+$ if $X_{n}<X_{n+1}$ and $\epsilon=-$ if $X_{n}>X_{n+1}$. In the sequel, $(n-1)f_{\sigma_{\emptyset,\emptyset}(n)}\left(\frac{i}{n-1}\right)$ is denoted by $Y_{i}$ (the subscript $n$ is dropped, since the the latter quantity only depends on $\sigma_{\emptyset,\emptyset}(n)_{\downarrow i+1}$).

For $\sigma\in S_{n}$, denotes by $D(\sigma)$ the set of descents of $\sigma$. By the Markov property, for $R=\llbracket r_{1},r_{1}+r_{2}\rrbracket$ and $S=\llbracket s_{1},s_{1}+s_{2}\rrbracket$ with $s_{1}\geq r_{1}+r_{2}+2$, $n\geq s_{1}+s_{2}$, we have  
\begin{equation}\label{indepDescents}
\Big(\#D\big(\sigma_{\emptyset,\emptyset}(n)\big)\cap R,\# D\big(\sigma_{\emptyset,\emptyset}(n)\big)\cap S\Big)\overset{\law}{=} (T_{1}, T_{2}),
\end{equation}
with $T_{1}\overset{\law}{=} \#D\big(\sigma_{\emptyset,\emptyset}(r_{2}+1)\big)$, $T_{2}\overset{\law}{=}\# D\big(\sigma_{\emptyset,\emptyset}(s_{2}+1)\big)$, and $T_{1}$ and $T_{2}$ are independent. Moreover, the number of permutations of $n$ with $k$ descents is given by the Eulerian number $A_{k}^{n}$, whose asymptotic value yields the following lemma.
\begin{lem}\cite[Proposition IX.9]{flajolet2009analytic}\label{convergeLawDescent}
Let $\sigma_{n}$ be a uniformly random permutation in $ S_{n}$. For $n$ going to infinity,
$$\frac{1}{\sqrt{n}}\left(\# D(\sigma_{n})-\frac{n}{2}\right)\xrightarrow[\law]{}\frac{1}{2\sqrt{3}}N,$$
where $N$ denotes a Gaussian variable with mean $0$ and variance $1/12$.
\end{lem}
Remark that for $1\leq r_{1}<r_{2}\leq n-1$, 
\begin{align*}
\sqrt{n-1}\left(f_{\sigma_{\emptyset,\emptyset}(n)}\left(\frac{r_{2}}{n-1}\right)-f_{\sigma_{\emptyset,\emptyset}(n)}\left(\frac{r_{1}}{n-1}\right)\right)&\\=\frac{1}{\sqrt{n-1}}(Y_{r_{2}}-Y_{r_{1}})
\overset{\law}{=}\frac{1}{\sqrt{n-1}}\big((r_{2}-r_{1})&-2\#D(\sigma_{r_{2}-r_{1}+1})\big),
\end{align*}
where $\sigma_{r_{2}-r_{1}+1}$ denotes a uniformly random permutation in $S_{r_{2}-r_{1}+1}$. Thus, \cref{convergeLawDescent} together with \eqref{indepDescents} yield the convergence of the marginal distributions  of $\sqrt{n}f_{\sigma_{\emptyset,\emptyset}(n)}$ towards the ones of $\frac{1}{\sqrt{3}}\mathcal{B}$. We end now the proof of \cref{centralLimit} with a tightness argument. We follow Theorem 8.4 of the first version of the book \cite{billingsley2009convergence} of Billingsley.
\begin{thm}\cite[Theorem 8.4]{billingsley2009convergence}\label{theoremBillingsley}
Let $(X_{i})_{i\in\mathbb{N}}$ be a real random process. Let $g_{n}:[0,1]\rightarrow \mathbb{R}$ be the piecewise linear function such that
 $$g_{n}\left(\frac{i}{n}\right)=\frac{1}{\sqrt{n}}X_{i}, \quad 0\leq i\leq n.$$
 Suppose that for all $\epsilon>0$, there exists $\lambda>1$ and $n_{0}\geq 0$ such that for all $k\in\mathbb{N}$ and $n\geq n_{0}$,
 $$\mathbb{P}\big(\max_{i\leq n} \vert X_{k+i}-X_{k}\vert\geq \lambda\sqrt{n}\big)\leq \epsilon/\lambda^{2} .$$
 Then, the sequence $g_{n}$ is tight.
 \end{thm}
 The hypothesis of \cref{theoremBillingsley} is verified in our case through the following lemma, which mimics the situation of a simple random walk. In the following statement, $F_{U}$ denotes the cumulative distribution function of a random variable $U$.
\begin{lem}\label{propertyBillingsley}
Set $K_{n}=\sup_{\llbracket 0,n\rrbracket}Y_{n}$. Then, 
$$F_{K_{n}}(t)\geq  F_{\vert Y_{n}\vert}(t-1)$$ 
for all $t\in \mathbb{R}$.
\end{lem}
\begin{proof}
Let $a$ and $b$ be two integers such that $b\leq a-2$. We first prove that 
$$\mathbb{P}(K_{n}\geq a, Y_{n}\leq b)\leq \mathbb{P}(Y_{n}\geq 2a-b-2).$$
Note first that $T=\inf(u\in\mathbb{N},Y_{u}= a)$ is a stopping time for the Markov chain $(Y_{n},X_{n})$. Since $\lbrace K_{n}\geq a\rbrace=\lbrace T\leq n\rbrace$, $\lbrace K_{n}\geq a\rbrace\in\mathcal{F}_{T}$ and by the strong Markov property,
\begin{align*}
\mathbb{P}(K_{n}\geq a, Y_{n}\leq b)=&\mathbb{P}\big((T\leq n)\cap (Y_{n}-Y_{T}\leq b-a)\big)\\
=&\mathbb{E}\big(\mathbf{1}_{T\leq n}\mathbb{P}_{(Y_{T},X_{T})}(\tilde{Y}_{n-T}-\tilde{Y}_{0}\leq b-a)\big)\\
\leq &\mathbb{E}\big(\mathbf{1}_{T\leq n}\mathbb{P}_{(Y_{T},X_{T})}(\tilde{Y}_{n-T}-\tilde{Y}_{1}\leq b-a+1)\big),
\end{align*}
with $(\tilde{Y}_{i},\tilde{X}_{i})_{i\geq 0}$ being an independent random walk starting at $(\tilde{Y}_{0},\tilde{X}_{0})=(Y_{T},X_{T})$.
Since $\tilde{Y}_{n-T}-\tilde{Y}_{1}$ is independent of the value $(\tilde{Y}_{0},\tilde{X}_{0})$ and distributed as $\tilde{Y}_{n-T-1}$, a symmetric random variable,
$$\tilde{Y}_{n-T}-\tilde{Y}_{1}\overset{\law}{=}-(\tilde{Y}_{n-T}-\tilde{Y}_{1}).$$ 
Hence,
\begin{align*}
\mathbb{E}\big(\mathbf{1}_{T\leq n}\mathbb{P}_{(Y_{T},X_{T})}(\tilde{Y}_{n-T}-\tilde{Y}_{1}\leq &b-a+1)\big)\\
=&\mathbb{E}(\mathbf{1}_{T\leq n}\mathbb{P}_{(Y_{T},X_{T})}(-(\tilde{Y}_{n-T}-\tilde{Y}_{1})\leq b-a+1))\\
=&\mathbb{E}\big(\mathbf{1}_{T\leq n}\mathbb{P}_{(Y_{T},X_{T})}(\tilde{Y}_{n-T}\geq a-(b+1)+\tilde{Y}_{1})\big)\\
\leq &\mathbb{E}\big(\mathbf{1}_{T\leq n}\mathbb{P}_{(Y_{T},X_{T})}(\tilde{Y}_{n-T}\geq a-(b+1)-1)\big)\\
\leq &\mathbb{P}\big((T\leq n)\cap(Y_{n}\geq 2a-b-2)\big).
\end{align*}
Since $b\leq a-2$, $(Y_{n}\geq 2a-b-2)\subset (T\leq n)$ and the above inequalities yield
\begin{align}
 \mathbb{P}(K_{n}\geq a, Y_{n}\leq b)\leq& \mathbb{E}\big(\mathbf{1}_{T\leq n}\mathbb{P}_{(Y_{T},X_{T})}(\tilde{Y}_{n-T}-\tilde{Y}_{1}\leq b-a+1)\big)\label{firstPartKn}\\
 \leq &\mathbb{P}(Y_{n}\geq 2a-b-2).\nonumber
 \end{align}
Thus, for $a\in\mathbb{N}$,
\begin{align*}
\mathbb{P}(K_{n}\geq a)=&\mathbb{P}\big((K_{n}\geq a)\cap (Y_{n}\leq a-2)\big)+\mathbb{P}\big((K_{n}\geq a)\cap (Y_{n}\geq a-1)\big)\\
\leq& \mathbb{P}(Y_{n}\geq a)+\mathbb{P}(Y_{n}\geq a-1 )\leq \mathbb{P}(\vert Y_{n}\vert \geq a-1),
\end{align*}
the last inequality being due to the fact that $Y_{n}$ is a symmetric random variable. This yields
$$F_{K_{n}}(u)\geq F_{\vert Y_{n}\vert}(u-1)$$
for $u$ integer. Since both cumulative distribution functions are constant between two consecutive integers, the assertion of the lemma is proved for all $t\in\mathbb{R}$. 
\end{proof}
Let us conclude now the proof of \cref{centralLimit}. Let $\epsilon>0$ and let $\lambda>1$ be such that $\mathbb{P}\big(N\geq \lambda-1\big)\leq \frac{\epsilon}{4\lambda^{2}}$, where $N$ is a gaussian variable of mean $0$ and variance $1/3$. Such $\lambda$ exists because $\mathbb{P}(N\geq \lambda)$ decays faster than exponentially. Since  $Y_{k+n}-Y_{k}$ is distributed as $Y_{n}$ and $\frac{1}{\sqrt{n}}Y_{n}$ converges in distribution to $N$ as $n$ goes to infinity, there exists $n_{0}$ such that for $n\geq n_{0}$ and $k\geq 0$,
$$\mathbb{P}\big(\vert Y_{k+n}-Y_{k}\vert\geq \lambda\sqrt{n}-1\big)\leq \mathbb{P}(\vert N\vert \geq \lambda-1)+\frac{\epsilon}{2\lambda^{2}}.$$
Hence, for $k\geq 0$ and $n\geq n_{0}$, \cref{propertyBillingsley} and the choice of $\lambda$ yield that
$$\mathbb{P}\big(\max_{i\leq n} \vert Y_{k+i}-Y_{k}\vert\geq \lambda\sqrt{n}\big)\leq 2\mathbb{P}\big(N\geq \lambda-1\big)+\frac{\epsilon}{2\lambda^{2}}
\leq\frac{\epsilon}{\lambda^{2}}.$$
Therefore, we can apply \cref{theoremBillingsley} to get the tightness of the sequence $\big(\sqrt{n}f_{\sigma_{\emptyset,\emptyset}(n)}\big)_{n\geq 1}$. Since we already now that the finite dimensional marginal converges to the ones of $\frac{1}{\sqrt{3}}\mathcal{B}$, the convergence in distribution of the sequence of random variables $\big(\sqrt{n}f_{\sigma_{\emptyset,\emptyset}(n)}\big)_{n\geq 1}$ follows.
\end{proof}

A direct consequence of \cref{centralLimit} is that almost surely, $f_{\sigma_{(\emptyset,\emptyset)}(n)}$ converges uniformly to $0$ as $n$ goes to infinity. 
\begin{proof}{Proof of \cref{lawLageNumbers}.}
Let $U=(U_{\uparrow},U_{\downarrow})\in \mathcal{U}^{(2)}$. Since $f_{(U_{\uparrow},U_{\downarrow})}$ and $f_{\sigma_{U}(n)}$ are $1-$Lipschitz, it suffices to show the almost sure pointwise convergence on $\mathbb{Q}\cap [0,1]$.

By the second assertion of \cref{martinEntranceBoundary}, $w\big[\sigma_{U}(n)\big]$ converges almost surely to $\Psi(U)$ in $\hat{Z}$, thus \cref{mainresult1} yields that $U\big(w[\sigma_{U}(n)]\big)$ converges almost surely to $U$ in $\mathcal{U}^{(2)}$. In particular, for any connected component $I$ of $U_{\downarrow}$ or $U_{\uparrow}$, and $x,y\in I$, 
$$\big\vert \big(f_{\sigma_{U}(n)}(y)-f_{\sigma_{U}(n)}(x)\big)-\big(f_{(U_{\uparrow},U_{\downarrow})}(y)-f_{(U_{\uparrow},U_{\downarrow})}(x)\big)\big\vert \xrightarrow[n\rightarrow \infty]{} 0.$$ 
Let $I$ be a connected component of $[0,1]\setminus U$ and let $x,y\in I$. Denote by $d(n)$ the random variable $\#\lbrace 1\leq i\leq n\vert X_{i}\in]x,y[\rbrace$. Then, by the oriented paintbox construction, 
$$f_{\sigma_{U}(n)}(y)-f_{\sigma_{U}(n)}(x)\overset{\law}{=}\frac{d(n)}{n}\big(f_{\sigma_{\emptyset,\emptyset}(d(n))}(1)-f_{\sigma_{\emptyset,\emptyset}(d(n))}(0)\big),$$
where the random variable $\sigma_{\emptyset,\emptyset}(d(n))$ is independent of $d(n)$. By the law of large numbers, 
$\frac{d(n)}{n}$ converges almost surely to $y-x$ and thus, by \cref{centralLimit}, $\big(f_{\sigma_{\emptyset,\emptyset}(d(n))}(1)-f_{\sigma_{\emptyset,\emptyset}(d(n))}(0)\big)$ converges to $0$ as $n$ goes to infinity. Therefore, for $x,y\in I$, 
$$f_{\sigma_{U}(n)}(y)-f_{\sigma_{U}(n)}(x) \xrightarrow[n\rightarrow \infty]{} 0=f_{(U_{\uparrow},U_{\downarrow})}(y)-f_{(U_{\uparrow},U_{\downarrow})}(x).$$
Let $x\in \mathbb{Q}\cap]0,1[$ and $\epsilon>0$. There exist $r\geq 1$ and $I_{1},\ldots ,I_{r}$ interval components of either $U$ or $[0,1]\setminus U$ such that $\Leb\left(\bigcup_{i=1}^{r} I_{i}\cap [0,x]\right)\geq x-\epsilon$. Since $f_{(U_{\uparrow},U_{\downarrow})}$ and $f_{\sigma_{U}(n)}$ are $1-$Lipschitz, the previous convergence results yield the almost sure existence of $n_{x}\geq 1$ such that for $n\geq n_{x}$,
$$\left\vert f_{\sigma_{U}(n)}(x)-f_{(U_{\uparrow},U_{\downarrow})}(x)\right\vert\leq (r+2)\epsilon.$$
Thus, almost surely, for all $x\in \mathbb{Q}\cap [0,1]$, 
$$f_{\sigma_{U}(n)}(x)\xrightarrow[n\rightarrow \infty]{}f_{(U_{\uparrow},U_{\downarrow})}(x).$$
This concludes the proof of \cref{lawLageNumbers}.
\end{proof}

\section*{Acknowledgments}
I wish to thank Philippe Biane and Jean-Yves Thibon who suggested these problems, and gave me the material to understand the subject. This work greatly benefited from the discussions with the people of the combinatorics workshop of Jean-Yves Thibon at the University of Marne-la-Vallée, and the team of Roland Speicher at the University of Saarland.

I am also grateful to the referees for several comments which significantly improved the overall quality of the manuscript.

\section*{Appendix: Proof of \cref{convergencePaintbox}}
Let $k\geq 1$. For $n\geq 1$, we denote by $\mathbb{P}_{n}$ the distribution of $X^{n}_{k}$ and by $\mathbb{P}$ the distribution of $X_{k}$.
\begin{lem}\label{mainStepBis}
For each $\epsilon>0$, there exists $\mathcal{X}_{\epsilon}\in\mathcal{B}([0,1])$ and $n_{0}\geq 1$ such that 
\begin{itemize}
\item $\mathbb{P}(\mathcal{X}_{\epsilon})\geq 1-\epsilon$, and
\item for all $n\geq n_{0}$, $\sigma_{U_{n}}$ and $\sigma_{U}$ coincide on $\mathcal{X}_{\epsilon}$.
\end{itemize}
\end{lem}
\begin{proof}
Let $\epsilon>0$. For $\delta>0$, set 
$$\Delta_{\delta}:=\bigcup_{1\leq i<j\leq k}\left\lbrace (x_{1},\ldots,x_{k})\in [0,1]^{k}\big\vert \vert x_{i}-x_{j}\vert \leq \delta\right\rbrace.$$ Then, $\partial_{[0,1]^{k}} \Delta_{\delta}=\bigcup_{1\leq i,j\leq k}\left\lbrace (x_{1},\ldots,x_{k})\in [0,1]^{k} \big\vert\vert x_{i}-x_{j}\vert = \delta\right\rbrace$. Since the latter  has Lebesgue measure zero, $\mathbb{P}\left(X_{k}\in \partial_{[0,1]^{k}} \Delta_{\delta}\right)=0$. Since $\Delta_{\delta}$ is decreasing in $\delta$ and $\Leb\left(\bigcap_{\delta>0} \Delta_{\delta}\right)=0$, there exists $\delta>0$ such that $\mathbb{P}\left(X\in \Delta_{\delta}\right)\leq \epsilon/2$.

Denote by $\mathcal{U}=\left\lbrace U_{i}=]r_{i},s_{i}[\right\rbrace_{1\leq i\leq r}$ the finite ordered collection of interval components of $U_{\downarrow}\cup U_{\uparrow}$ of size larger than $\delta$, where we suppose that $s_{i}<r_{i+1}$ for $1\leq i\leq r-1$. For $\eta>0$, let $$B_{\eta}:=\bigcup_{\substack{1\leq j\leq k\\1\leq i\leq r}}\left\lbrace (x_{1},\ldots,x_{k})\in [0,1]^{k}\big\vert x_{j}\in ]r_{i}-\eta,r_{i}+\eta[\cup]s_{i}-\eta,s_{i}+\eta[\right\rbrace.$$
Once again, $\Leb\left(\partial_{[0,1]^{k}}B_{\eta}\right)=0$,  $B_{\eta}$ is decreasing in $\eta$ and  $\left(\bigcap_{\eta>0} B_{\eta}\right)$ is a null set, thus there exists $\eta>0$ such that $\mathbb{P}\left(X_{k}\in B_{\eta}\right)\leq \epsilon/2$. Let $K_{\epsilon}=B_{\eta}\cup \Delta_{\delta}$ and let $\mathcal{X}_{\epsilon}= \Delta\setminus K_{\epsilon}$. Then, $\mathbb{P}(\mathcal{X}_{\epsilon})\geq 1-\epsilon$. Moreover, since $\Leb(\partial\mathcal{X}_{\epsilon})=\Leb(\partial K_{\epsilon})=0$, $\mathbb{P}(\partial\mathcal{X}_{\epsilon})=0$.

Set $\kappa=\min(\eta,\delta)$ and let $n_{0}\geq 1$ be such that for $n\geq n_{0}$, $d_{\mathcal{U}^{(2)}}(U_{n},U)\leq \kappa$. Suppose from now on that $n\geq n_{0}$. Since $d_{\mathcal{U}^{(2)}}(U_{n},U)\leq \kappa\leq \delta$, the interval components of $U_{\downarrow}^{n}$ (resp.~$U_{\uparrow}^{n})$ of size larger than $\delta$ are in order respecting bijection with those of $U_{\downarrow}$ (resp.~$U_{\uparrow}$). Denote by $\mathcal{U}_{n}=\left\lbrace U_{i}^{n}:=]r_{i}^{n},s_{i}^{n}[\right\rbrace_{1\leq i\leq r}$ the interval components of $U_{n}$ of size larger than $\delta$: then, $U_{i}^{n}\subset U^{n}_{\uparrow}$ if and only if $U_{i}\subset U_{\uparrow}$. Moreover, since $d_{\mathcal{U}^{(2)}}(U_{n},U)\leq \kappa\leq \eta$, $\vert r_{i}^{n}-r_{i}\vert <\eta$ and $\vert s_{i}^{n}-s_{i}\vert <\eta$ for $1\leq i\leq r$.

We prove now that $\sigma_{U}$ and $\sigma_{U_{n}}$ coincide on $\mathcal{X}_{\epsilon}$ for $n\geq n_{0}$. Let $\vec{x}\in \mathcal{X}_{\epsilon}$ and $n\geq n_{0}$. We set $\sigma:=\sigma_{U}(\vec{x})$ and $\sigma_{n}:=\sigma_{U_{n}}(\vec{x})$. Let $1\leq i<j\leq k$ and suppose that $\sigma^{-1}(i)<\sigma^{-1}(j)$. By the oriented paintbox construction, this happens if and only if 
\begin{enumerate}
\item either $x_{i}$ and $x_{j}$ are not in the same connected component of $U_{\uparrow}\cup U_{\downarrow}$ and $x_{i}<x_{j}$, or
\item $x_{i}$ and $x_{j}$ are in the same connected component of $U_{\uparrow}$. 
\end{enumerate}
Let us suppose that we are in the case $(1)$. Since $\vec{x}\in \mathcal{X}_{\epsilon}$, $\vert x_{i}-x_{j}\vert >\delta$. Thus, $x_{i}$ and $x_{j}$ can not be in a same connected component of $U_{n}$ of size smaller than $\delta$. Suppose that $x_{i}$ is in a connected component $U_{i}^{n}=]r_{i}^{n},s_{i}^{n}[$ of size larger than $\delta$, with $1\leq i\leq r$. Since $d_{\mathcal{U}^{(2)}}(U,U_{n})\leq \eta$, $\vert r_{i}-r_{i}^{n}\vert \leq \eta$ and $\vert s_{i}-s_{i}^{n}\vert \leq \eta$. Hence, $x_{i}\in ]r_{i}-\eta,s_{i}+\eta[$. Since $\vec{x}\in \mathcal{X}_{\epsilon}$, $\vert x_{i}-r_{i}\vert >\eta$ and $\vert x_{i}-s_{i}\vert >\eta$, and thus $x_{i}\in ]r_{i}+\eta,s_{i}-\eta[\subset U_{i}$. Since we are in the case $(1)$, this means that $x_{j}\not \in U_{i}$. Similarly, the fact that $\vec{x}\in\mathcal{X}_{\epsilon}$ yields that $x_{j}\not \in ]r_{i}-\eta,s_{i}+\eta[$, and then the fact that $d_{\mathcal{U}^{(2)}}(U,U_{n})\leq \eta$ implies that $x_{j}\not \in ]r_{i}^{n},s_{i}^{n}[=U_{i}^{n}\subset U_{\uparrow}^{n}$. We have proven that $x_{i}$ and $x_{j}$ are not in the same connected component of $U_{n}$, and since $x_{i}<x_{j}$ we have $\sigma_{n}^{-1}(i)<\sigma_{n}^{-1}(j)$.

Let us suppose that we are in case $(2)$. Then, $x_{i}$ and $x_{j}$ are in the same connected component $U_{i}\subset U_{\uparrow}$. Since $\vec{x}\in\mathcal{X}_{\epsilon}$, this implies that $x_{i}$ and $x_{j}$ are in $]r_{i}+\eta,s_{i}-\eta[$. Since $d_{\mathcal{U}^{(2)}}(U,U_{n})\leq \eta$, we thus have that $x_{i}$ and $x_{j}$ are in $]r_{i}^{n},s_{i}^{n}[=U_{i}^{n}$. Thus, we have also $\sigma_{n}^{-1}(i)<\sigma_{n}^{-1}(j)$. The case $\sigma^{-1}(i)>\sigma^{-1}(j)$ is done similarly.

Finally, $\sigma=\sigma_{n}$ and we have proven that for $n\geq n_{0}$, $\sigma_{U_{n}}$ and $\sigma_{U}$ are equal on $\mathcal{X}_{\epsilon}$. 
\end{proof}

\begin{proof}[Proof of \cref{convergencePaintbox}]
Let $\epsilon>0$. Let $\mathcal{X}_{\epsilon}\in\mathcal{B}([0,1])$ and $n_{0}\geq 1$ be given by \cref{mainStepBis} for $\epsilon$, and let $\sigma\in S_{k}$. Then,
\begin{equation}\label{conditXepsilon}
\mathbb{P}_{n}(\sigma_{U_{n}}(X_{k}^{n})=\sigma)\geq \mathbb{P}_{n}\big(\lbrace\sigma_{U_{n}}(X_{k}^{n})=\sigma\rbrace \cap \mathcal{X}_{\epsilon}).
\end{equation}
By \cref{mainStepBis}, for $n\geq n_{0}$, $\sigma_{U_{n}}$ and $\sigma_{U}$ coincide on $\mathcal{X}_{\epsilon}$, which yields
\begin{equation}\label{indepN}
\mathbb{P}_{n}\big(\lbrace\sigma_{U_{n}}(X_{k}^{n})=\sigma\rbrace \cap \mathcal{X}_{\epsilon}\big)=\mathbb{P}_{n}\big(\lbrace\sigma_{U}(X_{k}^{n})=\sigma\rbrace \cap \mathcal{X}_{\epsilon})=\mathbb{P}_{n}(A_{\sigma,\epsilon}),
\end{equation}
where $A_{\sigma,\epsilon}=\sigma_{U}^{-1}(\lbrace \sigma\rbrace)\cap \mathcal{X}_{\epsilon}$. Since $A_{\sigma,\epsilon}$ is independent of $n$ and $\mathbb{P}(\partial A_{\sigma,\epsilon})\leq \mathbb{P}(\partial \mathcal{X}_{\epsilon})=0$, there exists $n_{\sigma}\geq n_{0}$ such that for $n\geq n_{\sigma}$,
\begin{equation}\label{converProba}
\mathbb{P}_{n}(A_{\sigma,\epsilon})\geq \mathbb{P}(A_{\sigma,\epsilon})-\epsilon.
\end{equation}
On the one hand, using \eqref{converProba} in \eqref{indepN} yields that 
\begin{equation}\label{minorPnP}
\mathbb{P}_{n}\big(\lbrace\sigma_{U_{n}}(X_{k}^{n})=\sigma\rbrace \cap \mathcal{X}_{\epsilon}\big)\geq \mathbb{P}(A_{\sigma,\epsilon})-\epsilon
\end{equation}
for $n\geq n_{\sigma}$. On the other hand, since $\mathbb{P}(\mathcal{X}_{\epsilon})\geq 1-\epsilon$ and $A_{\sigma,\epsilon}=\sigma_{U}^{-1}(\lbrace \sigma\rbrace)\cap \mathcal{X}_{\epsilon}$, 
$$\mathbb{P}(A_{\sigma,\epsilon})\geq \mathbb{P}(\sigma_{U}(X_{k})=\sigma)-\epsilon.$$
Using the latter inequality in \eqref{minorPnP} yields that
$$\mathbb{P}_{n}\big(\lbrace\sigma_{U_{n}}(X_{k}^{n})=\sigma\rbrace \cap \mathcal{X}_{\epsilon}\big)\geq \mathbb{P}(\sigma_{U}(X_{k})=\sigma)-2\epsilon,$$
which combined with \eqref{conditXepsilon} gives
$$\mathbb{P}_{n}(\sigma_{U_{n}}(X_{k}^{n})=\sigma)\geq \mathbb{P}(\sigma_{U}(X_{k})=\sigma)-2\epsilon.$$
Let $n_{1}=\max_{\sigma\in S_{k}}(n_{\sigma})$. Then, for $n\geq n_{1}$ and $\sigma\in S_{k}$,
\begin{align*}
\mathbb{P}_{n}(\sigma_{U_{n}}(X_{k}^{n})=\sigma)=&1-\sum_{\substack{\sigma'\in S_{k}\\\sigma'\not=\sigma}}\mathbb{P}_{n}(\sigma_{U_{n}}(X_{k}^{n})=\sigma')\\
\leq &1-\sum_{\substack{\sigma'\in S_{k}\\\sigma'\not=\sigma}}\left(\mathbb{P}\big(\sigma_{U}(X_{k})=\sigma'\big)-2\epsilon \right)\\
\leq &\mathbb{P}(\sigma_{U}(X_{k})=\sigma)+2\epsilon k!.
\end{align*}
To summarize, we have found $n_{1}\geq 1$ such that for all $\sigma\in S_{k}$ and $n\geq n_{1}$,
$$\mathbb{P}(\sigma_{U}(X_{k})=\sigma)-2\epsilon\leq \mathbb{P}_{n}(\sigma_{U_{n}}(X_{k}^{n})=\sigma)\leq \mathbb{P}(\sigma_{U}(X_{k})=\sigma)+2\epsilon k!.$$
This implies the convergence in law of $\sigma_{U_{n}}(X_{k}^{n})$ towards $\sigma_{U}(X_{k})$.
\end{proof}

\bibliographystyle{imsart-number}
\bibliography{descent2}

\begin{thebibliography}{37}

\bibitem{andre1881permutations}
\begin{barticle}[author]
\bauthor{\bsnm{Andr\'e},~\bfnm{D.}\binits{D.}}
(\byear{1879}).
\btitle{D\'etermination du nombre des arrangements complets o\`u les
  \'el\'ements cons\'ecutifs satisfont \`a des conditions donn\'ees}.
\bjournal{Bull. Soc. Math. France}
\bvolume{7}
\bpages{43--63}.
\bmrnumber{1503801}
\end{barticle}
\endbibitem

\bibitem{barbour2008small}
\begin{barticle}[author]
\bauthor{\bsnm{Barbour},~\bfnm{A.~D.}\binits{A.~D.}} \AND
  \bauthor{\bsnm{Gnedin},~\bfnm{A.~V.}\binits{A.~V.}}
(\byear{2009}).
\btitle{Small counts in the infinite occupancy scheme}.
\bjournal{Electron. J. Probab.}
\bvolume{14}
\bpages{no. 13, 365--384}.
\bdoi{10.1214/EJP.v14-608}
\bmrnumber{2480545}
\end{barticle}
\endbibitem

\bibitem{bender2004asymptotics}
\begin{barticle}[author]
\bauthor{\bsnm{Bender},~\bfnm{Edward~A.}\binits{E.~A.}},
  \bauthor{\bsnm{Helton},~\bfnm{William~J.}\binits{W.~J.}} \AND
  \bauthor{\bsnm{Richmond},~\bfnm{L.~Bruce}\binits{L.~B.}}
(\byear{2003}).
\btitle{Asymptotics of permutations with nearly periodic patterns of rises and
  falls}.
\bjournal{Electron. J. Combin.}
\bvolume{10}
\bpages{Research Paper 40, 27}.
\bmrnumber{2014527}
\end{barticle}
\endbibitem

\bibitem{billingsley2009convergence}
\begin{bbook}[author]
\bauthor{\bsnm{Billingsley},~\bfnm{Patrick}\binits{P.}}
(\byear{1968}).
\btitle{Convergence of probability measures}.
\bpublisher{John Wiley \& Sons, Inc., New York-London-Sydney}.
\bmrnumber{0233396}
\end{bbook}
\endbibitem

\bibitem{YoungBouquet}
\begin{barticle}[author]
\bauthor{\bsnm{Borodin},~\bfnm{Alexei}\binits{A.}} \AND
  \bauthor{\bsnm{Olshanski},~\bfnm{Grigori}\binits{G.}}
(\byear{2013}).
\btitle{The {Y}oung bouquet and its boundary}.
\bjournal{Mosc. Math. J.}
\bvolume{13}
\bpages{193--232, 363}.
\bmrnumber{3134905}
\end{barticle}
\endbibitem

\bibitem{de1970permutations}
\begin{barticle}[author]
\bauthor{\bparticle{de} \bsnm{Bruijn},~\bfnm{N.~G.}\binits{N.~G.}}
(\byear{1970}).
\btitle{Permutations with given ups and downs}.
\bjournal{Nieuw Arch. Wisk. (3)}
\bvolume{18}
\bpages{61--65}.
\bmrnumber{0256897}
\end{barticle}
\endbibitem

\bibitem{doob1958discrete}
\begin{bbook}[author]
\bauthor{\bsnm{Doob},~\bfnm{J.~L.}\binits{J.~L.}}
(\byear{1959}).
\btitle{Discrete potential theory and boundaries}
\bvolume{8}.
\bmrnumber{0107098}
\end{bbook}
\endbibitem

\bibitem{duchamp2002noncommutative}
\begin{barticle}[author]
\bauthor{\bsnm{Duchamp},~\bfnm{G\'erard}\binits{G.}},
  \bauthor{\bsnm{Hivert},~\bfnm{Florent}\binits{F.}} \AND
  \bauthor{\bsnm{Thibon},~\bfnm{Jean-Yves}\binits{J.-Y.}}
(\byear{2002}).
\btitle{Noncommutative symmetric functions. {VI}. {F}ree quasi-symmetric
  functions and related algebras}.
\bjournal{Internat. J. Algebra Comput.}
\bvolume{12}
\bpages{671--717}.
\bdoi{10.1142/S0218196702001139}
\bmrnumber{1935570}
\end{barticle}
\endbibitem

\bibitem{ehrenborg2002asymptotics}
\begin{barticle}[author]
\bauthor{\bsnm{Ehrenborg},~\bfnm{Richard}\binits{R.}}
(\byear{2002}).
\btitle{The asymptotics of almost alternating permutations}.
\bjournal{Adv. in Appl. Math.}
\bvolume{28}
\bpages{421--437}.
\bnote{Special issue in memory of Rodica Simion}.
\bdoi{10.1006/aama.2001.0790}
\bmrnumber{1900003}
\end{barticle}
\endbibitem

\bibitem{ehrenborg2002probabilistic}
\begin{barticle}[author]
\bauthor{\bsnm{Ehrenborg},~\bfnm{Richard}\binits{R.}},
  \bauthor{\bsnm{Levin},~\bfnm{Michael}\binits{M.}} \AND
  \bauthor{\bsnm{Readdy},~\bfnm{Margaret~A.}\binits{M.~A.}}
(\byear{2002}).
\btitle{A probabilistic approach to the descent statistic}.
\bjournal{J. Combin. Theory Ser. A}
\bvolume{98}
\bpages{150--162}.
\bdoi{10.1006/jcta.2001.3233}
\bmrnumber{1897930}
\end{barticle}
\endbibitem

\bibitem{flajolet2009analytic}
\begin{barticle}[author]
\bauthor{\bsnm{Flajolet},~\bfnm{Philippe}\binits{P.}} \AND
  \bauthor{\bsnm{Sedgewick},~\bfnm{Robert}\binits{R.}}
(\byear{2009}).
\btitle{Analytic combinatorics}.
\bpages{xiv+810}.
\bdoi{10.1017/CBO9780511801655}
\bmrnumber{2483235}
\end{barticle}
\endbibitem

\bibitem{gessel1984multipartite}
\begin{barticle}[author]
\bauthor{\bsnm{Gessel},~\bfnm{Ira~M.}\binits{I.~M.}}
(\byear{1984}).
\btitle{Multipartite {$P$}-partitions and inner products of skew {S}chur
  functions}.
\bvolume{34}
\bpages{289--317}.
\bdoi{10.1090/conm/034/777705}
\bmrnumber{777705}
\end{barticle}
\endbibitem

\bibitem{gnedin2006coherent}
\begin{barticle}[author]
\bauthor{\bsnm{Gnedin},~\bfnm{Alexander}\binits{A.}} \AND
  \bauthor{\bsnm{Olshanski},~\bfnm{Grigori}\binits{G.}}
(\byear{2006}).
\btitle{Coherent permutations with descent statistic and the boundary problem
  for the graph of zigzag diagrams}.
\bjournal{Int. Math. Res. Not.}
\bpages{Art. ID 51968, 39}.
\bdoi{10.1155/IMRN/2006/51968}
\bmrnumber{2211157}
\end{barticle}
\endbibitem

\bibitem{gnedin1997representation}
\begin{barticle}[author]
\bauthor{\bsnm{Gnedin},~\bfnm{Alexander~V.}\binits{A.~V.}}
(\byear{1997}).
\btitle{The representation of composition structures}.
\bjournal{Ann. Probab.}
\bvolume{25}
\bpages{1437--1450}.
\bdoi{10.1214/aop/1024404519}
\bmrnumber{1457625}
\end{barticle}
\endbibitem

\bibitem{GnedinRegeneration}
\begin{barticle}[author]
\bauthor{\bsnm{Gnedin},~\bfnm{Alexander~V.}\binits{A.~V.}}
(\byear{2010}).
\btitle{Regeneration in random combinatorial structures}.
\bjournal{Probab. Surv.}
\bvolume{7}
\bpages{105--156}.
\bdoi{10.1214/10-PS163}
\bmrnumber{2684164}
\end{barticle}
\endbibitem

\bibitem{Goodearl}
\begin{barticle}[author]
\bauthor{\bsnm{Goodearl},~\bfnm{K.~R.}\binits{K.~R.}}
(\byear{1986}).
\btitle{Partially ordered abelian groups with interpolation}.
\bvolume{20}
\bpages{xxii+336}.
\bmrnumber{845783}
\end{barticle}
\endbibitem

\bibitem{jacka2007random}
\begin{barticle}[author]
\bauthor{\bsnm{Jacka},~\bfnm{Saul}\binits{S.}} \AND
  \bauthor{\bsnm{Warren},~\bfnm{Jon}\binits{J.}}
(\byear{2007}).
\btitle{Random orderings of the integers and card shuffling}.
\bjournal{Stochastic Process. Appl.}
\bvolume{117}
\bpages{708--719}.
\bdoi{10.1016/j.spa.2006.10.001}
\bmrnumber{2327835}
\end{barticle}
\endbibitem

\bibitem{kallenberg2006foundations}
\begin{barticle}[author]
\bauthor{\bsnm{Kallenberg},~\bfnm{Olav}\binits{O.}}
(\byear{2002}).
\btitle{Foundations of modern probability}.
\bpages{xx+638}.
\bdoi{10.1007/978-1-4757-4015-8}
\bmrnumber{1876169}
\end{barticle}
\endbibitem

\bibitem{kerov1996boundary}
\begin{barticle}[author]
\bauthor{\bsnm{Kerov},~\bfnm{S.}\binits{S.}}
(\byear{1996}).
\btitle{The boundary of {Y}oung lattice and random {Y}oung tableaux}.
\bvolume{24}
\bpages{133--158}.
\bmrnumber{1363510}
\end{barticle}
\endbibitem

\bibitem{kerov1998boundary}
\begin{barticle}[author]
\bauthor{\bsnm{Kerov},~\bfnm{Sergei}\binits{S.}},
  \bauthor{\bsnm{Okounkov},~\bfnm{Andrei}\binits{A.}} \AND
  \bauthor{\bsnm{Olshanski},~\bfnm{Grigori}\binits{G.}}
(\byear{1998}).
\btitle{The boundary of the {Y}oung graph with {J}ack edge multiplicities}.
\bjournal{Internat. Math. Res. Notices}
\bvolume{4}
\bpages{173--199}.
\bdoi{10.1155/S1073792898000154}
\bmrnumber{1609628}
\end{barticle}
\endbibitem

\bibitem{kerov2003asymptotic}
\begin{bbook}[author]
\bauthor{\bsnm{Kerov},~\bfnm{S.~V.}\binits{S.~V.}}
(\byear{2003}).
\btitle{Asymptotic representation theory of the symmetric group and its
  applications in analysis}.
\bseries{Translations of Mathematical Monographs}
\bvolume{219}.
\bpublisher{American Mathematical Society, Providence, RI}
\bnote{Translated from the Russian manuscript by N. V. Tsilevich, With a
  foreword by A. Vershik and comments by G. Olshanski}.
\bmrnumber{1984868}
\end{bbook}
\endbibitem

\bibitem{kerov1986characters}
\begin{barticle}[author]
\bauthor{\bsnm{Kerov},~\bfnm{Sergei~V.}\binits{S.~V.}} \AND
  \bauthor{\bsnm{Vershik},~\bfnm{Anatol~M.}\binits{A.~M.}}
(\byear{1986}).
\btitle{The characters of the infinite symmetric group and probability
  properties of the {R}obinson-{S}chensted-{K}nuth algorithm}.
\bjournal{SIAM J. Algebraic Discrete Methods}
\bvolume{7}
\bpages{116--124}.
\bdoi{10.1137/0607014}
\bmrnumber{819713}
\end{barticle}
\endbibitem

\bibitem{kingman1978representation}
\begin{barticle}[author]
\bauthor{\bsnm{Kingman},~\bfnm{J.~F.~C.}\binits{J.~F.~C.}}
(\byear{1978}).
\btitle{The representation of partition structures}.
\bjournal{J. London Math. Soc. (2)}
\bvolume{18}
\bpages{374--380}.
\bdoi{10.1112/jlms/s2-18.2.374}
\bmrnumber{509954}
\end{barticle}
\endbibitem

\bibitem{knuth1970permutations}
\begin{barticle}[author]
\bauthor{\bsnm{Knuth},~\bfnm{Donald~E.}\binits{D.~E.}}
(\byear{1970}).
\btitle{Permutations, matrices, and generalized {Y}oung tableaux}.
\bjournal{Pacific J. Math.}
\bvolume{34}
\bpages{709--727}.
\bmrnumber{0272654}
\end{barticle}
\endbibitem

\bibitem{macdonald1998symmetric}
\begin{barticle}[author]
\bauthor{\bsnm{Macdonald},~\bfnm{I.~G.}\binits{I.~G.}}
(\byear{1998}).
\btitle{Symmetric functions and orthogonal polynomials}.
\bvolume{12}
\bpages{xvi+53}.
\bnote{Dean Jacqueline B. Lewis Memorial Lectures presented at Rutgers
  University, New Brunswick, NJ}.
\bmrnumber{1488699}
\end{barticle}
\endbibitem

\bibitem{niven1968combinatorial}
\begin{barticle}[author]
\bauthor{\bsnm{Niven},~\bfnm{Ivan}\binits{I.}}
(\byear{1968}).
\btitle{A combinatorial problem of finite sequences}.
\bjournal{Nieuw Arch. Wisk. (3)}
\bvolume{16}
\bpages{116--123}.
\bmrnumber{0234846}
\end{barticle}
\endbibitem

\bibitem{oshanin2004random}
\begin{barticle}[author]
\bauthor{\bsnm{Oshanin},~\bfnm{G.}\binits{G.}} \AND
  \bauthor{\bsnm{Voituriez},~\bfnm{R.}\binits{R.}}
(\byear{2004}).
\btitle{Random walk generated by random permutations of
  {$\{1,2,3,\dots,n+1\}$}}.
\bjournal{J. Phys. A}
\bvolume{37}
\bpages{6221--6241}.
\bdoi{10.1088/0305-4470/37/24/002}
\bmrnumber{2073602}
\end{barticle}
\endbibitem

\bibitem{zbMATH03030977}
\begin{barticle}[author]
\bauthor{\bsnm{Robinson},~\bfnm{G.~de~B.}\binits{G.~d.~B.}}
(\byear{1938}).
\btitle{On the {R}epresentations of the {S}ymmetric {G}roup}.
\bjournal{Amer. J. Math.}
\bvolume{60}
\bpages{745--760}.
\bdoi{10.2307/2371609}
\bmrnumber{1507943}
\end{barticle}
\endbibitem

\bibitem{schensted1961longest}
\begin{barticle}[author]
\bauthor{\bsnm{Schensted},~\bfnm{C.}\binits{C.}}
(\byear{1961}).
\btitle{Longest increasing and decreasing subsequences}.
\bjournal{Canad. J. Math.}
\bvolume{13}
\bpages{179--191}.
\bdoi{10.4153/CJM-1961-015-3}
\bmrnumber{0121305}
\end{barticle}
\endbibitem

\bibitem{sniady2014robinson}
\begin{barticle}[author]
\bauthor{\bsnm{\'Sniady},~\bfnm{Piotr}\binits{P.}}
(\byear{2014}).
\btitle{Robinson-{S}chensted-{K}nuth algorithm, jeu de taquin, and
  {K}erov-{V}ershik measures on infinite tableaux}.
\bjournal{SIAM J. Discrete Math.}
\bvolume{28}
\bpages{598--630}.
\bdoi{10.1137/130930169}
\bmrnumber{3190750}
\end{barticle}
\endbibitem

\bibitem{stanley2011enumerative}
\begin{bbook}[author]
\bauthor{\bsnm{Stanley},~\bfnm{Richard~P.}\binits{R.~P.}}
(\byear{2012}).
\btitle{Enumerative combinatorics. {V}olume 1},
\bedition{second} ed.
\bseries{Cambridge Studies in Advanced Mathematics}
\bvolume{49}.
\bpublisher{Cambridge University Press, Cambridge}.
\bmrnumber{2868112}
\end{bbook}
\endbibitem

\bibitem{sawtooth}
\begin{barticle}[author]
\bauthor{\bsnm{Tarrago},~\bfnm{Pierre}\binits{P.}}
(\byear{2015}).
\btitle{Asymptotic independence in large random permutations with fixed descent
  set}.
\bjournal{Electron. J. Probab.}
\bvolume{20}
\bpages{no. 103, 33}.
\bdoi{10.1214/EJP.v20-4196}
\bmrnumber{3407220}
\end{barticle}
\endbibitem

\bibitem{thoma1964unzerlegbaren}
\begin{barticle}[author]
\bauthor{\bsnm{Thoma},~\bfnm{Elmar}\binits{E.}}
(\byear{1964}).
\btitle{Die unzerlegbaren, positiv-definiten {K}lassenfunktionen der
  abz\"ahlbar unendlichen, symmetrischen {G}ruppe}.
\bjournal{Math. Z.}
\bvolume{85}
\bpages{40--61}.
\bdoi{10.1007/BF01114877}
\bmrnumber{0173169}
\end{barticle}
\endbibitem

\bibitem{vershik1981asymptotic}
\begin{barticle}[author]
\bauthor{\bsnm{Vershik},~\bfnm{A.~M.}\binits{A.~M.}} \AND
  \bauthor{\bsnm{Kerov},~\bfnm{S.~V.}\binits{S.~V.}}
(\byear{1981}).
\btitle{Asymptotic theory of the characters of a symmetric group}.
\bjournal{Funktsional. Anal. i Prilozhen.}
\bvolume{15}
\bpages{15--27, 96}.
\bmrnumber{639197}
\end{barticle}
\endbibitem

\bibitem{viennot1979permutations}
\begin{barticle}[author]
\bauthor{\bsnm{Viennot},~\bfnm{G.}\binits{G.}}
(\byear{1979}).
\btitle{Permutations ayant une forme donn\'ee}.
\bjournal{Discrete Math.}
\bvolume{26}
\bpages{279--284}.
\bdoi{10.1016/0012-365X(79)90035-9}
\bmrnumber{535256}
\end{barticle}
\endbibitem

\bibitem{viennot1981equidistribution}
\begin{barticle}[author]
\bauthor{\bsnm{Viennot},~\bfnm{G\'erard}\binits{G.}}
(\byear{1981}).
\btitle{\'Equidistribution des permutations ayant une forme donn\'ee selon les
  avances et coavances}.
\bjournal{J. Combin. Theory Ser. A}
\bvolume{31}
\bpages{43--55}.
\bdoi{10.1016/0097-3165(81)90052-2}
\bmrnumber{626440}
\end{barticle}
\endbibitem

\bibitem{viennot1983maximal}
\begin{barticle}[author]
\bauthor{\bsnm{Viennot},~\bfnm{G\'erard}\binits{G.}}
(\byear{1983}).
\btitle{Maximal chains of subwords and up-down sequences of permutations}.
\bjournal{J. Combin. Theory Ser. A}
\bvolume{34}
\bpages{1--14}.
\bdoi{10.1016/0097-3165(83)90035-3}
\bmrnumber{685207}
\end{barticle}
\endbibitem

\end{thebibliography}
\end{document}